\definecolor{light-gray1}{gray}{0.90}
\definecolor{light-gray2}{gray}{0.80}
\definecolor{deepgreen}{cmyk}{1,0,1,0.5}
\newcommand{\E}{\mathcal{E}}
\newcommand{\LL}{\mathcal{L}}
\newcommand{\HH}{\mathcal{H}}
\newcommand{\M}{\mathcal{M}}
\newcommand{\Sp}{\mathbb{S}}
\newcommand{\CC}{\mathscr{C}}
\newcommand{\NN}{\mathcal{N}}
\newcommand{\K}{\mathcal{K}}
\newcommand{\N}{\mathbb{N}}
\newcommand{\R}{\mathbb{R}}
\newcommand{\Z}{\mathbb{Z}}
\newcommand{\al}{\alpha}
\newcommand{\be}{\beta}
\newcommand{\ga}{\gamma}
\newcommand{\de}{\delta}
\newcommand{\e}{\varepsilon}
\newcommand{\fy}{\varphi}
\newcommand{\la}{\lambda}
\newcommand{\s}{\sigma}
\newcommand{\p}{\partial}
\newcommand{\supp}{\operatorname{supp}}
\newcommand{\Rmnum}[1]{\expandafter\@slowromancap\romannumeral #1@}
\newcommand{\lec}{\lesssim}
\newcommand{\gec}{\gtrsim}
\newcommand{\I}{\infty}
\newcommand{\ti}{\widetilde}
\newcommand{\ang}[1]{\left\langle{#1}\right\rangle}
\newcommand{\abs}[1]{\left\lvert{#1}\right\rvert}
\newcommand{\ali}[1]{\begin{align}\begin{split} #1 \end{split}\end{align}}
\newcommand{\ant}[1]{\begin{align*}\begin{split} #1 \end{split}\end{align*}}
\newcommand{\EQ}[1]{\begin{equation}\begin{split} #1 \end{split}\end{equation}}
\newcommand{\Del}[1]{}
\newcommand{\pt}{&}
\newcommand{\pr}{\\ &}
\newcommand{\pq}{\quad}
\def\ti{\tilde}
\numberwithin{equation}{section}
\newtheorem{thm}{Theorem}[section]
\newtheorem{cor}[thm]{Corollary}
\newtheorem{lem}[thm]{Lemma}
\newtheorem{prop}[thm]{Proposition}
\newtheorem{claim}[thm]{Claim}
\theoremstyle{remark}
\newtheorem{rem}{Remark}
\newtheorem{defn}{Definition}
\newcommand{\mand}{{\ \ \text{and} \ \  }}
\newcommand{\mor}{{\ \ \text{or} \ \ }}
\newcommand{\mif}{{\ \ \text{if} \ \ }}
\newcommand{\mfor}{{\ \ \text{for} \ \ }}
\newcommand{\mas}{{\ \ \text{as} \ \ }}
 \def\Id{\mathrm{Id}}
 \def\nn{\nonumber}
\def\eps{\varepsilon}
\def\glei{\mathrm{eq}}
\begin{document}

\title[Equivariant Adkins-Nappi-Skyrme Wave Maps]{Conditional global existence and scattering for a semi-linear Skyrme equation with large data}
\author{Andrew Lawrie}
\begin{abstract}
We study a generalization of energy super-critical wave maps due to Adkins and Nappi that can also be viewed as a simplified version of the Skyrme model. These are maps from $1+3$ dimensional Minkowski space that take values in the $3$-sphere, and it follows that every finite energy Adkins-Nappi wave map has a fixed topological degree which is an integer. Here we initiate the study of the large data dynamics for Adkins-Nappi wave maps by proving that there is no type II blow-up in the class of maps with topological degree zero. In particular, any degree zero map whose critical norm stays bounded must be global-in-time and scatter to zero as $t \to \pm \infty$. 
\end{abstract}

\thanks{Support from the National Science Foundation, DMS-1302782 is gratefully acknowledged. The author would also like to thank Carlos Kenig and Wilhelm Schlag for many helpful conversations.}
\maketitle

\section{Introduction}
In this paper, we study a generalization of energy supercritical wave maps  due to Adkins and Nappi, \cite{AN}, which is a simplified version of the Skyrme model. Before introducing Adkins-Nappi wave maps we give a brief review of the original wave maps problem to motivate the generalization. 

The relevant wave map model in particle physics, called the nonlinear $\s$-model,  concerns  maps  $U: \R^{1+3} \to \Sp^3$. Here  a wave map is defined to be a formal critical point of the Lagrangian 
\ant{
\LL(U) = \frac{1}{2} \int_{\R^{1+3}} \eta^{\al \be} \ang{ \p_{\al} U, \p_{\be} U}_g \, dx\, dt,
} 
where $\eta$ is the Minkowski metric on $\R^{1+3}$ and $g$ is the round metric on $\Sp^3$. The Euler-Lagrange equations for  $\LL$ are given by 
\ant{
\eta^{\al \be} D_{\al} \p_{\be} U = 0,
}
where $D$ is the pull-back covariant derivative on $U^*T\Sp^3$.   Wave maps exhibit a conserved energy, 
 \ant{
 \E(\vec U)(t) = \frac{1}{2} \int_{\R^3} (\abs{\p_t U}_g^2 + \abs{\nabla U}_g^2) \, dx = \textrm{const.}
 }
and are invariant under the scaling 
\ant{
\vec U(t,x) :=(U(t, x), \p_tU(t, x)) \mapsto  \vec U_{\la}(t,x) := (U(t/ \la, x/ \la), \la^{-1} \p_tU(t/\la, x/ \la)), 
} 
for $\la>0$. We refer to the particular model at hand as energy super-critical since one can reduce the energy by concentrating the solution to a point via a rescaling. In particular, 
\ant{
\E(\vec U_{\la})  = \la \E(\vec U),
}
so when $\la \to 0$ the rescaled solution shrinks to a point  but the energy tends to zero, making it energetically favorable for the solution to concentrate. This leads one to expect that smooth finite energy initial data can lead to finite-time blow up, and this is indeed the case. One can see this in the usual $1$-equivariant formulation of this model, where $\psi$ is the azimuth angle measured from the north pole of $\Sp^3$, and the system of equations for $U$ reduce to an equation for $\psi$, viz., 
\ant{
\psi_{tt}- \psi_{rr} - \frac{2}{r} \psi_r+ \frac{\sin 2\psi}{r^2} = 0,
}
with conserved energy 
\ant{
\E(\vec \psi) = \frac{1}{2} \int_0^{\infty} \left(\psi_t^2 + \psi_r^2 + \frac{2\sin^2 \psi}{r^2} \right) \, r^2 \, dr.
}
One notes that initial data $(\psi_0, \psi_1)$ with finite energy must satisfy $\lim_{r \to \infty} \psi_0(r) = n \pi$ and that this integer $n$ is preserved by a continuous  flow. If we also require the origin to be sent to the north pole of the sphere, i.e., $\psi(t, 0)  = 0$, these two conditions give rise to a notion of a topological degree of the map. For example, degree $1$ maps have $\psi_0(0) = 0$,  $\psi_0(\infty) = \pi$, and thus wrap around the sphere once. 
Shatah, \cite{Shatah}, proved that such wave maps can indeed develop singularities in finite time  and later an explicit example of self-similar blow-up was given by Turok, Spergel, \cite{TS}, namely, 
\ant{
\psi(t, r)  = 2 \arctan \left(\frac{r}{t}\right).
}
Further,  there are no finite energy, nontrivial, stationary solutions --harmonic maps in this case-- which are what physicists would call topological solitons.

In the physics literature, there have been several attempts to modify this model to remove the possibility of finite-time blow up and to introduce topological solitons. The most famous modification is due to Skyrme, \cite{Skyrme}. The full Skyrme model involves a quasilinear system of equations, which makes it very difficult to analyze. However, one can make an equivariant assumption as above, which reduces the model to a semi-linear equation for the azimuth angle $\psi$. One of the appealing features of the equivariant Skyrme model  is the existence of nontrivial finite energy stationary solutions called Skyrmions, whose existence and uniqueness was established rigorously in~\cite{McT}. We note that finite energy solutions to the equivariant Skyrme equation exhibit a notion of topological degree similar to what is described for wave maps above. 

Recently, the stability of the Skyrmion was addressed numerically in \cite{BCR}.  Global existence and scattering for initial data that is small in the space $(\dot{B}^{\frac{5}{2}}\times \dot{B}^{\frac{3}{2}}) \cap (\dot{H}^1 \times L^2)$ was established in~\cite{GNR}. Global existence for large smooth initial data was proved in~\cite{Li}.  However, much stronger results are conjectured in the literature. In particular, it is believed, see \cite{BCR}, that all smooth finite energy degree $n$ data lead to a global solution which relaxes to a degree $n$ Skyrmion as $t \to \pm \infty$. In other words, Skyrmions are believed to be globally asymptotically stable in the energy space. This full conjecture presents many significant challenges, starting with the fact that the equation, while not scaling invariant, still exhibits features of a supercritical equation. 
 
The difficulties presented by Skyrme lead one to consider even simpler modifications of the wave maps model that still retain the many of the interesting mathematical features of Skyrme. In~\cite{KLS}, the author, together with Kenig and Schlag, considered $(1+3)$-dimensional wave maps exterior to the unit,  ball taking values in $\Sp^3$, and  with a Dirichlet boundary condition on the boundary of the ball -- a model introduced in~\cite{Biz}. There, it was shown that any finite energy data leads to a global and smooth solution which  relaxes to the unique harmonic map in its degree class as $t \to \pm \infty$. This extended the result in~\cite{LS}, which established the result in the case of degree $0$, or topologically trivial, initial data. 
  
Here we consider another semi-linear modification of the Skyrme model that was introduced by Adkins and Nappi,~\cite{AN} in the mid $80$'s. We seek to extend the techniques developed in~\cite{KLS} to Adkins-Nappi wave maps and begin the analysis of large data dynamics for this model. Before stating the main results, we provide a brief introduction to the Adkins-Nappi formulation. We also refer the reader to the recent papers, \cite{GNR, GR10b, GR10a}, for an excellent introduction which includes  physical motivation. 

\subsection{Adkins Nappi Wave Maps}
Consider maps $U: (\R^{1+3}, \eta) \to (\Sp^3, g)$  where $\eta$ is the Minkowski metric and $g$ is the round metric on $\Sp^3$. 
Let $A = A_{\al} dx^{\al}$ be a $1$-form (or gauge potential) and denote by $F_{\al \be} := \p_{\al}A_{\be}- \p_{\be} A_{\al}$ the associated curvature (or electromagnetic field). Adkins-Nappi wave maps are defined as formal critical  points of the Lagrangian
\EQ{ \label{AN Lag}
\LL(U, A) &=  \frac{ 1}{2}\int_{R^{1+3}} \eta^{\al \be} \ang{ \p_{\al} U, \p_{\be} U}_g \, dx\, dt, + \frac{1}{4} \int_{\R^{1+3}} F_{\al \be} F^{\al \be} \, dt \, dx \\
& \quad - \int_{\R^{1+3}} A_{\al} j^{\al} \, dt \, dx,
}
where the flux, or baryonic current, $j$ is defined by 
\ant{
j^{\al} = c \epsilon^{\al \be \ga \de}(\eta) \p_{\be} U^i \p_{\ga} U^j \p_{\de} U^{k} \epsilon_{i j k}(g)
}
for an appropriate normalizing constant $c$. Here $\epsilon$ is the Levi-Civita symbol, i.e.,  
\ant{
&\epsilon_{ijk}(g) = \sqrt{\abs{g}}  [ i, j, k],\\
&\epsilon^{\al \be \ga \de}(\eta) = \frac{1}{ \sqrt{\abs{ \eta}} }[ \al, \be, \ga, \de],
}
and the notation $[a, b, c]$ is defined as
\ant{
[a, b, c]:= \begin{cases} +1 \mif abc \textrm{ is an even permutation of 123,}\\ -1 \mif abc \, \,  \textrm{is an odd permutation of 123,} \\\ 0  \, \, \textrm{otherwise.} \end{cases}
}

 Note that~\eqref{AN Lag} is a generalization of the Lagrangian for wave maps, where here we have now introduced a coupling between maps $U: \R^{1+3} \to \Sp^3$ and $1$-forms $A$ on $\R^{1+3}$. Following \cite{GNR, GR10b, GR10a}, we consider only a restricted class of maps $U$ and forms $A$, namely we make an equivariance assumption. Let $(t, r, \theta, \phi)$ be polar coordinates on $\R^{1+3}$. The metric $\eta$ takes the form 
\ant{
(\eta_{\al \be}) =  \textrm{diag}(-1, 1,  r^2,  r^2 \sin^2 \theta).
}
Now, consider spherical coordinates on $\Sp^3 \subset \R^4$, 
\ant{
 ( \psi, \theta, \phi) \mapsto (  \sin \psi \sin \theta \sin \phi, \sin \psi \sin \theta \cos \phi, \sin \psi \cos \theta, \cos \psi),
 }
 with the metric $g$ in these coordinates given by 
 \ant{
 (g_{ij}) =  \textrm{diag}( 1, \sin^2 \psi, \sin^2 \psi \sin^2 \theta).
 }
Restricting to $1$-equivaraint maps means that we require $U$ to commute with the action of rotation on both the domain and target. This allows us to  make the standard $1$-equivariant ansatz for $U$,  viz., 
\EQ{
U(t, r, \theta, \phi) =  ( \psi(t, r), \theta, \phi),
}
and we assume the boundary condition $\psi(t, 0) = 0$ for all $t$. We also assume that the $1$-form $A$ satisfies  
\EQ{ \label{A ans}
A(t, r, \theta, \phi) = ( V(t, r), 0, 0, 0).
}
With these assumptions we have 
\EQ{
&\eta^{\al \be} g_{ij}(U) \p_\al U^i \p_\be U^j = - \psi_t^2 + \psi_r^2 + 2 r^{-2} \sin ^2 \psi, \quad F_{\al \be} F^{\al \be} = - 2 V_{r}^2,\\
&A_{\al} j^{\al} = V j^0  = 6c V \sin^2 \psi \psi_r r^{-2} = 3c r^{-2}V \p_r( \psi- \frac{1}{2}\sin2 \psi).
}
Hence if we restrict ourselves to $1$-equivariant maps $U$ and $1$-forms $A$ as in \eqref{A ans} the Lagrangian reduces to 
\EQ{ \notag
\frac{1}{ \pi} \LL(\psi, V) &=
 \frac{1}{2}\int_{\R}\int_0^{\infty}  \left( -\psi_t^2 + \psi_r^2 + 2\frac{ \sin^2 \psi}{r^2} \right) \, r^2 \, dr \, dt\\
& \quad- \frac{1}{2} \int_{\R} \int_0^{\infty} V_r^2 \, r^2 \, dr  \, dt- 3c \int_{\R}\int_0^{\infty} V_r ( \psi- \sin \psi \cos \psi) \, dr \, dt,
}
which we can rewrite conveniently as 
 \EQ{ \notag
\frac{1}{ \pi}   \LL( \psi, V) &= \frac{1}{2} \int_{\R}\int_0^{\infty}  \left( -\psi_t^2 + \psi_r^2 + 2\frac{ \sin^2 \psi}{r^2}  + \al^2 \frac{ (\psi- \sin \psi \cos \psi)^2}{r^4}\right) \, r^2 \, dr \, dt \\
&\quad -  \frac{1}{2}\int_{\R} \int_0^{\infty} \left( r V_r +  \al  \frac{ ( \psi- \sin \psi \cos \psi)}{r} \right)^2 \, dr \, dt,
}
where $\al = 3c$. Next, observe that if $( \psi, V)$ is a stationary point for $\LL$ then we would have 
\ant{
0=\frac{d}{d \e} \vert_{\e=0} \LL( \psi, V+ \e W) = -C\int_{ \R} \int_0^{\infty}  \p_r\left(r^2V_r + \al ( \psi- \sin \psi \cos \psi) \right) W \, dr \, dt.
}
From this we can deduce that 
\EQ{ \label{V eqn}
r^2V_r + \al  (\psi- \sin \psi \cos \psi) = 0
}
for any stationary point $(\psi, V)$. Indeed the variational equation for $V$ is given by 
\ant{
 \p_r(r^2 V_r) = -  2\al \sin^2 \psi \psi_r.
}
This leads to a decoupling of the Euler-Lagrange equations for $\psi$ and $V$. Indeed, after rescaling the coordinates~$(t, r) \mapsto ( \al t, \al r)$, we can use \eqref{V eqn} to obtain the Euler-Lagrange equation for the azimuth angle $\psi$, which we formulate with Cauchy data:
\EQ{\label{an eq}
&\psi_{tt} -\psi_{rr} -\frac{2}{r} \psi_r + \frac{\sin(2 \psi)}{r^2} + \frac{( \psi - \sin\psi \cos\psi)(1-\cos2 \psi)}{r^4} = 0,\\
& \vec \psi(0) = ( \psi_0, \psi_1).
}
We  will use the notation $\vec \psi(t):=( \psi(t), \psi_t(t))$. The conserved energy is 
\EQ{
\E( \vec \psi)(t) = \int_0^{\infty}\left[ \frac{1}{2}( \psi_t^2 + \psi_r^2) + \frac{\sin^2 \psi}{r^2} + \frac{(\psi- \sin\psi \cos\psi)^2}{2r^4} \right] \, r^2 \, dr.
}
\begin{rem}
Observe that finite energy and continuous dependence on the initial data imply that for each $t \in I$ where $I \ni 0$ is an interval on which the solution exists, we have $\psi(t, 0) = 0$ and $\lim_{r \to \infty} \psi(t, r) =  n \pi$ for a fixed integer $ n \in \Z$. We  refer to $n$ as the {\em degree} of the map. We denote by $\E_{ n}$ the space of all finite energy data of degree $n$
\EQ{
\E_n := \{ (\psi_0, \psi_1) \mid \, \E (\vec \psi) <\infty, \, \, \psi_0(0)=0, \, \, \psi_0( \infty) = n \pi\}.
}
\end{rem}
\begin{rem}
We also remark that  the equation~\eqref{an eq} is not invariant under any scaling. However, as we will see in Sections~\ref{5d reduction} and~\ref{small scale} the underlying scale-invariant equation is indeed energy super-critical.  In fact, in order to formulate a small data scattering theory for degree zero Cauchy data, i.e., $\vec \psi(0) \in \E_0$,  we are forced to work, not in the energy space, but rather  
in the inhomogeneous Sobolev space 
\EQ{
\HH:= \{ (\psi_0, \psi_1) \in \vec \E_0 \mid, \vec \psi(0) \in (\dot{H}^2 \times \dot H^1) \cap (\dot{H}^1 \times L^2) \}
}
endowed with the norm
\EQ{
\|(\psi_0, \psi_1) \|_{\HH} &:= \|(\psi_0, \psi_1)\|_{(\dot{H}^2 \times \dot H^1) \cap (\dot{H}^1 \times L^2)( \R^3)}\\
& = \| (u_0, u_1) \|_{ \dot H^1 \times L^2 (\R^3)}  +  \|(u_{0, r}, u_{1, r})\|_{\dot{H}^1 \times L^2(\R^3)}.
}
We again refer the reader to Sections~\ref{5d reduction} and \ref{small scale} for a detailed analysis on why the $\dot{H}^2 \times \dot{H}^1$ norm is critical for this problem. The main drawback here is that while the conserved energy controls the $\dot{H}^1 \times L^2$ norm of the evolution, it is not known a priori if the $\dot{H}^2 \times \dot{H}^1$ norm of the solution should remain bounded. We are thus forced to make this assumption a prioi in order to study large data dynamics, which is why we refer to  the results in this paper as ``conditional."
\end{rem}

\begin{rem}
In this paper we will focus our attention on the case of degree zero initial data, i.e., $\vec \psi(0) \in \E_0$. The analysis is simpler in this class of topologically trivial data as one does not need linearize about a nontrivial stationary solution to establish a perturbative small data theory, and we are thus free to focus directly on the question of the asymptotic behavior of large initial data.  
\end{rem}
We can now state the main theorem. 

\begin{thm}\label{main} Let $( \psi_0, \psi_1) \in \E_0$. Then, there is a unique solution $\vec \psi(t) \in \E_0$ to \eqref{an eq} with initial data $\vec \psi(0) = ( \psi_0, \psi_1)$ defined on its maximal interval of existence $0 \in I_{\max} = (T_-, T_+)$. Assume in addition that we have 
\EQ{
\sup_{t \in [0, T_+)} \| \vec \psi(t) \|_{\HH} < \infty. 
}
Then, in fact $T_+ = + \infty$, i.e., $\vec\psi(t)$ is defined globally for positive times. Moreover, $\vec \psi(t)$ scatters to zero in $\HH$ as $t \to  \infty$. The corresponding statement holds for negative times as well. 
 \end{thm}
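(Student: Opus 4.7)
The proof would follow the Kenig-Merle concentration-compactness / rigidity roadmap, adapted to the non-scale-invariant Adkins-Nappi equation, in the spirit of the author's earlier treatment of exterior wave maps in~\cite{KLS, LS}.

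\textbf{Small data theory.} First I would establish satisfactory local well-posedness and small-data scattering in $\HH$. The natural substitution $u = \psi/r$ converts the linear part of~\eqref{an eq} into a radial $5$-dimensional free wave equation. As the final remark indicates, this substitution identifies $\|(u,\p_t u)\|_{\dot H^1\times L^2(\R^5)}$ with $\|\vec\psi\|_{\dot H^2 \times \dot H^1(\R^3)}$; together with the $\dot H^1 \times L^2$ piece already controlled by the conserved energy, this makes $\HH$ the correct critical space for the reduced 5d problem. The nonlinear terms $r^{-2}(\sin 2\psi - 2\psi)$ and $r^{-4}(\psi-\sin\psi\cos\psi)(1-\cos 2\psi)$ are at least cubic in $\psi$, and after transferring to the $5$d formulation they can be handled as integrable perturbations using radial $5$d Strichartz estimates combined with Hardy-type inequalities.

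\textbf{Critical element.} Assuming the theorem fails, one defines the critical threshold
\[
A^* := \sup\bigl\{A>0 : \text{every degree-}0\text{ solution with } \sup_{t \in [0,T_+)}\|\vec\psi(t)\|_\HH < A \text{ scatters in } \HH\bigr\},
\]
which is positive by the small-data theory and finite by the contradiction hypothesis. A Bahouri-Gerard profile decomposition in $\HH$, carried out in the $5$d reduced variables and combined with the nonlinear perturbation theory, extracts in the standard way a \emph{critical element}: a solution $\vec\psi_*$ with $\sup_t\|\vec\psi_*(t)\|_\HH = A^*$, which does not scatter and whose trajectory is precompact in $\HH$ modulo a continuous rescaling $\la(t)>0$. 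The decisive feature, already exploited in~\cite{KLS}, is that the $\dot H^1 \times L^2$ piece of $\HH$ is scale non-invariant and is controlled by the conserved energy; precompactness of the modulated trajectory therefore forces $\la(t)$ to remain bounded above and bounded away from zero on $[0,T_+)$. The coercive densities $r^{-2}\sin^2\psi$ and $r^{-4}(\psi-\sin\psi\cos\psi)^2$ in $\E(\vec\psi)$ simultaneously prevent the solution from concentrating at the origin or escaping to spatial infinity in the degree-$0$ class.

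\textbf{Rigidity.} To conclude I would rule out the critical element by a virial/Morawetz argument, testing the equation against a truncated scaling generator of the form $\chi_R(r)(r\psi_r + c\psi)$ for a suitable constant $c$ and cutoff $\chi_R$. The time derivative of the resulting functional contains a coercive bulk term featuring the manifestly non-negative quantity $r^{-4}(\psi-\sin\psi\cos\psi)^2$ (which vanishes only when $\psi \equiv 0$ in the degree-$0$ class), plus boundary and error terms that are controlled by the precompactness of $\vec\psi_*$ and the two-sided bounds on $\la(t)$. Integrating this identity in time along the trajectory of $\vec\psi_*$ then forces $\psi_* \equiv 0$, contradicting $A^*>0$.

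\textbf{Main obstacle.} The hardest step is the rigidity. The loss of exact scale invariance is a double-edged sword: it confines the modulation parameter $\la(t)$ to a bounded interval and simplifies the compactness picture, but it obstructs a clean monotonicity formula — the subcritical contributions to the virial identity coming from the $r^{-2}\sin^2\psi$ term must be carefully tracked and absorbed using the compactness of the trajectory together with the degree-$0$ boundary condition at $r = \infty$. A secondary technical issue is that $\HH$ is a \emph{sum} of two Sobolev norms at different scales, so the profile decomposition must be two-scale; this parallels the setup used in~\cite{KLS, LS} for the exterior Skyrme problem and should adapt with substantial but essentially technical effort.
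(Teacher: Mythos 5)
Your overall roadmap (5d reduction via $u=\psi/r$, Strichartz-based small data theory in $\HH$, concentration compactness, rigidity) matches the paper's, and the small data step is essentially right. However, there are two genuine gaps.

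First, in the construction of the critical element you dismiss small-scale concentration too quickly. Since the scales $\la_{n,j}$ in the profile decomposition can tend to $0$ (the problem is energy supercritical, and $\HH$ bounds do not forbid concentration at vanishing scales), the paper must deal with two distinct species of profiles: fixed-scale ``Adkins--Nappi'' profiles, which solve \eqref{u eq}, and ``Euclidean'' profiles living at scales $\la_{n,j}\to 0$, which after rescaling solve the \emph{defocusing quintic} wave equation $v_{tt}-\Delta v+\tfrac43 v^5=0$ in $\R^{1+5}$ (Proposition~\ref{small scale prop}). Excluding a non-scattering Euclidean profile is not a consequence of the coercivity of the energy density; it requires the conditional scattering theorem of Kenig--Merle for the defocusing supercritical quintic equation (Theorem~\ref{euc thm}): a non-scattering solution of that equation must have unbounded $\dot H^2\times\dot H^1$ norm, which contradicts the finiteness of the critical threshold. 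Without this input your argument cannot rule out that the minimal non-scattering behavior occurs at vanishing scale, and consequently the clean ``compact trajectory with no modulation'' picture you invoke is unjustified.

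Second, the proposed virial/Morawetz rigidity is the step most likely to fail, and the paper deliberately avoids it. The nonlinearity is \emph{not} defocusing: in the $u$-formulation the cubic coefficient satisfies $Z_1(\rho)=(\sin 2\rho-2\rho)/\rho^3\to -\tfrac43$ as $\rho\to0$, i.e.\ the term $r^{-2}(\sin 2\psi-2\psi)$ is a focusing contribution, so the bulk term in any virial identity is sign-indefinite; your claim of a ``manifestly non-negative'' bulk featuring only $r^{-4}(\psi-\sin\psi\cos\psi)^2$ omits this competing piece, and there is no evident mechanism to absorb it. The paper instead runs a channels-of-energy argument: the exterior energy estimate for the free radial $5$d wave (Proposition~\ref{linear prop}) combined with precompactness yields the spatial asymptotics $r^3u_0(r)\to\ell_0$, and then the classification of solutions of the stationary ODE \eqref{ode} with $\fy_\al(r)=\al r^{-2}+O(r^{-6})$ (Proposition~\ref{ode prop}) shows that $\ell_0\neq0$ would force $\vec\psi(0)=(\fy_{\ell_0},0)\notin\E_0$, while $\ell_0=0$ forces compact support and then vanishing. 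A virial functional as you describe it does not see this elliptic obstruction at spatial infinity, so even setting aside the sign problem, your rigidity argument is incomplete as stated.
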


\begin{rem}
The statement ``$\vec \psi(t)$ scatters to zero in $\HH$ as $t \to \pm \infty$" means that there exists a  solution $\vec \fy_{L}^\pm(t)$ to the linearized wave equation: 
 \EQ{ \label{linear wave}
 \fy_{tt} - \fy_{rr}- \frac{2}{r} \fy_r + \frac{2}{r^2} \fy = 0
 }
so that 
\EQ{
\| \vec \psi(t) - \vec \fy_{L}^\pm(t)\|_{\HH} \to 0 \mas t \to \pm \infty
}
\end{rem}

\begin{rem} Theorem~\ref{main} rules out what is referred to as type-II blow-up for~\eqref{an eq}. The contrapositive statement of the theorem says that if a degree zero Adkins-Nappi wave map blows-up in finite time ($T_+ < \infty$), or if $T_+ = \infty$ but the map does not scatter, then the norm $\| \vec \psi(t) \|_{\HH} \to \infty$ as $t \to T_+$ and both of these situations are referred to as type-I behavior. This type of result has been recently established for the semi-linear wave equation with power-type nonlinearities in focusing subcritical problems, \cite{Shen}, for defocusing super-critical problems, \cite{KM11a, KM11b, KV11a, KV11b,  Bul12a, Bul12b}, and recently for the $3d$ focusing super-critical wave equation,~\cite{DKM5}.  
Here we are also in a super-critical situation but the nonlinearity has both a focusing piece and a defocusing piece as we shall see in Section~\ref{5d reduction}.  
\end{rem}

 The proof of Theorem~\ref{main} follows the concentration compactness/rigidity method introduced by Kenig and Merle in~\cite{KM06, KM08}. 
The argument is performed after reducing~\eqref{an eq} to a $5d$ equation by the substitution $r u(t, r):= \psi(t, r)$. The details of this reduction is outlined in Section~\ref{5d reduction}, but here we simply note that the equation for $u$ is of the form 
\EQ{\label{u eq1}
u_{tt} - u_{rr} - \frac{4}{r} u_r + Z_1( \psi) u^3 + Z_2( \psi) u^5 =0,
} 
where $Z_1, Z_2$ are bounded functions. The nonlinearity has both cubic and quintic powers, both of which  are energy super-critical due to the fact that the equation lives in $1+5$ dimensions. This facet of the equation requires extra attention, and indeed two-types of profiles emerge in the concentration compactness argument. We call the first type of profile ``Euclidean" and these correspond to subsequences of initial data which concentrate on very small scales on which the underlying quintic semi-linear equation 
\ant{
v_{tt} -v_{rr} - \frac{4}{r} v_r  +\frac{4}{3} v^5=0
} 
serves as a good approximation to~\eqref{u eq1}, see Section~\ref{small scale} and Section~\ref{nonlinear profiles}. The other profiles emerge from subsequences of data that live at a fixed scale, and we call these profiles  Adkins-Nappi as they correspond to genuine solutions of~\eqref{u eq1}. 

The proof of Theorem~\ref{main} begins in earnest in Section~\ref{CE}. The concentration compactness argument and construction of the critical element follow roughly the procedure introduced in~\cite{KM10} with additional arguments to handle the appearance of both Euclidean and Adkins-Nappi profiles in the nonlinear profile decomposition. 

Finally, we note that the rigidity argument, where one rules out the possibility of nonzero solutions with pre-compact trajectory (a property enjoyed by the critical element), follows the techniques introduced in~\cite{KLS}. This new type of argument, based on the ``channels of energy" method introduced in the seminal works~\cite{DKM1, DKM2, DKM3} and especially in~\cite{DKM4, DKM5}, avoids using any dynamical identities or inequalities, such as virial or Morawetz, which rely heavily on the specific structure of the nonlinearity, and are in general poorly suited to the complicated nonlinearities in geometric equations.  The method used here relies on specific information about the underlying elliptic equation proved in Section~\ref{stat sols} and on the exterior energy estimates for the underlying free radial wave equation proved in~\cite{KLS}, see Proposition~\ref{linear prop}. 


\section{Preliminaries}
In this section we will establish a few simple facts about finite energy solutions to~\eqref{an eq}. 
We first note that any finite energy solution is necessarily bounded. 
\begin{lem}\cite{GNR}
All finite energy solutions  $\vec \psi(t) $ to \eqref{an eq} satisfy the bound 
\EQ{
\abs{\psi(t, r)} \le C( \E(\vec \psi)),
}
where the function $C:(0, \infty) \to (0, \infty)$, satisfies  $C(\rho) \to 0$ as $\rho \to 0$. 
\end{lem}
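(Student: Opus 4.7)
The plan is to construct a pointwise-controlling antiderivative that couples the Skyrme potential term in $\E(\vec\psi)$ to the kinetic term via Cauchy--Schwarz. Define
\[
G(\psi) \;:=\; \int_0^{\psi} (s - \sin s\cos s)\, ds \;=\; \tfrac{1}{2}\bigl(\psi^2 - \sin^2\psi\bigr).
\]
The critical point is that $G'(\psi) = \psi - \sin\psi\cos\psi$ is exactly the nonlinear factor appearing in the Skyrme potential term of the energy. From $G''(\psi) = 2\sin^2\psi \ge 0$ and $G'(0)=0$ one sees that $G$ is even, strictly increasing on $[0,\infty)$, with $G(0)=0$ and $G(\psi)\to\infty$ as $\psi\to\infty$. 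Hence $G$ restricts to a continuous bijection $[0,\infty)\to[0,\infty)$ whose inverse $G^{-1}$ is continuous at $0$ with $G^{-1}(0)=0$.

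Given a finite-energy solution $\vec\psi$, use the boundary condition $\psi(t,0)=0$ together with the fundamental theorem of calculus to write
\[
G(\psi(t,r)) \;=\; \int_0^r \bigl(\psi - \sin\psi\cos\psi\bigr)(t,\rho)\,\psi_\rho(t,\rho)\, d\rho.
\]
Splitting the integrand as $\tfrac{\psi-\sin\psi\cos\psi}{\rho}\cdot\rho\,\psi_\rho$ and applying Cauchy--Schwarz yields
\[
|G(\psi(t,r))| \;\le\; \left(\int_0^r \frac{(\psi-\sin\psi\cos\psi)^2}{\rho^2}\, d\rho\right)^{1/2}\left(\int_0^r \psi_\rho^2\,\rho^2\, d\rho\right)^{1/2} \;\le\; 2\,\E(\vec\psi),
\]
since each parenthesised integral is bounded by $2\E(\vec\psi)$: the first by the Skyrme potential term in the energy density (which equals $\frac{(\psi-\sin\psi\cos\psi)^2}{2\rho^2}$ after multiplying by $\rho^2$), and the second by the radial kinetic term. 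Setting $C(\rho) := G^{-1}(2\rho)$ and invoking evenness of $G$ produces the claimed bound, and $C(\rho)\to 0$ as $\rho\to 0$ by continuity of $G^{-1}$ at the origin.

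There is no real obstacle here beyond identifying the right antiderivative. The only subtle point -- and the reason the lemma is nontrivial -- is that no single term of $\E(\vec\psi)$ suffices on its own: the $r^{-2}\sin^2\psi$ term is uniformly bounded in $\psi$ and therefore cannot control large values, while the radial kinetic term alone lacks the scale-breaking weight to yield a pointwise estimate on $\psi$ via a one-dimensional Sobolev embedding. Coupling them through the Skyrme potential is what closes the estimate.
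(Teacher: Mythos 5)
Your proof is correct and follows essentially the same route as the paper: the same auxiliary function $G(\psi)=\tfrac12(\psi^2-\sin^2\psi)$, the same fundamental-theorem-of-calculus representation from the boundary condition $\psi(t,0)=0$, and the same Cauchy--Schwarz pairing of the radial kinetic term with the Skyrme potential term, followed by inverting $G$. The only (immaterial) difference is the constant: bounding each factor by $2\E$ gives $|G(\psi)|\le 2\E$, whereas the paper's AM--GM-style grouping gives $|G(\psi)|\le\E$.
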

\begin{proof}
To see this we introduce the function 
\ant{
G( \rho):= \int_0^{\rho}( \theta - \sin \theta \cos \theta) \, d \theta = \frac{1}{2}( \rho^2- \sin^2 \rho).
}
Note that $G(0)=0$, $G( \rho) >0$ for $\rho \neq 0$ and $G( \rho) \to \infty$ as $\abs{\rho} \to \infty$. For a solution $\vec \psi(t, r) \in \E_n$ to \eqref{an eq} we then have 
\ant{
&G( \psi(t, r))= G( \psi(t,r)) - G( \psi(t, 0))= \\
&=  \int_{\psi(t, 0)}^{\psi(t, r)} ( \theta - \sin \theta \cos\theta) \, d \theta= \int_0^r( \psi(t,r) - \sin \psi(t, r) \cos\psi(t, r) ) \psi_r(t, r) \, dr\\
& \le \left( \int_0^r \psi_r^2(t, r) \, r^2\, dr\right)^{\frac{1}{2}} \left( \int_0^r  \frac{( \psi(t, r)- \sin\psi(t,r) \cos \psi(t, r))^2}{r^4} \, r^2 \, dr \right)^{\frac{1}{2}}\\
& \le  \E_0^r( \vec \psi(t)) \le \E(\vec \psi).
}
This gives the desired boundedness. 
\end{proof}
Next we move to the study of stationary solutions to~\eqref{an eq} which vanish as $r \to \infty$. 
\subsection{Stationary Solutions}\label{stat sols}
In this subsection we discuss the underlying elliptic theory for the Adkins-Nappi wave maps equation,~\eqref{an eq}. That is, we study solutions $\fy$ to the ODE
\EQ{\label{ode}
&\fy_{rr} +\frac{2}{r}\fy_r = \frac{\sin(2 \fy)}{r^2} + \frac{( \fy - \sin \fy\cos \fy)(1-\cos2 \fy)}{r^4}.
}
As this paper concerns the case of degree zero maps, we restrict our attention to those solutions that vanish at $r = \infty$, i.e., 
\EQ{ \label{go to 0}
\fy(r) \to 0 \mas r \to \infty.
}
We will show that the zero solution $\fy \equiv 0$ is the only {\em finite energy} solution to \eqref{ode} that also satisfies~\eqref{go to 0}. However, in the analysis in later sections we will need to know a bit more about solutions to~\eqref{ode}   satisfying ~\eqref{go to 0}. In particular, we will make use of a $1$-parameter family of solutions given by the following elementary result. 
\begin{prop}\label{ode prop}
For every $\al \in \R$ there exists a solution, $\fy_{\al}$ to~\eqref{ode} defined on the interval $r \in (0, \infty)$ such that 
\EQ{
\fy_{\al}(r)  = \al r^{-2} + O(r^{-6}) \mas r \to \infty. \label{fy at inf}
}
The $O( \cdot)$ vanishes for $\al = 0$. Moreover, of the family of solutions $\fy_{\al}$, the solution $\fy_0 \equiv 0$ corresponding to $\al =0$ is the only such solution to~\eqref{ode} which also vanishes at $r=0$, making it the unique solution in this family which also lies in the energy class~$\E_0$. 
\end{prop}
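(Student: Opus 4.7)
My plan is to prove the proposition in three steps: a contraction-mapping construction of $\fy_\al$ near infinity, classical ODE continuation down to $r=0$, and a Pohozaev-type identity for the uniqueness claim.

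For the first step I would rewrite \eqref{ode} as $L\fy = N(\fy)$, where
\[
L := \p_r^2 + \frac{2}{r}\p_r - \frac{2}{r^2}, \qquad N(\fy) := \frac{\sin(2\fy)-2\fy}{r^2} + \frac{(\fy-\sin\fy\cos\fy)(1-\cos 2\fy)}{r^4},
\]
and Taylor expansion around $\fy=0$ gives $N(\fy) = O(\fy^3/r^2) + O(\fy^5/r^4)$. The Euler operator $L$ has fundamental system $\{r, r^{-2}\}$ with Wronskian $-3r^{-2}$, and crucially $L(\al r^{-2})\equiv 0$, identifying $\al r^{-2}$ as an ``exact'' first approximation. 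Setting $\fy = \al r^{-2} + w$ and inverting $L$ by variation of parameters against the branches that decay at infinity yields the fixed-point equation
\[
w(r) = (\mathcal T w)(r) := -\frac{r}{3}\int_r^\infty N(\al s^{-2}+w(s))\,ds + \frac{1}{3r^2}\int_r^\infty s^3\, N(\al s^{-2}+w(s))\,ds.
\]
For $w$ in the weighted space $X_{R_0,M} := \{w \in C((R_0,\infty)) : \sup_{r \ge R_0} r^6 |w(r)| \le M\}$, the integrand is $O(s^{-8})$ and each integral produces $O(r^{-6})$; routine bookkeeping shows that for $R_0 = R_0(\al)$ large and $M = M(\al)$ suitable, $\mathcal T$ sends $X_{R_0,M}$ into itself and is a strict contraction. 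Banach's theorem then yields a unique $w_\al$, giving $\fy_\al(r) = \al r^{-2} + O(r^{-6})$. When $\al = 0$ the zero function is visibly a fixed point, so $\fy_0 \equiv 0$ exactly by uniqueness.

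To extend $\fy_\al$ from $(R_0,\infty)$ down to $(0,R_0]$, I would invoke standard ODE continuation: on compact subsets of $(0,\infty)$ the coefficients of \eqref{ode} are smooth and the nonlinear terms depend on $\fy$ only through bounded trigonometric expressions, so writing the equation as the first-order system for $(\fy, r^2\fy_r)$ and applying an elementary Gr\"onwall bound excludes blow-up of $(\fy, \fy_r)$ at any finite positive $r$; hence $\fy_\al$ extends to all of $(0, \infty)$.

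For the uniqueness assertion I would multiply \eqref{ode} by $r^4\fy_r$. Using $\sin(2\fy)\fy_r = (\sin^2\fy)'$ and $(\fy-\sin\fy\cos\fy)(1-\cos 2\fy)\fy_r = \tfrac{d}{dr}\bigl[\tfrac{1}{2}(\fy-\sin\fy\cos\fy)^2\bigr]$, everything reorganizes into
\[
\frac{d}{dr}\left[\frac{r^4}{2}\fy_r^2 - r^2\sin^2\fy - \tfrac{1}{2}(\fy-\sin\fy\cos\fy)^2\right] = -2r\sin^2\fy.
\]
Suppose $\fy_\al(0) = 0$: the indicial exponents of the linearization of \eqref{ode} at $r=0$ are $1$ and $-2$, so this boundary condition forces $\fy_\al(r) \sim cr$ near the origin, making every bracketed quantity vanish there; the asymptotic $\fy_\al \sim \al r^{-2}$ at infinity likewise kills the bracket at $\infty$. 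Integrating from $0$ to $\infty$ then yields $\int_0^\infty r\sin^2\fy\,dr = 0$, forcing $\sin\fy \equiv 0$ and hence $\fy \equiv 0$ by continuity with $\fy(0) = 0$. This is incompatible with $\fy_\al \sim \al r^{-2}$ at infinity unless $\al = 0$. The main obstacle I foresee is the bookkeeping in the contraction estimate for $\mathcal T$ --- carefully tracking both the cubic $\fy^3/r^2$ and quintic $\fy^5/r^4$ contributions to $N$ --- together with verifying vanishing of the Pohozaev boundary terms at $r=0$, where the indicial analysis is what rules out the bad $r^{-2}$ mode.
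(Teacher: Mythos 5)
Your proposal is correct and follows essentially the same route as the paper: the paper performs the fixed-point construction at infinity in logarithmic coordinates $s=\log r$ with fundamental system $e^{\pm 3s/2}$ (equivalent to your $\{r, r^{-2}\}$ after undoing the substitution $g=e^{s/2}\phi$), and its uniqueness argument uses exactly your monotone quantity, namely $\Phi(r)=r^4\fy_r^2-2r^2\sin^2\fy-(\fy-\sin\fy\cos\fy)^2$ with $\Phi'=-4r\sin^2\fy$. The only divergence is at the origin, where the paper's lemma simply assumes $\fy'(r)\to\beta$ finite as $r\to 0$ in order to conclude $\Phi(0)=0$, whereas you sketch an indicial-root argument to force $\fy\sim cr$; that step would need fleshing out, and note also that $\fy-\sin\fy\cos\fy$ grows linearly in $\fy$ rather than being bounded, though linear growth is precisely what your Gr\"onwall continuation requires.
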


\begin{proof}
We find it convenient to first perform the following change of variables. Set $s = \log r$ and $\phi(s) := \fy(r)$. Then~\eqref{ode} can be written as an equation for $\phi$, viz., 
\EQ{\label{phi eq}
\phi_{ss} + \phi_s - 2 \phi = (\sin(2 \phi)- 2 \phi) +e^{-2s}( \phi - \sin\phi \cos \phi)(1- \cos 2\phi),
}
where above we have moved the linear term in $\sin2 \phi$ over to the left-hand-side. In this formulation it is apparent that all solutions $\phi(s)$ to \eqref{phi eq} are defined globally for $s \in (- \infty, \infty)$ due to the linear in $\phi$ growth of the nonlinearity above. 
The existence and uniqueness statements will be proved via a standard iteration argument. We sketch the details below. In order to remove the $\phi_s$ term we do another change of variables. We define 
\EQ{\label{g def}
g(s):= e^{\frac{s}{2}} \phi(s).
}
The equation~\eqref{phi eq} reduces to an equation for $g$, namely, 
\EQ{
&g''-\frac{9}{4}g= N_1(s, g) + N_2(s, g),\\
&N_1(s, g):= e^{\frac{s}{2}}(\sin(2 e^{-\frac{s}{2}}g)- 2 e^{-\frac{s}{2}}g),\\
&N_2(s, g):= e^{-\frac{3}{2}s}( e^{-\frac{s}{2}}g- \frac{1}{2}\sin 2 e^{-\frac{s}{2}}g)(1- \cos 2e^{-\frac{s}{2}}g).
}
The nonlinearities, $N_1(s, g), N_2(s, g)$ satisfy the estimates 
\EQ{\label{N1N2}
&\abs{N_1(s, g)} \lesssim e^{-s} \abs{g}^3,\\
&\abs{N_2(s, g)} \lesssim e^{-4s}\abs{g}^5.
}
Next, consider the fundamental system, 
\EQ{
f_1(s) = e^{-\frac{3}{2}s},  \, \, \, 
f_2(s) = e^{\frac{3}{2}s},
}
to the underlying linear equation 
\EQ{
f'' - \frac{9}{4}f = 0.
}
As the Wronskian $W(f_1, f_2):=f_1'f_2 - f_2'f_1 = -3$, for each $\al \in \R$ we seek a solution $g_{\al}$ to the integral equation 
\EQ{\label{int eq}
g_{\al}(s) = \al f_1(s) + \frac{1}{3} \int_s^{\infty}[f_1(s)f_2(t) - f_2(s)f_1(t)](N_1(t,g_{\al})+ N_2(t, g_{\al}))\, dt,
}
on the interval $s_0 \le s \le \infty$ for some $s_0>0$. This is achieved via successive approximations. For each $k \in \N$ we set 
\ant{
&g_{\al, 0}(s):= f_{1}(s),\\
&g_{\al, k}(s):= \al f_1(s) + \frac{1}{3} \int_s^{\infty}[f_1(s)f_2(t) - f_2(s)f_1(t)](N_1(t,g_{\al, k-1})+ N_2(t, g_{\al, k-1}))\, dt.
}
One can show via a standard iterative argument that there exists $s_0>0$ so that $g_{\al, k}(s)$ converges uniformly on the interval $[s_0, \infty)$ to a function $g_{\al}(s)$ which satisfies~\eqref{int eq}. Moreover, we have 
\EQ{
g_{\al}(s) = \al e^{-\frac{3}{2} s} + O(e^{-\frac{11}{2} s}) \mas s \to \infty.
}
By~\eqref{g def} this means that for each $\al \in \R$ there exists $\phi_{\al}$, solving~\eqref{phi eq} and satisfying 
\EQ{
\phi_{\al}(s) = \al e^{-2s} +O(e^{-6s}) \mas s \to \infty,
}
which is exactly~\eqref{fy at inf} after rephrasing the above in terms of $r = e^{s}$ and $\fy(r) = \phi(s)$. 

It remains to prove that $\fy_0(r) \equiv 0$ is the only solution to~\eqref{ode} that  satisfies~\eqref{fy at inf} {\em and} vanishes at $r = 0$. We phrase this as a lemma. 
\begin{lem}\label{0 lem}Let $\fy_{\al}$ solve~\eqref{ode} and satisfy~\eqref{fy at inf}. Suppose in addition that 
\EQ{\label{fy lims}
&\lim_{r \to 0} \fy_{\al}(r) = 0,\\
&\lim_{r \to 0} \fy'_{\al}(r) = \beta \in \R,
}
and we can thus define $\fy_\al(0) = 0$. Then $\fy_{\al} = \fy_0 \equiv 0$. 
\end{lem}
\begin{proof}[Proof of Lemma~\ref{0 lem}]
We proceed via contradiction. Let $\fy(r) = \fy_{\al}(r)$ solve~\eqref{ode} and satisfy~\eqref{fy at inf} and suppose in addition that that $\fy(r) \not\equiv 0$ and that $$\lim_{r \to 0} \fy(r) = 0.$$ 
Define the auxiliary function 
\EQ{ \label{P def}
P(r) = r^2 \fy_r^2  - 2 \sin^2 \fy - r^{-2}( \fy - \sin \fy \cos \fy)^2.
}
Using that $\fy$ solves~\eqref{ode} one can readily show that 
\ant{
P'(r) = -2r \fy_r^2 +2r^{-3}( \fy - \sin \fy \cos \fy)^2.
}
Thus 
\ant{
\p_r(r^2 P(r)) = r^2 P'(r) + 2r P(r) = -4r \sin^2 \fy.
}
We now set $\Phi(r):= r^2P(r)$. We have shown that 
\ant{
&\Phi'(r) = -4r \sin^2 \fy.
}
By~\eqref{fy lims} it is apparent that $\Phi(0) = 0$, and we thus obtain the expression 
\ant{
\Phi(r) = -4\int_0^r \rho \sin^2 \fy(\rho) \, d \rho <0 \mfor r >0.
}
Since $\fy(r) \not \equiv 0$ satisfies~\eqref{fy at inf} the limit
\EQ{\label{neg lim}
\lim_{r \to \infty} \Phi(r) = - 4\int_0^\infty \rho \sin^2 \fy(\rho) \, d \rho <0,
} 
exists. On the other hand, combining \eqref{P def} and~\eqref{fy at inf} we have 
\ant{
\lim_{r \to \I}\Phi(r) = \lim_{r \to \infty} (r^4 \fy_r^2  - 2 r^2\sin^2 \fy - ( \fy - \sin \fy \cos \fy)^2) = 0,
}
which contradicts~\eqref{neg lim}. 
\end{proof}
This also completes the proof of Proposition~\ref{ode prop}. 
\end{proof}

\section{Small data theory}
\subsection{$5d$ Reduction}\label{5d reduction}
In what follows we will require a version of Hardy's inequality for radial functions which we state now for convenience. For the proof, we refer the reader to \cite{STZ94}. 
\begin{lem}[Hardy's Inequality] \cite[Lemma $1.2$]{STZ94} \label{Hardy}Let $d \ge 4$. There exists a constant $C>0$ so that for all radial functions $v \in \dot W^{s, q}(\R^d)$, with $1 \le q \le s q <d$  and for all $p$ with $q \le p \le \infty$ we have 
\ant{
\| r^{\frac{d}{q} - \frac{d}{p} - s}v \|_{L^p( \R^d)} \le C \|v \|_{ \dot{W}^{s, q}(\R^d)}.
}
\end{lem}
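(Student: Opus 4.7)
The plan is to prove the inequality by reducing it to the two endpoint cases $p = \infty$ and $p = q$ and interpolating between them via a weighted Hölder factorization. The $p = \infty$ endpoint is the pointwise radial Sobolev estimate
\[
|v(r)| \lesssim r^{s - d/q}\, \|v\|_{\dot W^{s,q}(\R^d)}, \qquad r > 0,
\]
while the $p = q$ endpoint is the (unweighted) Hardy inequality
\[
\|r^{-s} v\|_{L^q(\R^d)} \lesssim \|v\|_{\dot W^{s,q}(\R^d)}.
\]
Note that the hypotheses $q \le sq$ and $sq < d$ restrict to $s \ge 1$ and to the sub-critical Sobolev range, which is exactly where both endpoint bounds hold.

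For the pointwise bound, when $s$ is a positive integer I would iterate the fundamental theorem of calculus. For $s=1$, writing $v(r) = -\int_r^\infty v'(\rho)\, d\rho$, splitting the integrand as $|v'(\rho)|\rho^{(d-1)/q} \cdot \rho^{-(d-1)/q}$, and applying Hölder with the dimensional weight $\rho^{d-1}$ gives $|v(r)| \lesssim r^{1 - d/q} \|\nabla v\|_{L^q(\R^d)}$, which is exactly the claimed bound for $s=1$; iterating this procedure handles all integer exponents. For non-integer $s$, I would instead apply a Littlewood-Paley decomposition $v = \sum_{j \in \Z} P_j v$ together with the improved radial Bernstein-type estimate $|P_j v(r)| \lesssim 2^{jd/q}(1 + 2^j r)^{-N} \|P_j v\|_{L^q(\R^d)}$, whose decay in $2^j r$ uses radiality in an essential way; the resulting frequency-weighted sum is then geometric and sums to $r^{s - d/q}\|v\|_{\dot W^{s,q}}$ after invoking the Littlewood-Paley square function characterization of $\dot W^{s,q}$.

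For the $p = q$ endpoint I would invoke the classical Hardy-Rellich inequality $\|r^{-s} v\|_{L^q(\R^d)} \lesssim \|(-\Delta)^{s/2} v\|_{L^q(\R^d)}$, which is valid under $sq < d$. For integer $s$ this follows from repeated integration by parts inside $\int r^{-sq} |v|^q \, dx$, picking up the missing power of $r^{-s}$ one derivative at a time. For fractional $s$ it is equivalent to the $L^q$ boundedness of $r^{-s} \circ (-\Delta)^{-s/2}$, which follows from the Riesz potential representation together with the fact that $|x|^{-sq}$ lies in the Muckenhoupt $A_q$ class precisely when $sq < d$.

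With both endpoints in hand the interpolation is a pointwise factorization: for $q \le p \le \infty$, write $|v|^p = |v|^{p - q}\, |v|^q$ and distribute the radial weight to obtain
\[
\int_0^\infty |v(r)|^p\, r^{(d/q - d/p - s)p + d - 1}\, dr \lesssim \|r^{d/q - s} v\|_{L^\infty}^{p - q} \int_0^\infty |v(r)|^q\, r^{d - sq - 1}\, dr,
\]
since the arithmetic of the exponents collapses the remaining weight to $r^{d - sq - 1}$. Combining this with the two endpoint bounds yields the claim. The main obstacle is the pointwise radial bound at non-integer $s$: for integer $s$ everything is elementary via FTC, but the fractional case genuinely exploits radiality, since non-radial $v \in \dot W^{s,q}$ with $sq < d$ need not be locally bounded, and so one must carefully track radial decay inside the Littlewood-Paley argument.
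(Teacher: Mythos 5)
The paper does not actually prove this lemma --- it is quoted verbatim from \cite{STZ94} with no argument supplied --- so there is no internal proof to compare against; I am evaluating your proposal on its own terms. Your overall architecture (establish the $p=\infty$ and $p=q$ endpoints, then interpolate by writing $|v|^p=|v|^{p-q}|v|^q$ and distributing the weight) is the standard route, and your exponent arithmetic in the interpolation step is correct: the residual weight does collapse to $r^{d-sq-1}$, so the reduction to the two endpoints is sound, as is the FTC/H\"older treatment of the $p=\infty$ endpoint for integer $s$.

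The genuine gap is the ``improved radial Bernstein-type estimate'' $|P_j v(r)|\lesssim 2^{jd/q}(1+2^jr)^{-N}\|P_jv\|_{L^q}$ with $N$ arbitrary. This is false. Testing against a frequency-one radial bump concentrated on the sphere $\{|x|=R\}$ (normalized surface measure mollified by $\check\psi$) shows $|P_0v(R)|/\|P_0v\|_{L^q}\sim R^{-(d-1)/q}$, so the sharp radial gain is $(1+2^jr)^{-(d-1)/q}$, not rapid decay --- this is exactly the frequency-localized Strauss estimate, which at $q=2$ gives the familiar $r^{-(d-1)/2}$. Your high-frequency sum must therefore be redone: with the correct decay one needs $\sum_{2^j\ge 1/r}2^{j(d/q-s)}(2^jr)^{-(d-1)/q}$ to converge, which requires $d/q-s-(d-1)/q<0$, i.e.\ $sq>1$. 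Fortunately, under the standing hypothesis $q\le sq$ the only case with $sq=1$ is $s=q=1$, which is integer and already handled by your FTC argument; for every non-integer $s$ in the admissible range one has $s>1$, hence $sq>1$, and the corrected sum is geometric and yields $r^{s-d/q}$ as desired. So the proof is repairable, but the key auxiliary estimate as you stated it is wrong and the convergence of the high-frequency tail is not free --- it uses the hypothesis $q\le sq$ in an essential way that your write-up hides.

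Two smaller points. First, for the $p=q$ endpoint your mechanism is off: the bound $\||x|^{-s}(-\Delta)^{-s/2}f\|_{L^q}\lesssim\|f\|_{L^q}$ is the diagonal Stein--Weiss inequality (valid for $1<q<\infty$, $0<s<d/q$), not a consequence of $|x|^{-sq}\in A_q$ --- the Riesz potential is not a Calder\'on--Zygmund operator to which $A_q$ theory applies, even though the numerology coincides. Second, both your square-function characterization of $\dot W^{s,q}$ and the Stein--Weiss endpoint fail at $q=1$, which the lemma as stated permits (e.g.\ $q=1$, $s$ fractional); that borderline case needs a separate argument, though it is not the case the paper ever uses.
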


For solutions $\psi(t) \in \E_0$, i.e., those that satisfy $\psi(t, 0) = 0$ and $\psi(t, \infty) = 0$ for all $t \in I$, it is convenient to pass to a $5d$ semi-linear equation via the standard substitution  $r u(t, r):= \psi(t, r)$. The motivation behind this substitution lies with the fact that there is a linear term present in the nonlinearity in~\eqref{an eq}. Indeed,
\ant{
\frac{ \sin 2\psi}{ r^2}  + \frac{  (\psi - \frac{1}{2}\sin2 \psi)( 2 \sin^2 \psi)}{r^4}&= \frac{2 \psi}{r^2} + \frac{ \sin 2 \psi - 2 \psi}{r^2}  + \frac{O( \psi^5)}{r^4}\\
&= \frac{2 \psi}{ r^2} + \frac{O(\psi^3)}{r^2} + \frac{ O(\psi^5)}{r^4}.
}
The presence of the strong repulsive potential term $\frac{2}{r^2}$ indicates that the linearized operator of~\eqref{an eq} has more dispersion than the $3$-dimensional wave equation. In fact, it has the same dispersion as the $5$-dimensional wave equation since the equation for $u(t, r)$ defined by  $\psi = r u$ is given by 
\EQ{ \label{u eq}
&u_{tt} - u_{rr} - \frac{4}{r} u_r + Z_1( \psi) u^3 + Z_2( \psi) u^5 =0,\\
& \vec u(0) = (u_0, u_1), \\
&Z_1( \rho):= \frac{\sin 2 \rho - 2 \rho}{ \rho^3}, \\
&Z_2( \rho):= \frac{(\rho - \sin \rho \cos \rho)(1- \cos2 \rho)}{\rho^5},
}
where $ru_0(r) = \psi_0(r)$ and $ru_1(r) = \psi_1(r)$. We claim that for degree zero data, i.e.,  $\vec \psi(0) \in \E_0$, it suffices to study the Cauchy problem in the $u$-formulation. In fact, for the remainder of the paper we will deal exclusively with $\vec u(t, r)$ in the $5$-dimensional formulation rather than with the equivariant azimuth angle $\psi(t, r)$.  Indeed using Hardy's inequality and the relations 
\ant{
&\psi_r = ru_r + u = r u_r + r^{-1} \psi,\\
&\psi_{rr} = r u_{rr} + 2u_r = r u_{rr} + 2r^{-1} \psi_r - 2 r^{-2} \psi,
}
 one can show that for, e.g., $s= 1, 2$,  the map 
\ant{
 \dot{H}^s_{\textrm{rad}}(\R^5)  \ni u \mapsto \psi = r u \in  \dot{H}^s_{\textrm{rad}}(\R^3)
} 
is an isomorphism and we have 
\EQ{
\|\vec u\|_{\dot{H}^s \times \dot{H}^{s-1}(\R^5)} \simeq \| \vec \psi \|_{\dot{H}^s \times \dot{H}^{s-1}(\R^3)}.
}
In fact the map, 
\ant{
\vec u = (u_0, u_1) \mapsto ( r u_0, r u_1) =: \vec \psi
}
is an isomorphism between the spaces $\HH(\R^5)$ and $\HH( \R^3)$, where 
\ant{ 
\|(u_0, u_1) \|_{\HH(\R^5)} := \|(u_0, u_1)\|_{(\dot{H}^2 \times \dot H^1) \cap (\dot{H}^1 \times L^2)( \R^5)}.
}
We also note that the energy of degree zero data $(\psi_0, \psi_1)$ is controlled by the $\HH(\R^5)$ norm of $(u_0,  u_1)$ defined by  $(ru_0(r),  ru_1(r)) :=( \psi_0, \psi_1)$. Indeed,
\EQ{\label{finite energy in HH}
\E( \vec \psi) & \lesssim\int_{0}^{\infty} \left(\psi_1^2 + (\p_r \psi_0)^2 + \frac{\psi_0^2}{r^2} + \frac{ \psi_0^6}{r^4} \right) \, r^2 \, dr\\
&\lesssim \left(u_1^2 + (\p_ru_0)^2 + \frac{u_0^2}{r^2} + u_0^6 \right) \, r^4 \, dr \lesssim \| (u_0, u_1)\|_{\HH}^2
}
where in the final inequality we have used Hardy's inequality as well as interpolation and Sobolev embedding in $\dot{H}^2 \cap \dot{H}^1(\R^5) \hookrightarrow \dot{H}^{\frac{5}{3}} \hookrightarrow L^{6}( \R^5)$. Therefore
\EQ{\label{HH to E0}
\vec u \in \HH \Rightarrow \vec \psi = (ru_0, ru_1) \in \E_0
}
In what follows we will also require the simple uniform estimates for the functions $Z_1, Z_2$ appearing in the nonlinearity of~\eqref{u eq} as observed in \cite{GNR}, namely, 
\EQ{ \label{Z bounds}
&\abs{Z_1( \rho)} \lesssim \ang{\rho}^{-2},\\
&\abs{ \p_{\rho}^{1+j} Z_1( \rho)} \lesssim \ang{ \rho}^{-3},\\
&\abs{\p_{\rho}^{j} Z_2( \rho)} \lesssim \ang{ \rho}^{-4},
}
which hold for all $j \ge 0$ and with $\ang{\rho} = \sqrt{1 + \rho^2}$. 

We can thus reformulate Theorem~\ref{main} in terms of the $5d$ $u$-formulation as the two results are equivalent.

\begin{thm}\label{u main} Let $( u_0, u_1) \in \HH( \R^5)$. Then, there is a unique solution $\vec u(t) \in \HH$ to~\eqref{u eq} with initial data $\vec u(0) = ( u_0, u_1)$, defined on its maximal interval of existence $0 \in I_{\max} = (T_-, T_+)$. Assume in addition that we have 
\EQ{
\sup_{t \in [0, T_+)} \| \vec u(t) \|_{\HH} < \infty. 
}
Then, in fact $T_+ = + \infty$, i.e., $\vec u(t)$ is globally defined for positive times. Moreover, $\vec u(t)$ scatters to zero in $\HH$ as $t \to  \infty$. The corresponding statement holds for negative times  as well
 \end{thm}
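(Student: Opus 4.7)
The plan is to execute the Kenig-Merle concentration-compactness/rigidity program on the $5$d formulation~\eqref{u eq}, following the scheme announced in the introduction and in parallel with~\cite{KLS}. The assumption $\sup_t \|\vec u(t)\|_\HH<\infty$ plays the role of an a priori critical-norm bound, compensating for the genuine energy supercriticality of the model.

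First, I would set up the small-data theory in $\HH$. Linearizing at $u=0$ yields the free radial wave equation on $\R^{1+5}$, and the $5$d Strichartz estimates combined with the uniform bounds~\eqref{Z bounds} and Hardy-Sobolev embeddings (Lemma~\ref{Hardy}) show the nonlinearity is locally Lipschitz on bounded sets of $\HH$. Standard fixed-point and perturbation arguments then yield local well-posedness in $\HH$, a scattering criterion phrased via a controlling Strichartz norm, and a long-time perturbation lemma. In particular any $\HH$-bounded solution either extends globally and scatters or its Strichartz norm diverges at $T_+$. Assuming Theorem~\ref{u main} fails, a standard minimization produces a critical $\HH$-threshold $A_\ast$ below which all solutions scatter, and a minimal non-scattering sequence $\vec u_n$ of data at that threshold.

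Second, I would establish a Bahouri-Gérard type linear profile decomposition for $\HH$-bounded sequences. Equation~\eqref{u eq} is not scale-invariant, but at small scales the nonlinearity degenerates to the quintic model
\[
v_{tt}-v_{rr}-\tfrac{4}{r}v_r+\tfrac{4}{3}v^5=0
\]
on $\R^{1+5}$, which is precisely $\dot H^2$-critical; this accounts for the two types of nonlinear profiles flagged in the introduction, \emph{Euclidean} profiles with scales $\lambda_n\to 0$ whose nonlinear evolutions converge to solutions of this quintic equation, and \emph{Adkins-Nappi} profiles at unit scale solving~\eqref{u eq} itself. The Euclidean profiles are handled by the conditional scattering theory for the defocusing $\dot H^2$-critical quintic wave equation in $\R^5$, the Adkins-Nappi profiles by the small-data/perturbation theory of the first step. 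A Kenig-Merle style nonlinear perturbation argument combines the two families into an approximate solution for the critical sequence, and minimality forces exactly one non-scattering Adkins-Nappi profile. This is the critical element $\vec u_\ast(t)$, and its forward trajectory is pre-compact in $\HH$ modulo a continuous scale $\lambda(t)>0$.

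Third, I would rule out $\vec u_\ast$ by the channels-of-energy rigidity of~\cite{KLS}. A preliminary step confines $\lambda(t)$ between two positive constants: if $\lambda(t_n)\to 0$ or $\infty$ along a sequence, extracting a further profile from the pre-compact trajectory would yield either a non-trivial compact solution to the defocusing quintic equation (ruled out by the scattering theory in $\R^5$) or a contradiction with the control on the Adkins-Nappi nonlinearity at large scales. Rescaling, one may then assume $\lambda(t)\equiv 1$. The rigidity step combines the exterior-energy estimate for the free radial $5$d wave equation (Proposition~\ref{linear prop}) with the ODE classification of Section~\ref{stat sols}: pre-compactness forces the radiation of $\vec u_\ast$ in every exterior cone $\{r\ge R+|t|\}$ to vanish, the Duhamel formula together with~\eqref{Z bounds} reduces the surviving piece to a stationary profile $\fy$ solving~\eqref{ode} and vanishing at $r=0$, and Lemma~\ref{0 lem} then forces $\fy\equiv 0$. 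Thus $\vec u_\ast\equiv 0$, a contradiction.

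The hardest step is the coupled profile decomposition and nonlinear perturbation: the Euclidean and Adkins-Nappi families have genuinely different scaling behaviors, cubic and quintic terms must be controlled simultaneously in the $\HH$ topology, and orthogonality of the profiles must survive the mixed scaling structure. The rigidity step is also subtle because, in contrast to pure-power nonlinearities, neither a virial nor a Morawetz identity adapted to the full nonlinearity is available, so the information about $\vec u_\ast$ has to be extracted purely from the exterior-energy estimate of~\cite{KLS} combined with the ODE analysis of Section~\ref{stat sols}, a step in which the specific algebraic structure of the Adkins-Nappi stationary equation~\eqref{ode} is essential.
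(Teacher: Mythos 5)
Your proposal is correct and follows essentially the same concentration-compactness/rigidity route as the paper: small-data and perturbation theory in $\HH$, a linear/nonlinear profile decomposition with both Euclidean (small-scale, quintic) and Adkins--Nappi (unit-scale) profiles handled via the conditional scattering theory for the defocusing quintic equation, and a channels-of-energy rigidity argument reducing the critical element to a stationary solution of~\eqref{ode} that is excluded by Lemma~\ref{0 lem}. The only (cosmetic) difference is that you carry a modulation scale $\la(t)$ into the compactness of the critical element and rule out degenerate scales in a preliminary rigidity step, whereas the paper excludes the Euclidean profiles already during the construction of the critical element (Case~2 of Proposition~\ref{crit elem}), so that $K$ is pre-compact in $\HH$ with no modulation at all.
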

\begin{rem}
The statement ``$\vec u(t)$ scatters to zero in $\HH$ as $t \to \pm \infty$" means that there exists a  solution $\vec v_{L}^\pm(t)$ to the free wave equation: 
 \EQ{ \label{free 5d}
 v_{tt} - v_{rr}- \frac{4}{r} v_r  = 0
 }
so that 
\EQ{
\| \vec u(t) - \vec v_{L}^\pm(t)\|_{\HH} \to 0 \mas t \to \pm \infty
}
\end{rem}

\subsection{Small data -- global existence, scattering, and perturbative theory}
An essential ingredient to  the small data theory are Strichartz estimates for the inhomogeneous wave equation in $\R^{1+5}$, which reads
\EQ{\label{lin wave}
&v_{tt}- \Delta v  = F,\\
&\vec v(0) = (f, g). 
}
A free wave will mean a solution to \eqref{lin wave} with $F=0$, and will be denoted by $\vec v(t) = S(t) \vec v(0)$. In what follows we say that $(p, q, \gamma)$ is an admissible triple if 
\begin{align*}
p, q \ge 2 , \quad \frac{1}{p}+ \frac{5}{q} =\frac{5}{2} - \gamma, \quad
 \frac{1}{p} + \frac{2}{q} \le 1 
\end{align*}
We can now state the Strichartz estimates that we will need below. 
\begin{prop}\cite{Kee-Tao, LinS}\label{strich} Let $(p, q, \gamma)$ and $(r, s, \rho)$ be admissible triples. Then any solution $\vec{v}(t)$ to~\eqref{lin wave} satifies
\EQ{
\| \abs{\nabla}^{- \gamma} \nabla v\|_{L^p_tL^q_x} \lesssim \|(f, g) \|_{\dot{H}^1 \times L^2} + \| \abs{\nabla}^{\rho} F\|_{L^{r'}_t L^{s'}_x}.
}
\end{prop}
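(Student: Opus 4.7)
The plan is to invoke the $TT^*$ framework of Keel-Tao \cite{Kee-Tao}, which reduces matters to a dispersive estimate for the half-wave propagator together with a Hardy-Littlewood-Sobolev bound in time. First, by Duhamel's formula, it suffices to prove the homogeneous estimate
\ant{
\||\nabla|^{-\gamma}\nabla S(t)(f, g)\|_{L^p_t L^q_x} \lesssim \|(f, g)\|_{\dot H^1 \times L^2}
}
and its dual over the full admissible range. The retarded inhomogeneous bound $\| |\nabla|^{-\gamma} \nabla \int_0^t S(t-s)(0, F(s))\, ds\|_{L^p_t L^q_x} \lesssim \||\nabla|^\rho F\|_{L^{r'}_t L^{s'}_x}$ then follows by pairing the homogeneous estimate at $(p, q, \gamma)$ with the dual estimate at $(r, s, \rho)$ and applying the Christ-Kiselev lemma, which is permissible because admissibility never puts one simultaneously at the double endpoint $p = r' = 2$.

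For the homogeneous estimate, decompose $\vec v(t) = S(t)(f, g)$ into its two half-wave components $e^{\pm it|\nabla|} h_\pm$ with $h_\pm := \frac{1}{2}(f \pm i|\nabla|^{-1} g)$, reducing matters to
\ant{
\| e^{it|\nabla|} |\nabla|^{-\gamma} \nabla h \|_{L^p_t L^q_x} \lesssim \| h \|_{\dot H^1}
}
under the scaling relation $\frac{1}{p} + \frac{5}{q} = \frac{5}{2} - \gamma$. The dispersive input is the standard $5$-dimensional half-wave bound $\|e^{it|\nabla|} P_k h\|_{L^\infty_x} \lesssim 2^{9k/2} |t|^{-2} \|P_k h\|_{L^1_x}$, obtained by stationary phase on the sphere. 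Interpolating this with the $L^2$ isometry yields an $L^{q'} \to L^q$ fixed-time bound, and the $TT^*$ duality then converts the target Strichartz estimate into an HLS convolution bound in the time variable, which closes whenever $\frac{1}{p} + \frac{2}{q} < 1$.

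The main technical obstacle is the endpoint line $\frac{1}{p} + \frac{2}{q} = 1$ -- in dimension five the Keel-Tao endpoint is $(p, q) = (2, 4)$ -- where the HLS integral just barely fails to converge. At the endpoint one must replace HLS with the bilinear dyadic argument of \cite{Kee-Tao}, decomposing the space-time kernel according to $|t-s| \sim 2^j$ and summing off-diagonal bilinear estimates at slightly shifted exponents (this is the step that goes back to \cite{LinS} in the wave setting and is carried out in full generality by Keel-Tao). Once the endpoint bound is in hand, real interpolation across the admissible region, combined with Christ-Kiselev for the retarded part, yields the stated mixed-norm estimate; the derivative weights $|\nabla|^{-\gamma} \nabla$ and $|\nabla|^\rho$ are propagated through the argument by commuting homogeneous fractional powers of $|\nabla|$ with the free propagator, which acts as an isometry on every $L^2$-based Sobolev space.
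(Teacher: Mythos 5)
The paper does not prove this proposition; it is quoted directly from Keel--Tao and Lindblad--Sogge, and your sketch is precisely the standard $TT^*$/dispersive-estimate argument carried out in those references, so the approach matches what the paper implicitly relies on. Two small corrections: the frequency-localized dispersive bound in $\R^5$ is $\|e^{it|\nabla|}P_k h\|_{L^\infty_x}\lesssim 2^{(d+1)k/2}|t|^{-(d-1)/2}\|P_k h\|_{L^1_x}=2^{3k}|t|^{-2}\|P_k h\|_{L^1_x}$, not $2^{9k/2}|t|^{-2}$ (the wrong power would be inconsistent with the scaling relation $\frac1p+\frac5q=\frac52-\gamma$); and the admissibility condition as stated in the paper \emph{does} permit the double endpoint $p=r=2$ (with $q=s=4$), where Christ--Kiselev fails --- there one instead invokes the retarded endpoint estimate proved directly by the bilinear argument in \cite{Kee-Tao}. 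Neither issue changes the structure of the proof.
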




We can now formulate the local well-posedness theory for~\eqref{u eq}. This was previously established in \cite{GNR}, however here we use a different Strichartz norm for our scattering norm to facilitate the concentration compactness arguments later. We thus revisit the standard argument here.  For a time interval $0 \ni I \subset \R$ , define the norms $S(I)$ and $N(I)$ by
\EQ{
&\|u\|_{S(I)} := \|u\|_{L^3_t(I; L^{\frac{30}{7}} \cap \dot{W}^{1, \frac{30}{7}}(\R^5))},\\
&\|F\|_{N(I)} := \|F\|_{L^{\frac{3}{2}}_t(I; L^{\frac{30}{17}} \cap \dot{W}^{1, \frac{30}{17}}(\R^5))}.
} 
To simplify notation we will often write $X(I):= L^{\infty}_t(I; \HH) \cap S(I)$. 
We also recall the definitions of the homogeneous and inhomogeneous Besov spaces. 
 \ant{
& \|f\|_{\dot{B}^s_{p, q}} := \left( \sum_{j \in \Z} 2^{qsj} \|P_j f\|_{L^p}^q \right)^{\frac{1}{q}} \mif 1\le q < \infty\\
 & \|f\|_{B^s_{p, q}} := \|P_{ \le 0} f\|_{L^p} +  \left( \sum_{j \ge 1} 2^{qsj} \|P_j f\|_{L^p}^q \right)^{\frac{1}{q}} \mif 1\le q < \infty
 }
  and for $q = \infty$, 
  \ant{
 & \|f\|_{\dot{B}^s_{p, \infty}} :=  \sup_{ j \in \Z} 2^{sj} \|P_j f\|_{L^p} \\
 & \|f\|_{B^s_{p, q}} := \|P_{ \le 0} f\|_{L^p} +  \sup_{ j \in \Z} 2^{sj} \|P_j f\|_{L^p}
}
above the $P_k$ denote the usual Littlewood-Paley projections onto frequencies of size $ \abs{ \xi} \simeq 2^k$ and $P_{ \le 0}$ denotes projection onto frequencies $ \abs{ \xi} \lesssim 1$. 

\begin{prop}[Small data theory]\cite[Theorem~$2.2$]{GNR} \label{small data}
Let $\vec u(0) = (u_0, u_1) \in \HH (\R^5)$. Then there is a unique, solution $\vec u(t) \in \HH$ defined on a maximal interval of existence $I_{\max}( \vec u )= (T_-(\vec u), T_+( \vec u))$.  Moreover, for any compact interval $J \subset I_{\max}$ we have 
\ant{ 
\| u\|_{S(J)} < \infty.
}
Moreover, a globally defined solution $\vec u(t)$ for $t\in [0, \infty)$ scatters as $ t \to \infty$ to a free wave, i.e., a solution $\vec u_L(t) \in \HH$ of 
\ant{
\Box u_L =0
}
if and only if  $ \|u \|_{S([0, \infty))}< \infty$. In particular, there exists a constant $\de>0$ so that  
\EQ{ \label{global small}
 \| \vec u(0) \|_{\HH} < \de \Rightarrow  \| u\|_{S(\R)} \lesssim \|\vec u(0) \|_{\HH} \lesssim \de
 }
and hence $\vec u(t)$  scatters to free waves as $t \to \pm \infty$. Finally, we have the standard finite time blow-up criterion: 
\EQ{ \label{ftbuc}
T_+( \vec u)< \infty \Longrightarrow \|u \|_{S([0, T_+( \vec u)))} = + \infty
}
A similar statement holds if $- \infty< T_-( \vec u)$. 
\end{prop}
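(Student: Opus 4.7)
The plan is the standard Strichartz-based fixed-point argument in the norm $X(I):=L^\infty_t(I;\HH)\cap S(I)$, applied to the Duhamel formulation
\[
\vec u(t) = S(t)\vec u(0) - \int_0^t S(t-s)\bigl(0,\,Z_1(\psi)u^3 + Z_2(\psi)u^5\bigr)\,ds.
\]
Combining Proposition~\ref{strich} with a cubic--quintic nonlinear bound of the form
\[
\|Z_1(\psi)u^3 + Z_2(\psi)u^5\|_{N(I)} \;\lesssim\; \|u\|_{S(I)}^2\,\|u\|_{L^\infty_t \HH}\bigl(1 + \|u\|_{L^\infty_t \HH}^3\bigr)
\]
reduces the whole proposition to proving this estimate. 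Since $|Z_1|,|Z_2|\lesssim 1$ uniformly by \eqref{Z bounds}, the $L^{3/2}_t L^{30/17}_x$ bounds on $\|u^3\|$ and $\|u^5\|$ follow from H\"older together with the five-dimensional Sobolev embeddings $\dot W^{1,30/7}(\R^5)\hookrightarrow L^{30}$ and $\dot H^2(\R^5)\hookrightarrow L^{10}$, interpolated with $\dot H^1\hookrightarrow L^{10/3}$ to reach any intermediate Lebesgue index in $[10/3,10]$ on the $L^\infty_t$ slot. For the $\dot W^{1,30/17}$ piece the radial chain rule, using $\psi_r=u+ru_r$ and $ru=\psi$, gives
\[
\partial_r\bigl[Z_i(\psi)u^k\bigr] = Z_i'(\psi)\,u^{k+1} + \bigl[kZ_i(\psi)+\psi Z_i'(\psi)\bigr]u^{k-1}u_r,
\]
and the crucial structural observation is that the bracketed coefficient is uniformly bounded in $\psi$: one computes $3Z_1(\psi)+\psi Z_1'(\psi) = -4\sin^2\psi/\psi^2$, and an analogous identity obtained from $\partial_\psi[\psi^5 Z_2(\psi)]$ handles the quintic. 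This cancels the a priori dangerous $r$-weight and leaves only $|u|^{k+1}$ and $|u|^{k-1}|u_r|$ terms, estimated by exactly the same combinations of Strichartz/Sobolev exponents as before.

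Once the nonlinear estimate is in hand, the contraction step is routine. Difference estimates follow identical lines, with $Z_i(\psi_1)-Z_i(\psi_2)$ handled via the mean value theorem and the same $[kZ_i+\psi Z_i']$ cancellation on $r$-weighted pieces. For small $\|\vec u(0)\|_\HH$, the global Strichartz bound $\|S(\cdot)\vec u(0)\|_{S(\R)}\lesssim\|\vec u(0)\|_\HH$ makes the map a contraction on a small ball in $X(\R)$, giving the global bound \eqref{global small}; for arbitrary data, dominated convergence (absolute continuity of the $L^3_t$ norm) makes $\|S(\cdot)\vec u(0)\|_{S([0,T])}$ arbitrarily small for short $T$, and yields local existence on an interval whose length depends on the profile of $\vec u(0)$. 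Uniqueness and the maximal interval $I_{\max}$ then follow in the standard way. Scattering from $\|u\|_{S([0,\infty))}<\infty$ is extracted by defining $\vec u_L^+(0):=\vec u(0)-\int_0^\infty S(-s)(0,Z_1(\psi)u^3+Z_2(\psi)u^5)\,ds\in\HH$ (finite by the nonlinear estimate on $[0,\infty)$) and noting that $\|\vec u(t)-S(t)\vec u_L^+(0)\|_{\HH}\to 0$ as $t\to\infty$ via Strichartz applied to the tail $[t,\infty)$. The blow-up criterion \eqref{ftbuc} is the standard contrapositive: if $\|u\|_{S([0,T_+))}<\infty$ with $T_+<\infty$, partition $[0,T_+)$ into finitely many sub-intervals on which the $S$-norm lies below the contraction threshold, then run the small-data scheme from a time near $T_+$ to extend the solution past $T_+$, contradicting maximality.

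The main technical obstacle specific to this equation is precisely the algebraic cancellation highlighted above: a naive Leibniz expansion of $\partial_r[Z_i(\psi)u^k]$ produces a term $rZ_i'(\psi)u_ru^k$ whose unweighted $r$-factor admits no direct control in the Strichartz/Sobolev norms at our disposal. Only by combining it with the companion term $kZ_i(\psi)u^{k-1}u_r$ and observing that $kZ_i(\psi)+\psi Z_i'(\psi)$ is a smooth, uniformly bounded function of $\psi$ for both $i=1,2$ does one eliminate the singularity, reducing the nonlinearity to a shape handled by standard tools. Once this observation is in place, the cubic--quintic coexistence causes no further difficulty, since both nonlinearities are at worst critical with respect to $\HH$ and are absorbed by the same pair of Strichartz exponents.
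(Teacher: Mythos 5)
Your proposal is correct and follows essentially the same route as the paper: a contraction/continuity argument in $X(I)=L^\infty_t(I;\HH)\cap S(I)$ based on Proposition~\ref{strich}, with the $N(I)$-norm of the nonlinearity controlled by Sobolev embedding, interpolation, Hardy's inequality, and the chain-rule identity $\p_r[Z_i(\psi)u^k]=Z_i'(\psi)u^{k+1}+[kZ_i(\psi)+\psi Z_i'(\psi)]u^{k-1}u_r$. The only cosmetic differences are your choice of interpolation endpoints for the non-derivative piece and your framing of the bounded coefficient as a cancellation; in the paper the same conclusion follows directly from the decay bounds \eqref{Z bounds}, since $kZ_i(\psi)$ and $\psi Z_i'(\psi)$ are each separately bounded.
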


\begin{proof}
The proof follows from a standard contraction-mapping argument based on the Strichartz estimates of  Proposition~\ref{strich}. For completeness we show how the a priori global estimate~\eqref{global small} is derived. Using Proposition~\ref{strich} on the nonlinear equation for $u$ we have for any time interval $I$, 
\EQ{\label{nonlin strich}
\|u\|_{ S(I)} + \| \vec u(t)\|_{L^{\infty}_t(I, \HH)} \lesssim \|\vec u(0) \|_{\HH} + \|Z_1( \psi) u^3 + Z_2 ( \psi) u^5\|_{N(I)}.
}
Via the usual continuity argument (expanding $I$) it will suffice to control the nonlinearity in terms of higher powers of the norms on the left-hand side. Indeed, using \eqref{Z bounds} where appropriate,  we have 
\ant{
\|Z_1( \psi) u^3\|_{L^{\frac{30}{17}}_x} \le \|u\|_{L^{10}_x}\|u\|_{L^{\frac{30}{7}}_x}^2 \lesssim \|u\|_{\dot{H}^2} \|u\|_{L^{\frac{30}{7}}_x}^2.
}
The second inequality following from the Sobolev embedding $\dot{H}^2( \R^5) \hookrightarrow L^{10}(\R^5)$. Therefore,  
\ant{
\|Z_1( \psi) u^3\|_{L^{\frac{3}{2}}_t(I; L^{\frac{30}{17}}_x)} \lesssim \|u\|_{L^{\infty}_t(I; \dot{H}^2)} \|u\|_{L^3_t(I; L^{\frac{30}{7}}_x)}^2.
}
Next, again using Sobolev embedding we have 
\ant{
\|Z_2( \psi) u^5\|_{L^{\frac{30}{17}}_x} &\lesssim \|u\|^3_{L^{10}_x} \|u\|_{L^{\frac{15}{2}}_x}^2 \lesssim \|u\|_{\dot{H}^2}^3 \|u\|_{\dot{W}^{\frac{1}{2}, \frac{30}{7}}_x}^2\\
& \lesssim   \|u\|_{\dot{H}^2}^3  \|u\|_{ L^{\frac{30}{7}}_x} \|u\|_{ \dot W^{1, \frac{30}{7}}_x}\\
& \lesssim \|u\|_{\dot{H}^2}^3  (\|u\|_{ L^{\frac{30}{7}}_x}^2  + \|u\|_{ \dot W^{1, \frac{30}{7}}_x}^2),
}
where the second to last line follows from interpolation, i.e., $$ \dot{W}^{\frac{1}{2}, \frac{30}{7}} = ( L^{\frac{30}{7}}, \dot{W}^{1, \frac{30}{7}})_{\frac{1}{2}}.$$ 
As before we then have 
\ant{
\|Z_2( \psi) u^5\|_{L^{\frac{3}{2}}_t(I; L^{\frac{30}{17}}_x)} \lesssim \| u\|_{L^{\infty}_t(I; \dot{H}^2)}^3 \|u\|_{S(I)}^2.
}
Next we control the $\dot{W}^{1, \frac{30}{17}}$ norm of the nonlinearity. We have 
\ant{
\|Z_1(ru)u^3 + Z_2(ru)u^5\|_{\dot{W}^{1, \frac{30}{17}}} \simeq \| \p_r (Z_1(ru)u^3 + Z_2(ru)u^5)\|_{L^{\frac{30}{17}}}
}
We begin with the quintic term. Note  that 
\ant{
\p_r(Z_2(ru)u^5) = u^4 u_r(5 Z_2( \psi) + \psi Z_2'(\psi)) + u^4  \frac{u}{r} \psi Z_2'( \psi)}
Using~\eqref{Z bounds}, Hardy's inequality, and the Sobolev Embedding $\dot{H}^2(\R^5) \hookrightarrow \dot W^{1, \frac{10}{3}}(\R^5)$, we obtain 
\ant{
\|Z_2( ru)u^5 \|_{\dot{W}^{1, \frac{30}{17}}} &\lesssim \|u^4 u_r\|_{L^{\frac{30}{17}}} + \|u^4 \frac{u}{r}\|_{L^{\frac{30}{17}}} \\
& \lesssim \|u^4\|_{L^{\frac{15}{4}}}( \|u_r\|_{L^{\frac{10}{3}}} + \|r^{-1} u\|_{L^{\frac{10}{3}}})\\
&\lesssim \|u\|^4_{L^{15}} \|u\|_{\dot{H}^2}
}
Next, we combine the embeddings $\dot B^{\frac{5}{6}}_{\frac{30}{7}, 1} \hookrightarrow L^{15}$ and $ \dot{H}^2\hookrightarrow \dot{W}^{1, \frac{10}{3}} \hookrightarrow \dot{B}^{\frac{2}{3}}_{\frac{30}{7}, \infty}$ with the interpolation $$\dot B^{\frac{5}{6}}_{\frac{30}{7}, 1}  = (\dot B^1_{\frac{30}{7}, \infty}, \dot{B}^{\frac{2}{3}}_{\frac{30}{7}, \infty})_{\frac{1}{2}, 1},$$ to obtain
\ant{
\|u\|_{L^{15}} &\lesssim \|u\|_{\dot B^{\frac{5}{6}}_{\frac{30}{7}, 1}} \lesssim \|u\|^{\frac{1}{2}}_{\dot B^1_{\frac{30}{7}, \infty}} \|u\|^{\frac{1}{2}}_{\dot{B}^{\frac{2}{3}}_{\frac{30}{7}, \infty}} \\
&\lesssim \|u\|^{\frac{1}{2}}_{\dot W^{1, \frac{30}{7}}} \|u\|^{\frac{1}{2}}_{\dot{H}^2}.
}
Therefore we can conclude that 
\ant{
\|Z_2( ru)u^5 \|_{L^{\frac{3}{2}}_t(I; \dot{W}^{1, \frac{30}{17}}_x)}  &\lesssim \|u\|_{L^{\infty}_t(I;  \dot{H}^2_x)}^3 \|u\|_{L^{3}_t(I; \dot{W}^{1, \frac{30}{7}}_x)}^2\\
&\lesssim \|u\|_{L^{\infty}_t(I; \HH)}^3 \|u\|_{S(I)}^2 . 
}
Lastly we estimate the subcritical cubic term $Z_1(ru) u^3$. Proceeding as above we have 
\ant{
\|Z_1(ru) u^3\|_{\dot{W}^{1, \frac{30}{17}}} &\lesssim \|u^2\|_{L^{3}}\|u_r\|_{L^{\frac{30}{7}}} \\
&\lesssim \|u\|_{L^{10}} \|u\|_{L^{\frac{30}{7}}} \|u\|_{\dot{W}^{1, \frac{30}{7}}} \\
& \lesssim \|u\|_{\dot{H}^2}( \|u\|_{L^{\frac{30}{7}}}^2 + \|u\|_{\dot{W}^{1, \frac{30}{7}}}^2),
}
which implies 
\ant{
\|Z_1( ru)u^3 \|_{L^{\frac{3}{2}}_t(I; \dot{W}^{1, \frac{30}{17}}_x)}  \lesssim \|u\|_{L^{\infty}_t(I; \HH)} \|u\|_{S(I)}^2 . 
}
Putting this all together we have
\ant{
\|Z_1(ru)u^3 + Z_2(ru)u^5\|_{N(I)} \lec \left(\| u\|_{L^{\infty}_t(I; \HH)} + \| u\|_{L^{\infty}_t(I; \HH)}^3\right) \|u\|_{S(I)}^2.
}
Defining the space $X(I):= L^{\infty}_t(I; \HH) \cap S(I)$, and plugging the above estimate into \eqref{nonlin strich} we obtain the estimate
\ant{
\|u\|_{X(I)} \lec \| \vec u(0)\|_{\HH} + \|u\|_{X(I)}^3 + \|u\|_{X(I)}^5
} 
which is enough to finish the proof after the usual application of a continuity argument.
\end{proof}

\begin{lem}[Perturbation Lemma] \label{perturbation}
There are continuous functions $$\eps_0,C_0:(0,\I)\to(0,\I)$$ such that the following holds:
Let $I\subset \R$ be an open interval (possibly unbounded), $\vec u, \vec v\in C(I; \HH)$  satisfying for some $A>0$
\EQ{\nn
 \|\vec v\|_{L^\infty(I;\HH)} +   \|v\|_{S(I)} & \le A \\
 \|\glei(u)\|_{N(I)}
   + \|\glei(v)\|_{N(I)} + \|w_0\|_{S(I)} &\le \eps \le \eps_0(A),
   }
where $\glei(u):=\Box u+Z_1(ru)u^3+Z_2(ru)u^5$ in the sense of distributions, and $\vec w_0(t):=S(t-t_0)(\vec u-\vec v)(t_0)$ with $t_0\in I$ arbitrary but fixed.  Then
\EQ{ \nn
  \|\vec u-\vec v-\vec w_0\|_{L^\I_t(I;\HH)}+\|u-v\|_{S(I)} \le C_0(A)\eps.}
  In particular,  $\|u\|_{S(I)}<\I$.
\end{lem}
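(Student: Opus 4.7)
The strategy is the standard subdivision-continuity-iteration argument for perturbation lemmas of Kenig-Merle type \cite{KM06, KM08}. Write $\mathcal{N}(u) := Z_1(ru)u^3 + Z_2(ru)u^5$ for the nonlinearity, set $w := u - v$ and $\tilde w := w - w_0$, so that $\vec{\tilde w}(t_0) = 0$ and
\[
\Box \tilde w = \glei(u) - \glei(v) + \mathcal{N}(v) - \mathcal{N}(u)
\]
in the sense of distributions. The goal is to show $\|\tilde w\|_{X(I)} \leq C_0(A)\eps$; this yields both assertions of the lemma since $\|u-v\|_{S(I)} \leq \|\tilde w\|_{S(I)} + \|w_0\|_{S(I)}$.

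The central technical step is a difference-of-nonlinearities estimate of the form
\[
\|\mathcal{N}(v+h) - \mathcal{N}(v)\|_{N(J)} \leq \Phi\bigl(\|v\|_{L^\infty_t \HH}, \|h\|_{L^\infty_t \HH}\bigr) \bigl(\|v\|_{S(J)} + \|h\|_{S(J)}\bigr)^2 \|h\|_{X(J)},
\]
valid for any subinterval $J \subset I$, with $\Phi$ a continuous increasing function. The proof mirrors the nonlinear bounds in Proposition~\ref{small data}: one polynomially expands $(v+h)^3$ and $(v+h)^5$ and Taylor expands $Z_j(r(v+h)) - Z_j(rv) = rh\int_0^1 Z_j'(r(v+sh))\,ds$, using the pointwise bounds \eqref{Z bounds} on $Z_j$ and their derivatives. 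Every resulting term is then handled by the Hardy-Sobolev-H\"older-Besov chain used to close the small-data estimate, the key point being that at least two factors of $v$ or $h$ may always be placed in the $L^3_t L^{30/7}_x$-type piece of the $S$-norm, producing the prefactor $(\|v\|_{S(J)} + \|h\|_{S(J)})^2$.

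Given this estimate, partition $I$ into consecutive subintervals $I_1, \ldots, I_N$, with $t_0$ the left endpoint of $I_1$, such that $\|v\|_{S(I_j)} \leq \eta$ for each $j$, where $\eta = \eta(A)$ is chosen small enough that the prefactor $C\,\Phi(\cdots)\cdot(3\eta)^2$ is at most $\tfrac{1}{2}$; this is possible with $N = N(A)$ intervals because $\|v\|_{S(I)} \leq A$. On $I_1$, Proposition~\ref{strich} gives
\[
\|\tilde w\|_{X(I_1)} \leq C\bigl(\|\glei(u)\|_{N(I_1)} + \|\glei(v)\|_{N(I_1)}\bigr) + C\|\mathcal{N}(u)-\mathcal{N}(v)\|_{N(I_1)},
\]
and applying the difference estimate with $h = w_0 + \tilde w$ (so that $\|h\|_{S(I_1)} \leq \eps + \|\tilde w\|_{S(I_1)}$), a standard continuity argument closes to give $\|\tilde w\|_{X(I_1)} \leq C_1(A)\eps$. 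One then iterates on $I_2, \ldots, I_N$: at each subsequent step the initial datum $\vec{\tilde w}(t_{j-1})$ is no longer zero, but Strichartz merely picks up an extra term $C\|\vec{\tilde w}(t_{j-1})\|_\HH \leq CC_{j-1}(A)\eps$, and the continuity argument still closes with a larger $C_j(A)$. After $N$ steps one obtains $\|\tilde w\|_{X(I)} \leq C_0(A)\eps$ for a sufficiently large constant $C_0(A)$. The threshold $\eps_0(A)$ is chosen so that $C_0(A)\eps_0(A) \leq \eta/2$, ensuring that the smallness hypothesis required by the continuity argument indeed holds at every stage.

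The principal obstacle is the difference-of-nonlinearities estimate. The coefficients $Z_1, Z_2$ depend on $\psi = ru$ rather than on $u$, so the Taylor remainder produces a factor of $rh$ that must be traded against derivatives via Hardy's inequality (Lemma~\ref{Hardy}) to be compatible with the Strichartz spaces. Moreover, because the quintic term is energy supercritical in five dimensions, one cannot avoid interpolating between the two Besov spaces $\dot B^{1}_{30/7,\infty}$ and $\dot B^{2/3}_{30/7,\infty}$ (exactly as in the proof of Proposition~\ref{small data}) to recover a factor of $\|h\|_{X(J)}$ that includes the $\HH$-norm of $h$. These maneuvers, while mechanical, are the most delicate piece of the proof.
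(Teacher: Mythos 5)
Your proposal is correct and follows essentially the same route as the paper: subdivide $I$ into $N(A)$ intervals on which $v$ has small $S$-norm, estimate the difference of nonlinearities in $N(I_j)$ by the same Hardy--Sobolev--Besov machinery as in the small-data proof (with at least two factors placed in the Strichartz piece), close via a continuity argument on each piece, and induct across the pieces while tracking the accumulated data at the interval endpoints. The only difference is bookkeeping — you propagate $\tilde w = w - w_0$ with vanishing data at $t_0$, whereas the paper tracks $w$ together with the free evolutions $S(t-t_j)\vec w(t_j)$ from each partition point — and the two are equivalent.
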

We also will use the following easy reformulation of the perturbation lemma, which we state as a corollary.

\begin{cor}\label{pert cor}
There are continuous functions $\eps_0,C_0:(0,\I)\to(0,\I)$ such that the following holds:
Let $I\subset \R$ be an open interval (possibly unbounded), $\vec v\in L^{\infty}(I; \HH)$  satisfying for some $A>0$
\begin{equation}\label{bounded-approximation-norms}
\|v\|_{S(I)}+\|\vec{v}\|_{L^{\infty}(I; \HH)} \le A< \infty
\end{equation}
In addition, assume that  
\EQ{\label{close-data}
\| \vec u(t_0)-\vec v(t_0)\|_{\HH}+\|\glei(v)\|_{N( I)} \leq \e \le \e_0(A),
}
for some $t_0\in I,$ where $\glei(u):=\Box u + Z_1(ru)u^3+Z_2(ru)u^5$ in the sense of distributions. Then there is a unique solution $\vec u \in L^{\infty}(I;  \HH)$ of
\begin{align*}
&\Box u+ Z_{1}(ru) u^3+ Z_2(ru)u^5 =0 \\
 &\vec u(t_0)= (u_0, u_1)
\end{align*}
Moreover, $\vec u$ satisfies 
\EQ{ \label{new u bounds}
\|u-v\|_{S( I)}\le C(A)\e.
}
In particular, $\|u\|_{S( I)}< \infty$.
\end{cor}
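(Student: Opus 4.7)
The plan is to reduce Corollary~\ref{pert cor} to Lemma~\ref{perturbation} by noting that an exact solution $\vec u$ satisfies $\glei(u) \equiv 0$, so the term $\|\glei(u)\|_{N(I)}$ appearing in the hypotheses of Lemma~\ref{perturbation} drops out for free. The remaining ingredients are (a) producing $\vec u$ as a solution of the Cauchy problem with the specified data on all of $I$, and (b) estimating $\|w_0\|_{S(I)}$, where $\vec w_0(t) := S(t-t_0)(\vec u-\vec v)(t_0)$, in terms of the small parameter $\e$.

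First, by the small data local well-posedness theory in Proposition~\ref{small data}, the Cauchy problem with data $(u_0,u_1)$ posed at $t_0$ admits a unique solution $\vec u(t) \in \HH$ on its maximal interval of existence $I_{\max}(\vec u) = (T_-(\vec u), T_+(\vec u)) \ni t_0$. I would then verify the hypotheses of Lemma~\ref{perturbation} on any compact subinterval $J \subset I \cap I_{\max}(\vec u)$ containing $t_0$: (i) $\vec v$ satisfies the required $S(J)$ and $L^\infty(J;\HH)$ bounds by \eqref{bounded-approximation-norms}; (ii) $\glei(u) \equiv 0$ on $J$; (iii) $\|\glei(v)\|_{N(J)} \le \|\glei(v)\|_{N(I)} \le \e$; (iv) applying the Strichartz estimates in Proposition~\ref{strich} to the free wave $\vec w_0$ gives
\ant{
\|w_0\|_{S(J)} \le \|w_0\|_{S(\R)} \lesssim \|\vec u(t_0) - \vec v(t_0)\|_{\HH} \le \e.
}
After shrinking $\e_0(A)$ as necessary to absorb the Strichartz constant, Lemma~\ref{perturbation} applies and yields $\|u-v\|_{S(J)} \le C_0(A)\e$, uniformly in the choice of $J$.

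The key step is now to promote $\vec u$ to a solution on all of $I$. Suppose for contradiction that $T_+(\vec u) \le \sup I$. Taking $J = [t_0, T_n]$ with $T_n \nearrow T_+(\vec u)$ in the estimate above and passing to the limit via monotone convergence of the $L^3_t$-norm gives
\ant{
\|u\|_{S([t_0, T_+(\vec u)))} \le \|v\|_{S(I)} + C_0(A)\e \le A + C_0(A)\e < \infty,
}
which directly contradicts the finite time blow-up criterion \eqref{ftbuc}. An identical argument on the interval $(T_-(\vec u), t_0]$ forces $T_-(\vec u) \le \inf I$, so $I \subset I_{\max}(\vec u)$. Applying Lemma~\ref{perturbation} on $I$ itself then delivers \eqref{new u bounds} together with $\|u\|_{S(I)} < \infty$; uniqueness is inherited from Proposition~\ref{small data}.

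The only real technical obstacle is bookkeeping of the multiplicative constants: the threshold $\e_0(A)$ for this corollary must be chosen small enough in terms of both the Strichartz constant appearing in step (iv) and the threshold $\e_0(A)$ in Lemma~\ref{perturbation}, so that the perturbation hypotheses are genuinely satisfied. Once this is in place, the extension to all of $I$ via the blow-up criterion \eqref{ftbuc} is essentially automatic.
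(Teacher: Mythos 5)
Your argument is correct and is exactly the intended derivation: the paper states this corollary as an "easy reformulation" of Lemma~\ref{perturbation} without proof, and your reduction (using $\glei(u)\equiv 0$, Strichartz to bound $\|w_0\|_{S}$ by $\|\vec u(t_0)-\vec v(t_0)\|_{\HH}$, and the blow-up criterion \eqref{ftbuc} to extend $u$ to all of $I$) is the standard way to fill it in. The bookkeeping points you flag (absorbing the Strichartz constant into $\e_0(A)$, exhausting $I\cap I_{\max}$ by compact subintervals) are handled appropriately.
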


\begin{proof}[Proof of the Lemma~\ref{perturbation}]  Let $X(I):= L^{\infty}_t( I; \HH) \cap S(I)$ and set 
\EQ{\nn
 &w:=u-v, \\
 &e:=\Box (u-v) +Z_1(ru)u^3 + Z_2(ru)u^5-Z_1(rv)v^3-Z_2(rv)v^5 = \glei(u) - \glei(v).}
There is a partition of the right half of $I$ as follows, where $\delta_{0}>0$ is a small
absolute constant which will be determined below:
\EQ{\nn
 \pt t_0<t_1<\cdots<t_n\le \I,\pq I_j=(t_j,t_{j+1}),\pq I\cap(t_0,\I)=(t_0,t_n),
 \pr \|v\|_{X(I_j)} \le \de_{0} \pq(j=0,\dots,n-1), \pq n\le C(A,\de_{0}).}
We omit the estimate on $I\cap(-\I,t_0)$ since it is the same by symmetry.
Let $\vec w_j(t):=S(t-t_j)\vec w(t_j)$ for all $0\le j <n$.  Then Duhamel's formula gives
\begin{multline}\label{w form}
\vec w(t)- \vec w_{0}(t)= \int_{t_0}^{t} S(t-s) (0,(e- (v+w)^{3}Z_1(r(v+w))+\\ + v^3Z_1(rv) -(v+w)^5Z_2(r(v+w)) + v^{5}Z_2(rv))(s)\, ds
\end{multline}
which implies that, for some absolute constant $C_{1}\ge1$,
\EQ{ \label{eq:ww0}
 \| w-w_{0}\|_{X(I_0)} &\lec \\
& \quad  \|v^3Z_1(rv)-(v+ w)^3Z_1(r(v+w))+\\
 &\quad+ v^5Z_2(r(v+w))-(v+w)^5Z_2(r(z+w)) +e\|_{N(I_0)}
 \\ &\le C_{1} ( \de_0^2 +\de_{0}^{4}+ \|w\|_{X(I_0)}^2+\| w\|_{X(I_0)}^{4})\| w\|_{X(I_0)}+C_{1}\eps
}
To estimate the differences involving the functions $Z_1, Z_2$ we use its smoothness along with the fact that by radiallity, $ru$ and $rv$ are bounded point-wise by the $L^{\infty}(I;\HH)$ norms of $u$ and $v$, respectively, which in turn are bounded by $A$, by assumption. Note that $\|w\|_{X(I_{0})}<\infty$ as long as $I_{0}$ is a finite interval. If $I_{0}$ is half-infinite, then we first replace it with an interval of the form $[t_{0},N)$, and  let $N\to\I$ after performing the estimates which are
uniform in~$N$.  Now we assume that $C_{1}( \de_0^2+ \delta_{0}^{4})\le \frac14$ and fix $\delta_{0}$ in this fashion.
By way of the continuity method (which refers to using that the $S$-norm is continuous in  the upper endpoint of $I_{0}$),
\eqref{eq:ww0} implies that $\| w\|_{X(I_{0})}\le 8C_{1}\eps$.
Furthermore, Duhamel's formula implies  that
\begin{multline}\nn
\vec w_{1}(t)- \vec w_{0}(t)= \int_{t_{0}}^{t_{1}} S(t-s) (0,(e- (v+w)^{3}Z_1(r(v+w))+ \\+ v^3Z_1(rv) -(v+w)^5Z_2(r(v+w)) +v^{5}Z_2(rv))(s)\, ds
\end{multline}
whence also
\begin{multline}\label{eq:w1w0}
\| w_{1}-w_{0}\|_{X(\R)} \lec \\
 \| (e- (v+w)^{3}Z_1(r(v+w)) + v^3Z_1(rv) -(v+w)^5Z_2(r(v+w)) +v^{5}Z_2(rv))(s)\|_{N(I_1)}
\end{multline}
which is estimated as in~\eqref{eq:ww0}.  We conclude that $ \| w_{1}\|_{X(\R)}\le 8C_{1}\eps$.
In a similar fashion one verifies that for all $0\le j<n$
\EQ{ \label{est S'}
 &\| w- w_j\|_{X(I_j)} + \| w_{j+1}-w_j\|_{X(\R)}\\
& \lec  \|  e- (v+w)^{3}Z_1(r(v+w)) + v^3Z_1(rv) -(v+w)^5Z_2(r(v+w)) +v^{5}Z_2(rv)  \|_{N(I_j)}\\
 &\le C_{1} ( \de_0^2+ \de_{0}^{4}+\|w\|_{X(I_j)}^2 + \| w\|_{X(I_j)}^{4})\| w\|_{X(I_j)}+C_{1}\eps}
 where $C_{1}\ge1$ is as above.
Inducting in $j$ one obtains that
 \EQ{\nn
 \| w\|_{X(I_{j})} + \| w_{j}\|_{X(\R)} \le C(j)\, \eps\quad \forall \; 1\le j<n
 }
 This requires that  $\eps<\eps_{0}(n)$ which can be done provided $\eps_0(A)$ is chosen small enough.
\end{proof}


\subsection{Small scale approximation}\label{small scale}
Although solutions to \eqref{u eq} are not invariant under any rescaling, a crucial ingredient in the concentration compactness argument in the next section will be the fact that for data which concentrate at very small scales, solutions to \eqref{u eq} are well approximated by solutions to an equation which is scaling invariant, namely 
\EQ{\label{defoc v eq}
& v_{tt} -v_{rr} - \frac{4}{r} v_r  +\frac{4}{3} v^5=0\\
&\vec v(0) = (v_0, v_1) \in \dot{H}^2 \times  \dot{H}^1( \R^5)
} 
The above is a \textit{de-focusing} quintic wave equation in $\R^{1+5}$, and we will be considering initial data $\vec v(0) \in \dot{H}^2 \times \dot H^1$. Note that we have included the constant $\frac{4}{3}$ in front of the quintic nonlinearity due the the Taylor expansion 
\ant{
 Z_2( \rho) = \frac{4}{3} + O( \rho^2)
 } 
 where $Z_2( \rho)$ is as in~\eqref{u eq}. The reason for this will become apparent below. We observe that if $\vec v(t)$ solves \eqref{defoc v eq} then for each $\la>0$ so does $\vec v_{\la}(t)$, which is defined as 
\EQ{
\vec v_{\la}(t, r) := \left( \frac{1}{ (\la)^{\frac{1}{2}}} \, v(t/ \la, r/ \la) , \, \frac{1}{ ( \la)^{\frac{3}{2}}} \, v_t ( t/ \la, r/ \la) \right).
}
A natural space in which to consider Cauchy data for \eqref{defoc v eq} is the homogeneous Sobolev space $\dot{H}^2 \times \dot H^1(\R^5)$ as this norm is invariant under the same scaling as the equation: 
\ant{
\|\vec v_{\la} \|_{ \dot{H}^2 \times \dot H^1} = \|\vec v \|_{ \dot{H}^2 \times \dot H^1},
}
which is why $\dot{H}^2 \times \dot H^1(\R^5)$ is referred to as the \textit{critical} norm for this problem. The main result we will need concerning~\eqref{defoc v eq} is the following conditional global well-posedness and scattering result proved in~\cite{KM11b}. 
\begin{thm} \cite[Theorem~$4.1$]{KM11b}  \label{euc thm}
Let $\vec v(0) = (v_0, v_1) \in  \HH(\R^5)$. Then there exists $T_-, T_+>0$ and a unique solution $\vec v(t) \in \HH$ to~\eqref{defoc v eq} defined on its maximal interval of existence $I_{\max} = (-T_-, T_+)$. Moreover, if we assume that 
\EQ{ 
\| \vec v \|_{L^{\infty}_t(I_{\max}; \HH)} < \infty,
}
then in fact $\vec v(t)$ is globally defined, i.e., $I_{\max} = \R$ and $\vec v(t)$  scatters to zero as $t \to \pm \infty$. 
\end{thm}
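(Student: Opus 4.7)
The plan is to carry out the concentration--compactness / rigidity scheme of Kenig--Merle in this defocusing energy-supercritical setting, noting that $\dot{H}^2 \times \dot{H}^1$ is the scale-invariant space for \eqref{defoc v eq}. First I would establish the small-data and perturbation theory in the critical space using Strichartz estimates for the 5d wave equation together with a standard contraction argument; this gives local well-posedness, small-data scattering, a long-time perturbation lemma, and the blow-up criterion characterizing failure of global existence/scattering by divergence of a scale-invariant scattering norm. These are direct analogues of Proposition \ref{small data} and Lemma \ref{perturbation}, with the cubic term removed and $Z_2$ replaced by the constant $4/3$.

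Next, assuming the theorem fails, set
\[
A_c := \inf\bigl\{A > 0 : \exists \, \vec v \text{ with } \sup_{t \in I_{\max}} \|\vec v(t)\|_{\dot{H}^2 \times \dot{H}^1} \le A \text{ not scattering}\bigr\} < \infty.
\]
A Bahouri--Gérard profile decomposition for the free 5d wave evolution at the $\dot{H}^2 \times \dot{H}^1$ level, combined with the perturbation lemma and a Pythagorean orthogonality of critical norms along the profiles, produces a minimal non-scattering solution $\vec v_*$ (the critical element) whose rescaled trajectory is pre-compact in $\dot{H}^2 \times \dot{H}^1$ modulo the symmetries of the equation. Finite speed of propagation rules out the spatial translation parameter, leaving only a continuous modulation $\lambda(t) > 0$ to track, and placing the compact orbit $K$ at the origin.

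The rigidity step --- showing $\vec v_* \equiv 0$ --- is the main obstacle. The defocusing structure provides the formally conserved energy
\[
E_0(\vec v) = \int_{\R^5}\Bigl(\tfrac{1}{2}|v_t|^2 + \tfrac{1}{2}|\nabla v|^2 + \tfrac{2}{9} v^6\Bigr)\, dx
\]
together with the standard Morawetz identity $\int_I \!\int_{\R^5} |v|^6/|x|\, dx\, dt \lesssim E_0(\vec v)$; however, both control sub-critical $\dot{H}^1 \times L^2$ quantities, whereas $\vec v_*$ is a priori only bounded in the super-critical space $\dot{H}^2 \times \dot{H}^1$. The crux is a double-Duhamel / frequency-localization argument: using pre-compactness of the orbit in \emph{both} time directions, the forward and backward Duhamel representations of high-frequency pieces cancel in the limit, propagating regularity downward to establish $E_0(\vec v_*) < \infty$. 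Once this a priori energy bound is in hand, the Morawetz estimate combined with compactness of $K$ sharply constrains $\lambda(t)$ and rules out each of the standard scenarios (self-similar blow-up, low-to-high frequency cascade, and the quasi-soliton case), forcing $\vec v_* \equiv 0$ and contradicting the construction.

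I would expect the double-Duhamel regularity propagation to be the most demanding step, requiring sharp dispersive estimates for the 5d free wave kernel and a quantitative iteration exploiting the compactness of $K$. Once the a priori energy bound is available, the Morawetz-based rigidity follows a template which is by now standard in the literature on defocusing super-critical wave and Schrödinger equations.
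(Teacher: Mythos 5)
This theorem is not proved in the paper at all: it is quoted from Kenig--Merle \cite{KM11b}, so the relevant comparison is with the proof given there. Your outline follows the Killip--Visan route (double-Duhamel propagation of regularity to obtain a finite-energy critical element, then Morawetz plus the self-similar/cascade/soliton trichotomy), whereas Kenig and Merle prove the result by the ``channels of energy'' rigidity: after the standard concentration-compactness construction of a critical element with trajectory pre-compact modulo scaling, they apply the exterior energy estimate for the free radial wave equation in odd dimensions (essentially the same Proposition~\ref{linear prop} used in Section~\ref{rigidity} of the present paper) to derive sharp spatial asymptotics of the critical element and conclude that it vanishes, with no recourse to Morawetz identities and no need to place the solution in the energy space first. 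The first half of your argument --- critical small-data and perturbation theory, profile decomposition in $\dot H^2\times\dot H^1$, extraction and compactness of the critical element --- coincides with theirs; note only that for radial data the translation parameter is absent from the profile decomposition from the outset, rather than being ``ruled out by finite speed of propagation.''

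The caveat is that the step you yourself identify as the most demanding, namely the double-Duhamel argument showing $E_0(\vec v_*)<\infty$, is asserted rather than carried out, and it is exactly where the dimension- and power-specific numerology enters. For the wave equation the double-Duhamel trick is considerably more delicate than for Schr\"odinger (the half-wave propagators do not give the same absolute convergence of the doubly-iterated time integral), and the Killip--Visan implementation in the supercritical wave setting rests on a nontrivial bootstrap of extra decay and regularity for the almost periodic solution that would have to be redone for the quintic nonlinearity in $\R^5$ with $s_c=2$. As written, your proposal is a plausible program rather than a proof; the channels-of-energy argument of \cite{KM11b} is the one that has actually been completed for this equation, and it is also the more economical route in the present context, since the requisite exterior energy estimate in $\R^5$ is already at hand in Proposition~\ref{linear prop}.
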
 
\begin{rem} In~\cite{KM11b} the above theorem is stated with the space $\dot{H}^2 \times \dot{H}^1$ instead of in $\HH(\R^5)$ as we have stated it above.  However, this makes no difference in the result as the defocusing energy guarantees that the $\dot{H}^1 \times L^2$ is uniformly bounded by the energy on any interval $I$ on which the solution exists. Hence the $\HH$ norm is bounded if and only if the solution has finite energy and the $\dot{H}^2 \times \dot{H}^1$ norm is bounded. 
\end{rem}

In the remainder of this subsection we prove the following approximation result, which shows that solutions to the  Euclidean equation,~\eqref{defoc v eq}, serve as a good approximations to solutions to~\eqref{u eq} with initial data concentrating at very small scales.

\begin{prop}\label{small scale prop} Let $(f, g) \in \HH$ and denote by $\vec v(t)$ the solution to~\eqref{defoc v eq} with initial data $(f, g)$ defined on an open interval  $J= (-T_1, T_2)$. Assume in addition that 
\EQ{
\|v\|_{S(J)}+ \| \vec v \|_{L^{\infty}_t(J;  \HH)}  < \infty.
}
Let $\{ \la_n\} \subset (0, \infty)$ be any sequence with $\la_n \to 0$ as $n \to \infty$.  Consider the rescaled initial data 
\EQ{
(f_{n}, g_{n}): =  \left ( \la_n^{-\frac{1}{2}} f( r/ \la_n), \la_n^{-\frac{3}{2}} g( r/ \la_n)\right)
}
and denote by $ \vec v_{n}(t, r)$ the corresponding rescaled solution to~\eqref{defoc v eq}. Then, there exists $N_0$ large enough so that for all $n \ge N_0$ there exists a unique solution $ \vec u_{n}(t, r)$ to~\eqref{u eq} with initial data $(f_{n}, g_{n})$ defined on the interval $J_{n} := (-\la_nT_1, \la_nT_2)$. Moreover, $\vec u_{n}$ satisfies 
\EQ{
 \|  u_{n} - v_{n}\|_{S(J_{n})}  \to 0 \mas n  \to \infty.
 } 
 In particular, $\| u_{n}\|_{S(J_{n})}< \infty$. 
\end{prop}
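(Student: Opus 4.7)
The plan is to apply Corollary~\ref{pert cor} with the rescaled Euclidean solutions $\vec v_n$ serving as approximate solutions to~\eqref{u eq}, so that the desired Adkins--Nappi solution $\vec u_n$ is produced by the corollary itself. Since $\vec u_n(0) = (f_n, g_n) = \vec v_n(0)$ by construction, the initial-data error in~\eqref{close-data} vanishes, and it remains to check that (i) $\|v_n\|_{S(J_n)} + \|\vec v_n\|_{L^{\infty}_t(J_n; \HH)} \le A$ uniformly in $n$, and that (ii) $\|\glei(v_n)\|_{N(J_n)} \to 0$ as $n \to \infty$. Condition (i) is a direct scaling computation: the rescaling $v_n(t, r) = \la_n^{-1/2} v(t/\la_n, r/\la_n)$ leaves the $\dot H^2 \times \dot H^1$ and the $L^3_t \dot W^{1, 30/7}_x$ components exactly invariant, while the $\dot H^1 \times L^2$ and $L^3_t L^{30/7}_x$ components scale by strictly positive powers of $\la_n$ and thus shrink to zero; the uniform bound is therefore $A \simeq \|v\|_{S(J)} + \|\vec v\|_{L^\infty_t(J; \HH)} < \infty$.

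For condition (ii), since $v$, and hence by scale invariance also $v_n$, solves $\Box v + \frac{4}{3} v^5 = 0$, one has
\[
\glei(v_n) = Z_1(rv_n)\, v_n^3 + \bigl( Z_2(rv_n) - \tfrac{4}{3} \bigr) v_n^5 .
\]
The key input is the pointwise bound $|rv_n(t, r)| \le C \la_n^{1/2}$ uniformly in $(t, r) \in J_n \times (0, \infty)$, which follows from writing $rv_n(t, r) = \la_n^{1/2} \bar\psi(t/\la_n, r/\la_n)$ with $\bar\psi(\tau, \rho) := \rho\, v(\tau, \rho)$, applying the isomorphism $\HH(\R^5) \cong \HH(\R^3)$ to get $\bar\psi \in L^\infty_t(J; \HH(\R^3))$, and using the radial Sobolev embedding $\HH(\R^3) \hookrightarrow L^{\infty}(\R^3)$ (via interpolation between $\dot H^1(\R^3)$ and $\dot H^2(\R^3)$). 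Combined with the Taylor expansions $Z_1(\rho) = -\tfrac{4}{3} + O(\rho^2)$ and $Z_2(\rho) = \tfrac{4}{3} + O(\rho^2)$ at $\rho = 0$ (both $Z_i$ being smooth and even), this yields $|Z_1(rv_n) + \tfrac{4}{3}| + |Z_2(rv_n) - \tfrac{4}{3}| \le C \la_n$. Splitting $Z_1(rv_n)\, v_n^3 = -\tfrac{4}{3}\, v_n^3 + (Z_1(rv_n) + \tfrac{4}{3}) v_n^3$, the second summand together with the quintic error $(Z_2(rv_n) - \tfrac{4}{3}) v_n^5$ are each $O(\la_n)$ times a scale-invariant nonlinear quantity of the sort estimated in the proof of Proposition~\ref{small data}, and hence $O(\la_n) \to 0$ in $N(J_n)$.

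The remaining piece $-\tfrac{4}{3}\, v_n^3$ must be controlled directly, and this is where the main technical obstacle lies. The essential fact is the subcritical nature of the cubic nonlinearity in our framework: $\dot H^{3/2}$ is its scale-invariant index in $5$-$d$, strictly below the quintic-critical level $\dot H^2$ in which we work. This slack of $1/2$ derivative is extracted by revisiting each nonlinear estimate in Proposition~\ref{small data} for the cubic term and arranging that at least one factor in each bound is a subcritical norm (either $\|v_n\|_{L^{\infty}_t(\dot H^1 \cap L^2)}$ or $\|v_n\|_{L^3_t L^{30/7}_x}$), each of which carries a strictly positive power of $\la_n$ by the scaling computation of Step (i). The delicate point is that the derivative estimates for $v_n^3$ must also be organized this way, avoiding purely scale-invariant factors such as $\|v_n\|_{L^\infty_t \dot H^2} \|v_n\|_{L^3_t \dot W^{1, 30/7}_x}^2$. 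Once $\|\glei(v_n)\|_{N(J_n)} \to 0$ is secured, Corollary~\ref{pert cor} applies for all sufficiently large $n$, producing $\vec u_n \in L^\infty(J_n; \HH)$ solving~\eqref{u eq} with $\|u_n - v_n\|_{S(J_n)} \le C(A) \|\glei(v_n)\|_{N(J_n)} \to 0$, which combined with the uniform bound on $\|v_n\|_{S(J_n)}$ gives $\|u_n\|_{S(J_n)} < \infty$ and establishes the claim.
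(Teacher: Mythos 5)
Your proposal is correct and follows essentially the same route as the paper: apply the perturbation result (Corollary~\ref{pert cor}) to $\vec v_n$, note that the data agree exactly, and show $\|\glei(v_n)\|_{N(J_n)}\to 0$ using the Taylor expansion of $Z_2$ together with the pointwise bound $|rv_n|\lesssim \la_n^{1/2}$ for the quintic error, and the subcriticality of the cubic term for the rest.

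Two remarks on the cubic term, which you flag as ``the main technical obstacle.'' First, the splitting $Z_1(rv_n)v_n^3=-\tfrac43 v_n^3+(Z_1(rv_n)+\tfrac43)v_n^3$ buys you nothing: there is no cubic term in the Euclidean equation to cancel against, so the whole of $Z_1(rv_n)v_n^3$ must vanish in $N(J_n)$ on its own, and the crude bound $|Z_1|\lesssim 1$ suffices. Second, the ``delicate point'' you describe dissolves if you change variables $t=\la_n t'$, $r=\la_n r'$ \emph{before} estimating: since the cubic nonlinearity is subcritical at the $\dot H^2$ level in $5d$, one computes directly that $\|v_n^3\|_{L^{3/2}_t(J_n;L^{30/17})}=\la_n^2\|v^3\|_{L^{3/2}_t(J;L^{30/17})}$ and $\|v_n^2\p_r v_n\|_{L^{3/2}_t(J_n;L^{30/17})}=\la_n\|v^2\p_r v\|_{L^{3/2}_t(J;L^{30/17})}$, so the positive power of $\la_n$ appears for free and one may then apply the estimates from the proof of Proposition~\ref{small data} to the \emph{fixed} solution $v$ without worrying about which factors are critical. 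This is what the paper does. Your alternative plan is also viable --- the Hölder arrangement $\|v_n^2\p_r v_n\|_{L^{3/2}_tL^{30/17}}\lesssim \|v_n\|_{L^\infty_t\dot H^2}\|v_n\|_{L^3_tL^{30/7}}\|v_n\|_{L^3_t\dot W^{1,30/7}}$ already contains the subcritical factor $\|v_n\|_{L^3_tL^{30/7}}=\la_n\|v\|_{L^3_tL^{30/7}}$ --- but you are right that one cannot quote the final symmetrized bound of Proposition~\ref{small data} verbatim, since after the AM--GM step it contains the purely scale-invariant product $\|u\|_{\dot H^2}\|u\|_{\dot W^{1,30/7}}^2$.
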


\begin{proof} The idea is to use the perturbation lemma on $\vec v_{n}$. Begin by observing that 
\ant{
\glei(v_{n}) &= \Box v_{n} + Z_1(r v_{n})v_{n}^{3} + Z_2(r v_{n})v_{n}^5\\
& =\left( Z_2(r v_{n})-\frac{4}{3}\right)v_{n}^5 + Z_1(r v_{n})v_{n}^{3}.
}
Thus by Corollary~\eqref{pert cor} it will suffice to show that 
\EQ{
\|( Z_2(r v_{n})-4/3)v_{n}^5 + Z_1(r v_{n})v_{n}^{3}\|_{N(J_{n})} \to 0 \mas n \to \infty.
}
First, note that we have 
\EQ{
&\abs{Z_1(r v_{n})} \le C.
}
Crucially, one can readily check via the Talyor expansion for $Z_2$ that we have 
\EQ{ \label{Z2 taylor}
& Z_2(r v_{n}) -\frac{4}{3} = O(r^2v_{n}^2).
}
Thus we have 
\ant{
\|( Z_2(r v_{n})-4/3)v_{n}^5 + Z_1(r v_{n})v_{n}^{3}\|_{L^{\frac{3}{2}}_t(J_n; L^{\frac{30}{17}})} &\lesssim   \|r^2  v_n^7\|_{L^{\frac{3}{2}}_t(J_n; L^{\frac{30}{17}})}
+\|v_n^3\|_{L^{\frac{3}{2}}_t(J_n; L^{\frac{30}{17}})} \\
&=  \la_n^2 \|r^2 v^2 v^5\|_{L^{\frac{3}{2}}_t(J; L^{\frac{30}{17}})}\\
& \quad + \la_n^2\|v^3\|_{L^{\frac{3}{2}}_t(J; L^{\frac{30}{17}})},
}
were the second line follows by  recalling that $v_n(t, r) = \la^{-1/2} v(t/ \la_n, r/ \la_n)$ and the change of  variables $\la_n t' = t$ and $ \la_n r' = r$. Using Hardy's inequality for radial functions, i.e., Lemma~\ref{Hardy} and interpolation we have  and  we have 
\ant{
\la_n^2 \|r^2 v^2 v^5\|_{L^{\frac{3}{2}}_t(J; L^{\frac{30}{17}})} &\lesssim  \la_n^2 \|r v\|_{L^{\infty}_t(J; L^{\infty})}^2 \|v^5\|_{L^{\frac{3}{2}}_t(J; L^{\frac{30}{17}})}\\
& \lesssim   \la^2_n \|v\|_{ L^{\infty}_t (J; \dot H^{\frac{3}{2}})}^2 \|v^5\|_{L^{\frac{3}{2}}_t(J; L^{\frac{30}{17}})} \\
& \lesssim  \la^2_n \|v\|_{ L^{\infty}_t (J; \dot H^{1})} \|v\|_{ L^{\infty}_t (J; \dot H^{2})}\|v^5\|_{L^{\frac{3}{2}}_t(J; L^{\frac{30}{17}})}\\
& \lesssim  \la^2_n  \|v\|_{ L^{\infty}_t (J;  \HH)}^2\|v^5\|_{L^{\frac{3}{2}}_t(J; L^{\frac{30}{17}})}.
}
Finally, by the same argument as in the proof of Proposition~\ref{small data} we can control the right hand side above by powers of the $S$ norm and the energy norm which allows us to deduce that 
\ant{
 \la^2_n  \|v\|_{ L^{\infty}_t (J;  \HH)}^2\|v^5\|_{L^{\frac{3}{2}}_t(J; L^{\frac{30}{17}})} \lesssim    \la^2_n  \|v\|_{ L^{\infty}_t (J;  \HH)}^5 \|v\|_{S(J)}^2  \to 0 \mas n \to \infty.
 }
Similarly, we can use again the argument in the proof of Proposition~\ref{small data} to show that 
\ant{
\la_n^2\|v^3\|_{L^{\frac{3}{2}}_t(J; L^{\frac{30}{17}})} \lesssim \la_n^2  \|v\|_{ L^{\infty}_t (J;  \HH)} \|v\|_{S(J)}^2  \to 0 \mas n \to \infty.
}



It remains to show that 
\ant{
\|( Z_2(r v_{n})-4/3)v_{n}^5 + Z_1(r v_{n})v_{n}^{3}\|_{L^{\frac{3}{2}}_t(J_n; \dot W^{\frac{30}{17}})} \to 0 \mas n \to \infty.
}
We first deal with the terms involving the quintic part of the nonlinearity. We have  
\ant{
\p_r(Z_2(r v_{n})-4/3)v_{n}^5  = Z_2'(r v_n)v_n^6 + Z_2'(r v_n) r v_n^5 \p_r v_n + 5( Z_2(r v_n) - 4/3) v_n^4 \p_r v_n.
}
Using~\eqref{Z2 taylor} and the fact that $\abs{Z_2'( \rho)}  \lesssim \abs{\rho}$  we see that 
\ant{
\abs{\p_r(Z_2(r v_{n})-4/3)v_{n}^5 } \lesssim r^2 v_n^6  r^{-1}\abs{v_n}  + r^2 v_n^6 \abs{\p_r v_n}.
}
With the above and Hardy's inequality followed by Sobolev embedding we obtain, 
\ant{
\|( Z_2(r v_{n})-4/3)v_{n}^5 \|_{L^{\frac{3}{2}}_t(J_n; \dot W^{\frac{30}{17}})} &\lesssim \|r^2  v_n^6\|_{L^{\frac{3}{2}}_t(J_n; L^{\frac{15}{4}})} \| r^{-1} v_n\|_{L^{\infty}_t(J_n; L^{\frac{10}{3}})}\\
& \quad  +  \|r^2  v_n^6\|_{L^{\frac{3}{2}}_t(J_n; L^{\frac{15}{4}})}\| \p_r v_n\|_{L^{\infty}_t( J_n; L^{\frac{10}{3}})}\\
&  \lesssim  \|r^2  v_n^6\|_{L^{\frac{3}{2}}_t(J_n; L^{\frac{15}{4}})}\|  v_n\|_{L^{\infty}_t( J_n; \dot H^2)}\\
}
Since $$\|  v_n\|_{L^{\infty}_t( J_n; \dot H^2)} = \|  v\|_{L^{\infty}_t( J; \dot H^2)} \le C,$$ it suffices to show that 
\ant{
\|r^2  v_n^6\|_{L^{\frac{3}{2}}_t(J_n; L^{\frac{15}{4}})} \to 0 \mas n \to \infty.
}
Indeed, changing variables and another application of Lemma~\ref{Hardy} (Hardy's inequality) we can deduce that  
\ant{
\|r^2  v_n^6\|_{L^{\frac{3}{2}}_t(J_n; L^{\frac{15}{4}})}  &=  \la_n  \|r^2  v^6\|_{L^{\frac{3}{2}}_t(J; L^{\frac{15}{4}})}  \\
& \le \la_n \|r v\|_{L^{\infty}_t (J; L^{\infty})}^2 \|v \|_{L^{\frac{3}{2}}(J; L^{15})}^4 \\
& \lesssim \la_n \|v\|_{L^{\infty}_t (J; \HH)}^2 \|v \|_{L^{\frac{3}{2}}(J; L^{15})}^4\\
& \lesssim \la_n \| v\|_{L^{\infty}_t (J; \HH)}^4  \|v \|_{S(J)}^2 \to 0 \mas n \to \infty,
}
where the last line follows again by the argument in the proof of Proposition~\ref{small data} and the boundedness of $v$ in  $X(J) = L^{\infty}_t( J; \HH) \cap S(J)$. We are left to deal with the remaining cubic term. Note that by~\ref{Z bounds}
\ant{
\abs{\p_r(Z_1(r v_n) v_n^3)} &= \abs{r v_n Z_1'(r v_n)( v_n^2 r^{-1}  + v_n^2 \p_r v_n) + 3Z_1(r v_n) v_n^2 \p_r v_n}\\
& \lesssim   v_n^2 r^{-1} \abs{v_n} + v_n^2 \abs{\p_r v_n}.
}
Hence, 
\ant{
\| Z_1(r v_{n})v_{n}^{3}\|_{L^{\frac{3}{2}}_t(J_n; \dot W^{\frac{30}{17}})} \lesssim \|v_n^3 r^{-1}\|_{L^{\frac{3}{2}}_t(J_n; \dot W^{\frac{30}{17}})} + \|v_n^2\p_r v_n\|_{L^{\frac{3}{2}}_t(J_n; \dot W^{\frac{30}{17}})}.
}
Changing variables gives 
\ant{
\|v_n^3 r^{-1}\|_{L^{\frac{3}{2}}_t(J_n; \dot W^{\frac{30}{17}})} + \|v_n^2\p_r v_n\|_{L^{\frac{3}{2}}_t(J_n; \dot W^{\frac{30}{17}})} &= \la_n\|v^3 r^{-1}\|_{L^{\frac{3}{2}}_t(J; \dot W^{\frac{30}{17}})} \\
& \quad+ \la_n\|v^2\p_r v\|_{L^{\frac{3}{2}}_t(J; \dot W^{\frac{30}{17}})}.
}
Finally, using again the arguments from the proof of Proposition~\ref{small data} we obtain
\ant{
\la_n\|v^3 r^{-1}\|_{L^{\frac{3}{2}}_t(J; \dot W^{\frac{30}{17}})}+ \la_n\|v^2\p_r v\|_{L^{\frac{3}{2}}_t(J; \dot W^{\frac{30}{17}})}& \lesssim  \la_n \|v\|_{L^{\infty}(J; \HH)} \|v \|_{S(J)}^2,
}
which tends to $0$ as $n \to \infty$ by the boundedness of the $X(J)$ norm of $v$. This completes the proof. 
\end{proof}


 






\section{Concentration Compactness}
\subsection{Profile decomposition in $ \HH = (\dot{H}^2 \times \dot{H}^1) \cap (\dot{H}^1 \times L^2)$}
In this subsection we prove a nonlinear Bahouri-Gerard type decomposition that lies at the heart of the concentration compactness argument. 

\subsubsection{Linear Profile Decomposition}
We begin with the profile decomposition at the level of the underlying free waves. As we will be dealing with sequences of radial free waves in $\HH$, which is not a scaling invariant norm, the statement of the profile decomposition will be slightly different than in \cite{BG, Bu}. The proof of the following theorem is nearly identical to those found in \cite{BG, Bu}. For completeness  we outline a few minor differences in what follows and we refer the reader to \cite{BG, Bu} for the reminder of the argument. 

\begin{prop}\label{BG} \cite{BG, Bu}
Let $\{\vec u_n\}$ be a sequence of free radial waves bounded in $\HH=(\dot H^2 \times \dot H^1)\cap (\dot H^1 \times L^2)(\R^5)$. Then after replacing it by a subsequence, there exist a sequence of free waves $\vec V_L^j$ bounded in $\HH$, and sequences of times $\{t_{n, j}\}\subset\R$  and scales $\{\la_{n,j}\} \subset (0, \infty)$ such that for  $\ga_n^k$ defined by
\EQ{\label{eq:BGdecomp}
 &u_n(t) = \sum_{1\le j<k}  \frac{1}{ \la_{n,j}^{\frac{1}{2}}}V_L^j\left(\frac{t-t_{n,j}}{  \la_{n,j}}, \frac{ \cdot}{\la_{n,j}}\right) + \ga_n^k(t) \\
 & \p_t u_n(t) =  \sum_{1\le j<k}  \frac{1}{ \la_{n,j}^{\frac{3}{2}}}\p_tV_L^j\left(\frac{t-t_{n,j}}{  \la_{n,j}}, \frac{ \cdot}{\la_{n,j}}\right) + \p_t\ga_n^k(t) \\
}
we have for any $j<k$, 
\EQ{
( \la_{n, j}^{\frac{1}{2}}\ga_n^k( \la_{n,j} t_{n,j}, \la_{n,j} \cdot), \la_{n,j}^{\frac{3}{2}} \p_t\ga_n^k( \la_{n,j} t_{n,j}, \la_{n,j} \cdot)) \rightharpoonup 0 \quad \textrm{weakly in} \quad \HH \mas n\to\I,
}  
as well as 
\EQ{\label{parameters diverge}
 \lim_{n\to\I} \frac{ \la_{n,j}}{ \la_{n,k}} + \frac{ \la_{n,k}}{ \la_{n,j}} + \frac{ |t_{n,j}-t_{n,k}|}{ \la_{n,k}} + \frac{ |t_{n,j}-t_{n,k}|}{ \la_{n,j}} = \I     
}
and the errors $\ga_{n}^{k}$ vanish asymptotically
in the sense that
\EQ{ \label{gamma vanish}
 \pt \lim_{k\to \I} \limsup_{n\to\I} \|\ga_n^k\|_{L^\I_t(\R; W^{1, \frac{10}{3}}_x)\cap S(\R)}=0 \quad  
}
where $S(\R) :=L^{3}_t(\R; W^{1, \frac{30}{7}})$. Finally, one has orthogonality of the $\HH$ norm. 
\EQ{ \label{HH orth}
 \| \vec u_n \|_{\HH}^2 = \sum_{1\le j<k} \|\vec V_L^j\|_{\HH}^2 + \|\vec\ga_n^k\|_{\HH}^2 +o_n(1)
} 
as $n\to\I$. 
\end{prop}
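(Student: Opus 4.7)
The plan is to follow the classical Bahouri--G\'erard construction, adapting it to the inhomogeneous norm $\HH$. The heart of the argument is an extraction lemma that produces a single profile whenever the sequence fails to vanish in a suitable dispersive norm, together with an iteration scheme whose termination is guaranteed by an $\HH$-Pythagorean orthogonality.

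\textbf{Step 1 (Single profile extraction).} I would first establish the following. Assume $\{\vec u_n\}$ is bounded in $\HH$ and that
\ant{
\limsup_{n \to \infty} \|u_n\|_{L^\infty_t(\R; W^{1, 10/3}_x)} \geq \eta > 0.
}
Using the refined Sobolev embedding for radial functions (Strauss/STZ-type) together with the classical concentration compactness lemma of G\'erard, one extracts a subsequence, times $t_n \in \R$, and scales $\la_n > 0$ such that
\ant{
\bigl( \la_n^{1/2} u_n(t_n, \la_n \cdot), \la_n^{3/2} \p_t u_n(t_n, \la_n \cdot)\bigr) \rightharpoonup \vec V(0) \mas n \to \infty \text{ weakly in } \dot H^2 \times \dot H^1(\R^5),
}
with $\|\vec V(0)\|_{\dot H^2 \times \dot H^1} \gtrsim \eta^\alpha$ for some $\alpha > 0$. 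Because the rescaling multiplies $\dot H^1 \times L^2$ by $\la_n$, the weak limit automatically lies in $\HH$, and its $\dot H^1\times L^2$ mass is bounded by $\liminf \|\vec u_n\|_{\dot H^1 \times L^2}$ when $\la_n$ stays bounded, and is zero otherwise. Let $V_L$ be the free wave with data $\vec V(0)$.

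\textbf{Step 2 (Iteration and Pythagorean orthogonality).} Set $\ga_n^2 := u_n - \la_n^{-1/2} V_L((\cdot - t_n)/\la_n, \cdot/\la_n)$. The weak convergence in $\dot H^2 \times \dot H^1$ gives, after evaluating at $t=0$ with appropriate rescaling,
\ant{
\|\vec u_n\|_{\dot H^2 \times \dot H^1}^2 = \|\vec V_L\|_{\dot H^2 \times \dot H^1}^2 + \|\vec \ga_n^2\|_{\dot H^2 \times \dot H^1}^2 + o_n(1),
}
and the analogous identity at the $\dot H^1 \times L^2$ level (in which one of the terms may vanish depending on how the scales concentrate) is obtained either from weak convergence (if $\la_n$ is bounded) or from the explicit shrinking of the rescaled $\dot H^1 \times L^2$ norm. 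Summing gives the $\HH$-orthogonality. Apply Step 1 to $\ga_n^2$, extract $\vec V^2_L$ with parameters $(t_{n,2}, \la_{n,2})$, and iterate. Since $\|\vec u_n\|_\HH$ is bounded and each profile contributes at least $\eta_j^{2\alpha}$ to the budget, the extraction thresholds $\eta_j \to 0$.

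\textbf{Step 3 (Orthogonality of parameters and vanishing).} For $j \ne k$, argue by contradiction: if \eqref{parameters diverge} fails, then up to subsequence we may pass to a common rescaling, and the weak limit produced at stage $k$ would be detected already at stage $j$, contradicting the weak-zero convergence built into the construction of $\ga_n^{j+1}$. Finally, since $\eta_j \to 0$, by definition of the extraction one has
\ant{
\limsup_{n \to \infty} \|\ga_n^k\|_{L^\infty_t(\R; W^{1, 10/3}_x)} \to 0 \mas k \to \infty.
}
Combining this with the Strichartz bound
\ant{
\|\ga_n^k\|_{S(\R)} \lesssim \|\vec \ga_n^k\|_\HH^{1-\theta} \|\ga_n^k\|_{L^\infty_t W^{1,10/3}_x}^\theta,
}
obtained by interpolating the Strichartz space $S(\R) = L^3_t W^{1, 30/7}_x$ between $L^\infty_t W^{1, 10/3}_x$ and the (Strichartz-controlled) dual endpoint, yields the vanishing of $\ga_n^k$ in $S(\R)$ as well.

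\textbf{Main obstacle.} The principal technical subtlety is that $\HH$ is \emph{not} scale-invariant: the rescaling natural for $\dot H^2 \times \dot H^1$ stretches or shrinks the $\dot H^1 \times L^2$ norm by a factor of $\la_n$. Hence one must verify that (i) weak limits of rescaled $\HH$-bounded sequences actually live in $\HH$, (ii) the Pythagorean decomposition \eqref{HH orth} holds for the full inhomogeneous norm despite this scaling mismatch, and (iii) the dispersive norm $L^\infty_t W^{1,10/3}_x \cap S(\R)$ is genuinely controlled by both components of $\HH$ in a uniform way. Once these three points are isolated, the rest of the argument is a faithful transcription of \cite{BG, Bu}, which is why the excerpt indicates that only ``a few minor differences'' need to be tracked.
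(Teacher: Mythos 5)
Your proposal runs on the same engine as the paper's argument---the refined radial Sobolev/Besov embedding of Lemma~\ref{RSE} feeding a G\'erard-type extraction, iterated against a Pythagorean budget, with the $S(\R)$-vanishing recovered from the $L^\infty_t W^{1,10/3}_x$-vanishing by interpolating $L^3_tW^{1,\frac{30}{7}}$ between $L^\infty_tW^{1,\frac{10}{3}}$ and the admissible Strichartz space $L^2_tW^{1,5}$ (exactly the paper's first reduction). The organizational difference is in how the failure of scale invariance of $\HH$ is handled. The paper does not rerun the extraction at all: it splits $u_n=P_{\le 0}u_n+P_{\ge 1}u_n$, shows the low-frequency piece vanishes in $L^\infty_tW^{1,\frac{10}{3}}_x$ (weak nullity plus frequency localization kill the $\dot B^1_{2,\infty}\cap\dot B^2_{2,\infty}$ norm), absorbs it into $\ga_n^k$, and then observes that for $P_{\ge1}u_n$ the homogeneous and inhomogeneous norms are equivalent, so \cite{BG,Bu} applies verbatim. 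You instead track the inhomogeneity scale-by-scale through the iteration, excluding large scales by the blow-up of the $\dot H^1$ norm (the content of Remark~\ref{0 or 1 scales}). Both routes are viable; the frequency truncation buys that nothing new has to be proved after the reduction, while your route makes the role of each scale regime explicit.

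One assertion in your Step 1 is backwards and should be fixed, since you lean on it for both the membership of the profiles in $\HH$ and the $\dot H^1\times L^2$ half of \eqref{HH orth}. The extraction map $u_n\mapsto\bigl(\la_n^{1/2}u_n(t_n,\la_n\cdot),\la_n^{3/2}\p_tu_n(t_n,\la_n\cdot)\bigr)$ multiplies the $\dot H^1\times L^2(\R^5)$ norm by $\la_n^{-1}$, not by $\la_n$. Consequently it is the scales $\la_n\to\infty$ for which the rescaled sequence tends to zero in $\dot H^1\times L^2$ and the weak limit is forced to be trivial (this is how low-frequency concentration is excluded), whereas for concentrating scales $\la_n\to0$ the rescaled sequence is \emph{unbounded} in $\dot H^1\times L^2$ and the weak limit is a priori only in $\dot H^2\times\dot H^1$; since $\dot H^2(\R^5)\not\hookrightarrow\dot H^1(\R^5)$, it does not ``automatically lie in $\HH$.'' Your two cases are thus interchanged. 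This does not derail the scheme---the small-scale (Euclidean) profiles enter the rest of the paper only through the scale-invariant $\dot H^2\times\dot H^1$ theory of Theorem~\ref{euc thm}, and their rescaled copies contribute $o_n(1)$ to the $\dot H^1\times L^2$ component---but the justification must be rewritten with the roles of $\la_n\to0$ and $\la_n\to\infty$ swapped, and the $\HH$-membership claim restricted to the unit-scale profiles.
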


\begin{rem} The subscript $L$ in $V^j_{L}$ stands for ``linear" and is used here to distinguish these profiles, which are free waves, from the nonlinear profiles, which will be introduced below. 
\end{rem}
 \begin{rem}\label{0 or 1 scales} Note that up to extracting a further subsequence we can assume that each scale $\la_{n,j} \to \la_{\infty, j} \in [0, +\infty)$. In addition, we can further assume that for each scale $\la_{n,j}$ we either have 
 \EQ{
 \la_{n,j} &= 1 \, \, \forall n \\
 &\mor\\
 \la_{n,j } &\to 0 \mas n \to \infty.
 }
 This is due to the fact that our sequence $\vec u_n$ is bounded in the inhomogeneous space $\HH$ and hence the failure of compactness cannot arise via a low frequency buildup, but rather only via time translation or frequencies escaping to $\infty$. To see this suppose that $w \in \dot H^2 \cap \dot H^1$. Let $\{\la_n\}$ be any sequence of positive numbers so that $\la_n \to \infty$ as $n \to \infty$ and consider $$w_n(r):= \frac{1}{ \sqrt{ \la_n}} w( r/ \la_n).$$ We then have 
 \ant{
 \|w_n\|_{ \dot{H}^2} = \|w\|_{\dot{H}^2}
 }
 but 
 \ant{
 \|w_n\|_{ \dot{H}^1} = \la_n \|w\|_{ \dot{H}^1} \to \infty \mas n \to \infty.
 }
 Hence the sequence $w_n$ is not bounded in $\dot{H}^2 \cap \dot H^1$. 
 
 Also, if for any $j$ we have $\la_{n,j} \to \mu>0$ we can assume, by rescaling the profile that $\la_{n,j} = 1$ for all $n$. This observation will be important when we consider nonlinear profiles in the following subsection. 
 
 \end{rem}
 
 \begin{rem} \label{times 0 or inf}
 After passing to a subsequence if necessary, we can, without loss of generality, assume that the sequences $t_{n, j}/ \la_{n, j} \to \mu \in [- \infty, \infty]$ are convergent.   By translating the profiles, we can the also ensure that for a triple $(V_L^j, t_{n, j}, \la_{n. j})$ we either have
 \ant{
 &t_{n, j} = 0  \quad \forall n\\
& \mor\\
 & t_{n, j}/ \la_{n,j} \to \pm \infty \mas n \to \infty.
}
 
 \end{rem}

 The proof of Proposition~\ref{BG} requires the following refinement of the Sobolev embedding theorem, proved in \cite{BG, Bu}.  
 
 \begin{lem}[Refined Sobolev Embedding]\label{RSE}\cite{BG, Bu} Given $f \in \HH$ we have 
 \ant{
 \|f\|_{W^{1, \frac{10}{3}}} \lesssim \| f\|_{ \dot{B}^1_{2, \infty} \cap \dot B^2_{2, \infty}}^{\frac{1}{3}} \|f \|_{\HH}^{\frac{2}{3}}
 }
 \end{lem}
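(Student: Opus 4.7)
The plan is to reduce the claim to the scalar refined Sobolev inequality
\begin{equation}\label{my-scal}
\|g\|_{L^{10/3}(\R^5)} \lesssim \|g\|_{\dot B^1_{2,\infty}}^{1/3}\,\|g\|_{\dot H^1}^{2/3},
\end{equation}
which I would apply once with $g=f$ and once with $g=\nabla f$. Using the standard identifications $\|\nabla f\|_{\dot B^1_{2,\infty}}\simeq\|f\|_{\dot B^2_{2,\infty}}$ and $\|\nabla f\|_{\dot H^1}=\|f\|_{\dot H^2}$, and the fact that the intersection norms $\|f\|_{\dot B^1_{2,\infty}\cap\dot B^2_{2,\infty}}$ and $\|f\|_\HH$ dominate the individual pieces, adding the two resulting bounds yields the desired control on $\|f\|_{W^{1,10/3}}\lesssim \|f\|_{L^{10/3}} + \|\nabla f\|_{L^{10/3}}$.

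To prove \eqref{my-scal}, I would first invoke the classical refined Sobolev estimate of Gérard--Meyer--Oru (used in Bahouri--Gérard),
\begin{equation}\label{my-clas}
\|g\|_{L^{10/3}} \lesssim \|g\|_{\dot H^1}^{3/5}\,\|g\|_{\dot B^{-3/2}_{\infty,\infty}}^{2/5}.
\end{equation}
The standard proof of \eqref{my-clas} uses the layer-cake formula for $\|g\|_{L^{10/3}}^{10/3}$, Littlewood--Paley-decomposes $g = P_{\le N(t)}g + P_{>N(t)}g$ at each level $t$ with the cutoff chosen so that $\|P_{\le N(t)}g\|_{L^\infty}\le t/2$ (via $\|P_N g\|_{L^\infty}\le N^{3/2}\|g\|_{\dot B^{-3/2}_{\infty,\infty}}$), and bounds the super-level set by Chebyshev applied to the high-frequency tail, controlled by $\|P_{>N(t)}g\|_{L^2}\lesssim N(t)^{-1}\|g\|_{\dot H^1}$; the resulting $t$-integral must be split at an appropriate threshold in order to convert the formally logarithmic divergence of the na\"ive bound into a finite estimate, and the exponents $3/5, 2/5$ emerge from the scaling of this split.

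Once \eqref{my-clas} is in hand, the Bernstein inequality $\|P_N g\|_{L^\infty}\lesssim N^{5/2}\|P_N g\|_{L^2}$ in $\R^5$ yields the frequency-space embedding $\dot B^1_{2,\infty}\hookrightarrow \dot B^{-3/2}_{\infty,\infty}$, which upgrades \eqref{my-clas} to $\|g\|_{L^{10/3}} \lesssim \|g\|_{\dot H^1}^{3/5}\|g\|_{\dot B^1_{2,\infty}}^{2/5}$. Interpolating this with the trivial Sobolev bound $\|g\|_{L^{10/3}}\lesssim\|g\|_{\dot H^1}$ using the weights $5/6$ and $1/6$,
\begin{equation*}
\|g\|_{L^{10/3}} \le \bigl(\|g\|_{\dot H^1}^{3/5}\|g\|_{\dot B^1_{2,\infty}}^{2/5}\bigr)^{5/6}\,\|g\|_{\dot H^1}^{1/6} = \|g\|_{\dot H^1}^{2/3}\|g\|_{\dot B^1_{2,\infty}}^{1/3},
\end{equation*}
then produces \eqref{my-scal}. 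The main obstacle is the level-set argument behind \eqref{my-clas}, which requires the careful split of the $t$-integral noted above; once that is granted, the remaining reductions are elementary Bernstein embeddings and scalar interpolations.
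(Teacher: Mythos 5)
The paper does not actually prove this lemma; it is quoted from \cite{BG, Bu}, so there is no in-house argument to compare yours against. Your proposal is essentially the standard proof underlying those references, and its architecture is sound: the componentwise reduction (using $\|\nabla f\|_{\dot B^1_{2,\infty}}\simeq\|f\|_{\dot B^2_{2,\infty}}$ and $\|\nabla f\|_{\dot H^1}\simeq\|f\|_{\dot H^2}$), the Bernstein embedding $\dot B^1_{2,\infty}(\R^5)\hookrightarrow\dot B^{-3/2}_{\infty,\infty}(\R^5)$, and the weighted geometric-mean step $A\le B^{5/6}C^{1/6}$ are all correct, and the exponent arithmetic ($\tfrac35\cdot\tfrac56=\tfrac12$, $\tfrac25\cdot\tfrac56=\tfrac13$, $\tfrac12+\tfrac16=\tfrac23$) checks out.

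The one place your sketch would not survive being executed as written is the level-set proof of the G\'erard--Meyer--Oru inequality. ``Splitting the $t$-integral at a threshold'' cannot remove the divergence: once you bound the high-frequency tail by a single power, $\|P_{>N(t)}g\|_{L^q}\lesssim N(t)^{-\sigma}(\cdot)$, scale invariance forces the resulting integrand to be exactly $t^{-1}$ times constants \emph{for every admissible Chebyshev exponent $q$}, so both halves of any split diverge. The standard fix is different: keep the full square function, $\|P_{>N(t)}g\|_{L^2}^2=\sum_{M>N(t)}\|P_Mg\|_{L^2}^2$, and interchange the $t$-integral with the dyadic sum, so that
\[
\int_0^\infty t^{p-3}\sum_{M>N(t)}\|P_Mg\|_{L^2}^2\,dt=\sum_M\|P_Mg\|_{L^2}^2\int_0^{cM^{3/2}B}t^{p-3}\,dt\lesssim B^{p-2}\sum_M M^{2}\|P_Mg\|_{L^2}^2,
\]
which converges because $p=\tfrac{10}{3}>2$ and reassembles into $B^{p-2}\|g\|_{\dot H^1}^2$. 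Since G\'erard--Meyer--Oru is itself a classical result that you may legitimately cite (it is precisely what \cite{BG} uses), this imprecision does not invalidate your proof of the lemma; but if you intend to prove that ingredient rather than quote it, replace the ``split the integral'' step by the Fubini argument above.
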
 

\begin{proof}[Reduction of Proof of Proposition~\ref{BG} to the argument in~\cite{BG}]
Instead of reproducing the entire argument, we simply outline the reduction of the Proposition to a situation where the argument in~\cite{BG} can then be applied verbatim. We note that without loss of generality we can restrict our attention to sequences $\vec u_n$ such that 
\EQ{
\vec u_n(0) \rightharpoonup 0  \quad \textrm{weakly in}  \quad \HH,
}
since if $\vec u_n(0) \rightharpoonup \vec v^1(0) \neq 0$ weakly in $\HH$,  we always take the weak limit $ \vec v^1(0)$  to be the first profile with times $t_{n,1} = 0$  and scales $\la_{n,1} = 1$ for every $n$, and then consider the new sequence $\vec u_n - \vec v^1$. 

First, we observe that we can reduce~\eqref{gamma vanish} to estimating the $L^{\infty}_tW^{1, \frac{10}{3}}_x$ norm alone. Indeed, by interpolation followed by an application of the Strichartz estimates we have 
\ant{
\| \gamma_n^k\|_{L^{3}_tW^{1, \frac{30}{7}}_x} &\le \| \gamma_n^k\|_{L^{\infty}_t W^{1, \frac{10}{3}}_x}^{\frac{1}{3}} \| \gamma_n^k\|_{L^{2}_tW^{1, 5}}^{\frac{2}{3}}\\
&\lesssim \| \gamma_n^k\|_{L^{\infty}_t W^{1, \frac{10}{3}}_x}^{\frac{1}{3}}  \| \vec \gamma_n^k\|_{\HH}\\
&\lesssim \| \gamma_n^k\|_{L^{\infty}_t W^{1, \frac{10}{3}}_x}^{\frac{1}{3}} 
}
where the uniform boundedness of the $\HH$ norms of the $\gamma_n^k$ follows from~\eqref{HH orth}, (and the latter not depending on~\eqref{gamma vanish} for the proof). 
 
 Next we remark that we can absorb the low frequencies of our sequence into the errors $\gamma_n^k$. Indeed, write 
\EQ{
u_n  &= P_{ \le 0} u_n  + P_{ \ge 1} u_n\\
&=: w_n + \ti u_n
}
where $P_{ \ge 1}$ denotes projection onto frequencies of size $\gec 1$. We claim that 
\EQ{
\| \vec w_n\|_{L^{\infty}_tW^{1, \frac{10}{3}}} \to 0 \mas n \to \infty
}
By Lemma~\ref{RSE} it suffices to show that 
\EQ{
\limsup_{n \to \infty}\| \vec w_n\|_{L^{\infty}_t ((\dot{B}^2_{2, \infty} \times \dot{B}^1_{2, \infty}) \cap (\dot{B}^1_{2, \infty} \times \dot{B}^0_{2, \infty}))}  =0
}
Since both $\vec w_n(t) = (w_n(t), \p_t w_n(t))$  and $\vec w_{n, r}(t) = ( \p_r w_n(t), \p_t \p_r w_n (t))$ are free waves, the above norm is in fact constant in time --  indeed the free wave operator commutes with Fourier multipliers. So we can further reduce matters to showing that  
\EQ{\label{B vanish}
\limsup_{n \to \infty}\| \vec w_n(0)\|_{(\dot{B}^2_{2, \infty} \times \dot{B}^1_{2, \infty}) \cap (\dot{B}^1_{2, \infty} \times \dot{B}^0_{2, \infty})}  =0.
}
And the above follows from the fact that $\vec w_n \rightharpoonup 0$ weakly in $\HH$ and the frequency localization, $$ \supp \hat w_n(0,  \cdot) \subset \{ \abs{ \xi} \le 1\}.$$ (Indeed, in the terminology of \cite{BG}, $w_n$ and its derivatives are $\underline{\varepsilon}$-singular for all scales $\underline{\e}:= \{\e_n\}_{n=1}^{\infty} \to 0$ and hence~\eqref{B vanish} follows from \cite[Lemma $3.1$]{BG}. )

Therefore, it suffices to extract profiles from the sequence 
\EQ{
 \ti u_n := P_{ \ge 1} u_n.
 }
 The key point here is that for functions with Fourier support bounded away from $\{\xi = 0\}$ the homogeneous and inhomgeneous Sobolev (and Besov) norms are equivalent. Hence our sequence of free waves satisfies the uniform bounds 
 \EQ{
\sup_{t, n} \|  \vec{\ti{u}}_n(t) \|_{H^2 \times H^1} \lesssim \sup_{t, n} \|  \vec{\ti{u}}_n(t) \|_{\HH} \le  C.
 }
From here, the proof proceeds in the exact same fashion as in Bahouri, Gerard \cite{BG} or Bulut \cite{Bu} and we refer the reader to either of these references for the remainder of the argument. 
\end{proof}

\subsubsection{Nonlinear Profiles} \label{nonlinear profiles}
We now turn to the nonlinear profiles that we can associate to a sequence of data bounded in $\HH$. Consider a bounded sequence $\vec u_n \in \HH$ and after extracting a suitable subsequence the associated profile decomposition 
\EQ{
 &u_{n, 0} = \sum_{1\le j<k}  \frac{1}{ \la_{n,j}^{\frac{1}{2}}}V_L^j\left(\frac{-t_{n,j}}{  \la_{n,j}}, \frac{ \cdot}{ \la_{n,j}}\right) + \ga_{n, 0}^k \\
 &  u_{n, 1} =  \sum_{1\le j<k}  \frac{1}{ \la_{n,j}^{\frac{3}{2}}}\p_tV_L^j\left(\frac{-t_{n,j}}{ \la_{n,j}}, \frac{ \cdot}{ \la_{n,j}}\right) + \ga_{n, 1}^k 
 }
as in Proposition~\ref{BG}. Recall by Remark~\ref{0 or 1 scales} that we can assume that for each $j$ either $\la_{n,j} \to 0 $ as $n \to \infty$ or $\la_{n,j} = 1$ for all $n$. With this distinction in the behavior of the scales we define two different types of profiles accordingly. From here on, we will refer to a triple $(V_L^j, t_{n, j}, \la_{n, j})$ as   {\em Euclidean} if the scale $\la_{n,j}  \to 0$ as $n \to \infty$ and we will use the notation $V^j_{e, L}$ for the profile.  On the other hand, if $\la_{n, j} =1 $ for all $n$ we will refer to the triple $(V_L^j, t_{n, j}, \la_{n, j}) $ as {\em Adkins-Nappi}  and we will use the notation $U^j_{a,L}$ for the profile.  As each triple is either Euclidean  or Adkins-Nappi we can rewrite our profile decomposition as follows: 
\EQ{ \label{AN BG}
 &u_{n, 0} = \sum_{1\le j<k}  \frac{1}{\la_{n,j}^{\frac{1}{2}}}V^j_{e, L}\left(\frac{-t_{n,j}}{  \la_{n,j}}, \frac{ \cdot}{ \la{_n,j}}\right) + \sum_{1\le \ell<k}  U^{\ell}_{a,L}\left(-t_{n,\ell}, \cdot \right) + \ga_{n, 0}^k, \\
 &  u_{n, 1} =  \sum_{1\le j<k}  \frac{1}{ \la_{n,j}^{\frac{3}{2}}}\p_tV^j_{e, L}\left(\frac{-t_{n,j}}{  \la_{n,j}}, \frac{ \cdot}{ \la_{n,j}}\right)+ \sum_{1\le \ell<k}  \p_tU^{\ell}_{a,L}\left(-t_{n,\ell}, \cdot \right)  + \ga_{n, 1}^k.
 } 

Now to each profile $V_{e, L}^j$, respectively  $U_{a,L}^{\ell}$, we can associate a unique nonlinear profile $V^j_{e}$, respectively $U^{\ell}_{a}$, which is defined as follows. 

For a Euclidean profile $V_{e, L}^j$ with the associated sequences $\{t_{n, j}\}$, $\{\la_{n, j}\}$ we define the nonlinear profile $V^j_{e}$ to be the unique solution to~\eqref{defoc v eq} so that for all $n$, $-t_{n, j}/ \la_{n, j} \in I_{\max}(\vec V^j_{e}) = (T_-(\vec V_{e}^j), T_+(\vec V_e^j))$ and 
\EQ{
\| \vec V_{e, L}^j(-t_{n, j}/ \la_{n,j}) - \vec V_{e}^j(-t_{n, j}/ \la_{n, j}) \|_{\HH} \to 0 \mas n \to \infty.
}
We note that a nonlinear profile always exists. In the case $t_{n, j} = 0$ for all  $n$ the existence of the nonlinear profile follows from the local well-posedness theory for~\eqref{defoc v eq} developed in \cite{KM11b}. In the case that $-t_{n, j}/ \la_{n, j} \to  \pm \infty$ the existence of the nonlinear profile is referred to as the existence of wave operators and follows by running the same argument as for the local well-posedness theory from  $t_0 = \pm \infty$. Note that if $-t_{n, j}/ \la_{n.j}  \to + \infty$ then it is immediate that $T_+( \vec V^j_{e}) = + \infty$ and  
\EQ{
s_0> T_-(\vec V_e^j) \Longrightarrow \,  \|V_e^j\|_{S([s_0, \infty))} < \infty.
}
The above, we recall is equivalent to $\vec V_e^j$ scattering at $t=+ \infty$ which occurs by construction. An analogous statement holds if $-t_{n, j}/ \la_{n, j}  \to - \infty$. 
To simplify notation we will write 
\EQ{\label{Vn}
V_{e, n}^j(t, r):= \frac{1}{ \la_{n, j}^{\frac{1}{2}}} V^j_{e}\left( \frac{ t- t_{n, j}}{ \la_{n, j}}, \frac{r}{ \la_{n, j}} \right).
}
 We remark that this construction is natural in light of Proposition~\ref{small scale prop} since $\la_{n,j} \to 0$ implies that in this situation, solutions to the Euclidean equation~\eqref{defoc v eq} serve as a very good approximations to solutions to~\eqref{u eq} as $n \to  \infty$. This fact will be reflected in the nonlinear profile decomposition proved below in Proposiion~\ref{nonlin profile}.

For an Adkins-Nappi profile $ \vec U_{a,L}^{\ell}$, with sequence $\{t_{n, \ell}\}$, we similarly define a nonlinear profile, $\vec U_{an,}^{\ell}$, which is the unique solution to~\eqref{u eq}  with $-t_{n, \ell} \in I_{\max}( \vec U_{a})$ for all $n$ and
\EQ{
\| \vec U_{a,L}^{\ell}(-t_{n, \ell}) - \vec U_{a}^{\ell}(-t_{n, \ell}) \|_{\HH} \to 0 \mas n \to \infty.
}
Such a nonlinear profile again always exists by the local well-posedness theory from Proposition~\ref{small data}. And again we note that if $-t_{n, \ell} \to + \infty$ we have $T_+( \vec U_{a}^{\ell}) = + \infty$ and 
\EQ{
s_0> T_-(\vec U_{a}^{\ell}) \Longrightarrow \,  \|U_{a}^{\ell}\|_{S([s_0, \infty))} < \infty.
}
Similarly, if  $-t_{n, \ell} \to - \infty$ then by construction, $T_(\vec U_a^{\ell}) = - \infty$ and  $\vec U_a^{\ell}$ scatters at $- \infty$.  
To simplify notation we will denote 
\EQ{ \label{Un}
U_{a, n}^{\ell}(t, r):=   U_a^{\ell}(t - t_{n, j}, r).
}
 
The key point here is that there despite the lack of a superposition principle at the nonlinear level, the orthogonality of the parameters allows for a nonlinear profile decomposition which serves as a good approximation for the nonlinear evolution of the original sequence of initial data. 

\begin{prop}[Nonlinear profile decomposition] \cite{DKM1, CKLS1}\label{nonlin profile} Let $(u_{n, 0}, u_{n, 1}) \in \HH$ be a bounded sequence along with the associated profile decomposition as in~\eqref{AN BG}. Let $ \vec V_{e}^j$, $ \vec U_{a}^{\ell}$ be the associated Euclidean and Adkins-Nappi nonlinear profiles as defined in this subsection. Let $\{s_n\}  \subset (0, \infty)$ be any sequence of times so that 
\EQ{
&\forall j \ge 1, \, \,   \forall n, \, \, \frac{s_n - t_{n, j}}{ \la_{n, j}} < T_+( \vec V^{j}_e) \mand   \limsup_{n \to \infty}  \|V_e^j\|_{S\left( \frac{-t_{n, j}}{ \la_{n, j}},  \frac{ s_n- t_{n, j}}{ \la_{n, j}} \right)} < \infty, \\
&\forall  \ell \ge 1, \, \,   \forall n, \, \, s_n - t_{n, j}  < T_+( \vec U^{\ell}_{a}) \mand   \limsup_{n \to \infty}  \|U_{a}^{\ell}\|_{S\left( -t_{n, j},  s_n- t_{n, j}\right)} < \infty. 
  }
  If $\vec u_n(t) \in \HH$ is the solution to~\eqref{u eq} with initial data $\vec u_n(0) = (u_{n, 0}, u_{n, 1})$ then $\vec u_n(t)$ is defined on $[0, s_n)$ and 
  \EQ{
    \limsup_{n \to \infty} \|u_n\|_{S([0, s_n))} < \infty.
    }
    Moreover the following nonlinear profile holds decomposition holds: For $\eta_n^k$ defined by 
   \EQ{ \label{BG nonlin}
 &u_{n}(t,r) = \sum_{1\le j<k}  V^j_{e, n}(t, r) + \sum_{1\le \ell<k}  U^{\ell}_{a,n}(t,r) + \ga_{n}^k(t) + \eta_{n}^k(t), 
 } 
we have 
\EQ{
\lim_{k \to \infty} \limsup_{n \to \infty}  \left(\| \eta_n^k\|_{S([0,  s_n))} + \|  \vec \eta_n^k\|_{L^{\infty}_t([0, s_n); \HH)} \right) = 0.
}
Here $\ga_{n}^k(t) \in \HH$ is as in Proposition~\ref{BG} and  $V_{e, n}^j$ and $U_{a, n}^{\ell}$ are defined as in~\eqref{Vn} and~\eqref{Un} respectively. Also, we note that an analogous statement holds for $ s_n<0$. 

\begin{proof}[Proof of Proposition~\ref{nonlin profile}]
Set 
\EQ{
\vec v_n^k(t, r) := \sum_{1\le j<k}  V^j_{e, n}(t, r) + \sum_{1\le \ell<k}  U^{\ell}_{a,n}(t,r) 
}
The idea is to use the Lemma~\ref{perturbation} on $\vec u_n$ and $v_n^k$ for large $n$ and we need to check that the conditions of Lemma~\ref{perturbation} are satisfied for these choices. 
For each $n$ we let $I_n := [0, s_n)$. First note that $\glei(u_n) = 0$. We claim that $\|\textrm{eq}(v_n^k)\|_{N(I_n)}$ is small for large $n$. To see this, we define 
\ant{
\mathcal{N}( u) &:= Z_1(ru)u^3 + Z_2(ru)u^5 \\
&= \frac{\sin(2ru)- 2ru}{r^3} +  \frac{(ru - \frac{1}{2}\sin(2ru))2\sin^2(ru)}{r^5}
}
to be the nonlinearity in~\eqref{u eq}. Then we have 
\EQ{ \label{eqvnk}
\glei( v_n^k) &=  
 \NN\left(\sum_{1\le j<k}  V^j_{e, n}(t, r) + \sum_{1\le \ell<k}  U^{\ell}_{a,n}(t,r)\right) \\
& \quad - \sum_{1\le j<k} \NN(V^j_{e, n}(t, r)) - \sum_{1\le \ell<k} \NN( U^{\ell}_{a,n}(t,r))  \\
& \quad +  \sum_{1\le j<k} \left[ \NN(V^j_{e, n}(t, r)) - \frac{4}{3} (V^j_{e, n}(t, r))^5\right]
}
The same argument used to prove Proposition~\ref{small scale prop} gives that for each fixed $k$ we have 
\ant{
\left\|\sum_{1\le j<k} \left[ \NN(V^j_{e, n}(t, r)) - \frac{4}{3} (V^j_{e, n}(t, r))^5\right] \right\|_{N(I_n)} \longrightarrow 0 \mas n \to \infty
}
To control the first two lines of the right hand side of~\eqref{eqvnk} we make use of the following simple trigonometric inequalities. 
\begin{multline} \label{trig 1}
\abs{\frac{\sin(2ru) + \sin(2rv) - \sin(2r(u+v))}{2r^3}} \\= \abs{\frac{2 \sin(2ru) \sin^2(rv) + 2 \sin(2rv) \sin^2 (ru)}{2r^3}}  \lesssim u^2\abs{v} + v^2 \abs{u}
\end{multline} 
and 
\begin{align}  \notag
&\Bigg\vert\frac{(ru + rv -  \frac{1}{2} \sin(2ru+2rv))2\sin^2(ru + rv)}{r^3}\\ \label{trig 2}
& \quad- \frac{(ru- \frac{1}{2} \sin(2ru))2\sin^2(ru) - (rv- \frac{1}{2} \sin(2rv)) 2\sin^2 (rv)}{r^3} \Bigg| \\
&\lesssim \abs{u}^4 \abs{v}+ \abs{u}^3\abs{v}^2 + \abs{u}^2 \abs{v}^3 + \abs{u} \abs{v}^4 .\notag
\end{align}
Then, using~\eqref{trig 1}, \eqref{trig 2} together with the pseudo-orthogonality of the parameters, i.e.,~\eqref{parameters diverge}, one can show that 
\EQ{\label{vnk vanish}
&\Bigg\|\NN\left(\sum_{1\le j<k}  V^j_{e, n}(t, r) + \sum_{1\le \ell<k}  U^{\ell}_{a,n}(t,r)\right)  \\
& \quad - \sum_{1\le j<k} \NN(V^j_{e, n}(t, r)) - \sum_{1\le \ell<k} \NN( U^{\ell}_{a,n}(t,r)) \Bigg\|_{N(I_n)} \to 0 \mas n \to \infty  .
 }
 The proof of~\eqref{vnk vanish} using~\eqref{parameters diverge} is standard and we refer the reader to~\cite[Lemma~$2.18$, Lemma~$2.16$]{CKLS1} for the details.
 
Next it is essential that 
\begin{align} \label{unif in k}
 \limsup_{n \to \infty} \left\|\sum_{1\le j<k}  V^j_{e, n} + \sum_{1\le \ell<k}  U^{\ell}_{a,n}\right\|_{S(I_n)} \le C < \infty,
 \end{align} 
 {\em uniformly in $k$}, which will follow from the small data theory together with \eqref{HH orth}. The point here is that the sum can be split into one over $1 \le j \le j_0$ and another over $j_0 \le j \le k$. The splitting is performed in terms of the $\HH$ norm, with $j_0$ being chosen so that 
 \begin{align*} 
  \limsup_{n \to \infty} \left( \sum_{j_0 <j \le k} \|V^j_{e,L}(-t_{n, j}/ \la_{n,j})\|_{\HH}^2  + \sum_{j_0 < \ell \le k} \|U^j_{a,L}(-t_{n, j})\|_{\HH}^2 \right)< \de_0^2,
  \end{align*}
  where $\de_0$ is chosen so that the small data theory for both ~\eqref{u eq} and~\eqref{defoc v eq} applies. Using again \eqref{parameters diverge} as well as the small data scattering theory one now obtains 
  \begin{align*} 
  \limsup_{n \to \infty}  &\left\|\sum_{j_0< j\le k}  V^j_{e, n} + \sum_{j_0< \ell<k}  U^{\ell}_{a,n}\right\|^3_{S(I_n)} = \sum_{j_0 < j \le k}\|V^j_{e}\|_{L^{3}_t\left((-\frac{t_{n, j}}{\la_{n, j}}, \frac{s_n- t_{n, j}}{ \la_{n, j}}); \dot W^{1, \frac{30}{7}}\right)}^3 \\
  &\hspace{2in} +  \sum_{j_0< \ell \le k}  \| U_{a}^{\ell}\|_{S(-t_{n, j}, s_n-t_{n, j})}^3 \\
  & \le C \limsup_{n \to \infty}\left( \sum_{j_0 <j \le k} \|V^j_{e,L}(-t_{n, j}/ \la_{n,j})\|_{\HH}^2  + \sum_{j_0 < \ell \le k} \|U^j_{a,L}(-t_{n, j})\|_{\HH}^2 \right)^{\frac{3}{2}} 
  \end{align*}
 with an absolute constant $C$. This implies \eqref{unif in k}. 
 
 Finally, we note that by fixing $k_0$ large enough we can use~\eqref{gamma vanish} to ensure that the $S(I_n)$ norm of $S(t)(\vec u_n(0)- \vec v_n^k(0)) = \vec \ga_n^k(t)$ is small enough to apply Lemma~\ref{perturbation} for all $k>k_0$ and for large $n$ enough. The desired result follows directly from Lemma~\ref{perturbation}.
 \end{proof}

\end{prop}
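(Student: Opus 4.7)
The plan is to apply the Perturbation Lemma (Lemma~\ref{perturbation}) with the approximate solution
\EQ{\nn
\vec v_n^k(t,r) := \sum_{1 \le j < k} \vec V_{e,n}^j(t,r) + \sum_{1 \le \ell < k} \vec U_{a,n}^\ell(t,r),
}
and then let $k\to\infty$. This requires three inputs: (a) a uniform-in-$k$ bound on $\|v_n^k\|_{S(I_n)}$, (b) smallness of $\|\glei(v_n^k)\|_{N(I_n)}$ as $n,k\to\infty$, and (c) smallness of the linear error $\vec\gamma_n^k$ in $S(I_n)$. Item (c) is exactly~\eqref{gamma vanish}, so the real work is (a) and (b).

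For (a), I would fix $j_0$ large enough that the tail sum satisfies
\EQ{\nn
\limsup_n \Big(\sum_{j_0 < j < k}\|\vec V_{e,L}^j(-t_{n,j}/\la_{n,j})\|_\HH^2 + \sum_{j_0<\ell<k}\|\vec U_{a,L}^\ell(-t_{n,\ell})\|_\HH^2\Big) < \de_0^2,
}
using the $\HH$-orthogonality~\eqref{HH orth}; this is possible since all but finitely many profiles have $\HH$-norm below the small-data threshold $\de_0$. For $j \le j_0$ (finitely many) the $S$-norms are bounded by hypothesis. For the tail, the small-data theories (Proposition~\ref{small data} and Theorem~\ref{euc thm}) give each individual profile $S$-norm controlled by its $\HH$-norm, and then the pseudo-orthogonality~\eqref{parameters diverge} of the parameters lets me cube and sum without cross-interaction, giving a bound by $C\de_0^3$. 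Combining the two pieces yields the required uniform bound.

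For (b), I would decompose
\EQ{\nn
\glei(v_n^k) = \Big[\NN(v_n^k) - \sum_{j<k}\NN(V_{e,n}^j) - \sum_{\ell<k}\NN(U_{a,n}^\ell)\Big] + \sum_{j<k}\Big[\NN(V_{e,n}^j) - \tfrac{4}{3}(V_{e,n}^j)^5\Big].
}
The second bracket vanishes in $N(I_n)$ as $n\to\infty$ for each fixed $k$ by exactly the argument of Proposition~\ref{small scale prop}: the Euclidean profiles live at scales $\la_{n,j}\to 0$, and the Taylor expansion $Z_2(\rho)-\tfrac{4}{3} = O(\rho^2)$ together with Hardy's inequality produces a factor of $\la_{n,j}$. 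The first bracket is the sum of cross-interaction terms, which I would estimate using the pointwise trigonometric inequalities~\eqref{trig 1}–\eqref{trig 2} reducing matters to mixed products $|u|^a|v|^b$ with $a+b \in \{3,5\}$, then applying the standard orthogonality argument of~\cite[Lemmas~2.16, 2.18]{CKLS1} which exploits~\eqref{parameters diverge} to show each cross term vanishes in $N(I_n)$ as $n\to\infty$.

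Once (a), (b), (c) are established, the Perturbation Lemma yields a solution $\vec u_n$ on $[0,s_n)$ with $\|u_n - v_n^k\|_{S(I_n)} \to 0$ and $\|\vec u_n - \vec v_n^k - \vec\gamma_n^k\|_{L^\infty_t\HH} \to 0$, from which the nonlinear profile decomposition~\eqref{BG nonlin} follows with $\eta_n^k$ satisfying the claimed limit. The main technical obstacle I anticipate is the first bracket in (b): ensuring that the fully nonlinear cross-interactions (cubic from $Z_1$ and quintic from $Z_2$, with the singular $r^{-3}$ and $r^{-5}$ weights) really decouple under the scale/time orthogonality~\eqref{parameters diverge}. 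The trig identities~\eqref{trig 1}–\eqref{trig 2} are precisely what remove the linear-in-$\psi$ leading terms so that one is left with genuinely multilinear expressions amenable to the orthogonality argument; without that algebraic structure the decoupling would fail.
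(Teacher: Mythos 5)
Your proposal is correct and follows essentially the same route as the paper: the same approximate solution $v_n^k$, the same decomposition of $\glei(v_n^k)$ into the Euclidean correction terms (handled via Proposition~\ref{small scale prop}) plus cross-interactions (handled via the trigonometric inequalities and the pseudo-orthogonality~\eqref{parameters diverge}), the same $j_0$-splitting for the uniform-in-$k$ $S$-norm bound, and the same final appeal to the Perturbation Lemma. No gaps.
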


\subsection{Construction of a Critical Element}\label{CE}

We now turn to the proof of Theorem~\ref{u main}, which  we recall is the reformulation of Theorem~\ref{main} in terms of $u$, where $ru = \psi$ and $\psi$ is the equivariant azimuth angle. We follow the concentration compactness/rigidity methodology of Kenig and Merle introduced in \cite{KM06, KM08}. 

In this subsection we assume that the theorem fails and we construct a minimal non-scattering  solution, referred to as a critical element. We rely heavily on concentration compactness techniques. The refined construction presented here has its roots in the work of Kenig and Merle, \cite{KM10, KM11a, KM11b, DKM5}. Also relevant here is  the work of Ionescu, Pausader, Staffilani \cite{IPS} on the Schr\"odinger equation on $\mathbb{H}^3$ where the issue of two types of profiles also arises. 

We begin with some notation, following \cite{KM10} for convenience. For initial data $(u_0, u_1) \in \HH$ we denote by $\vec u(t) \in \HH$ the unique solution to~\eqref{u eq} with initial data $\vec u(0) = (u_0, u_1)$ defined on its maximal interval of existence \\$I_{\max}(\vec u) := (T_-( \vec u), T_+( \vec u))$.  For $A>0$ define 
\EQ{
B(A):= \{ (u_0, u_1) \in \HH  \, \, : \, \,      \|\vec u(t)\|_{L^{\infty}_t([0, T_+( \vec u); \HH)} \le A\}.
}
\begin{defn} We say that $\mathcal{SC}(A)$ holds if for all $ \vec u = (u_0, u_1) \in B(A)$ we have $T_+( \vec u) = + \infty$ and $ \|u \|_{S([0, \infty))} < \infty$. We also say that $\mathcal{SC}(A;  \vec u)$ holds if $\vec u \in B(A)$, $T_+( \vec u) = + \infty$ and $ \|u \|_{S([0, \infty))} < \infty$. 
\end{defn} 
\begin{rem}
Recall from Proposition~\ref{small data} that $\| u \|_{S([0, + \infty))}< \infty$ if and only if $\vec u$ scatters to a free wave as $t \to \infty$. Therefore Theorem~\ref{u main} (and hence Theorem~\ref{main}) is equivalent to the statement that $\mathcal{SC}(A)$ holds for all $A>0$. 
\end{rem} 

Now suppose that Theorem~\ref{u main} {\em fails}. By the small data theory, i.e., Proposition~\ref{small data}, there is an $A_0>0$ small enough so that $\mathcal{SC}(A_0)$ holds.  Give that we are assuming that Theorem~\ref{u main} fails we can find a critical value $A_C$ so that for $A<A_C$, $\mathcal{SC}(A)$ holds, and for $A>A_C$, $\mathcal{SC}(A)$ fails. Note that $A_C >A_0>0$. The remainder of this section is devoted proving the following proposition and its corollary below.

\begin{prop}\label{crit elem} Suppose that Theorem~\ref{u main} fails and let $A_C$ be defined as above. Then, there exists a solution $\vec u^*(t) \in \HH$ with defined on  $ [0, + \infty)$ so that $\mathcal{SC}(A_C, \vec u^*)$ fails. Moreover, the set 
\EQ{ \label{K+}
\K_+= \{ \vec u^*(t) \mid t \in [0, \infty)\} \subset \HH
}
is pre-compact in $\HH$. 
\end{prop}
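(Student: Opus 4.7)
The plan is to follow the concentration--compactness scheme of Kenig--Merle, adapted to the two-profile structure (Euclidean vs.\ Adkins--Nappi) and to the fact that $\HH$ is not scaling invariant. First I would extract a critical sequence: by the failure of Theorem~\ref{u main} and the definition of $A_C$, there exist solutions $\vec u_n \in B(A_C+1/n)$ with maximal forward interval $[0,T_+^n)$ and $\|u_n\|_{S([0,T_+^n))} = +\infty$. I would then apply the linear Bahouri--Gerard decomposition (Proposition~\ref{BG}) to $\vec u_n(0)$ to produce Euclidean profiles $V^j_{e,L}$ (with scales $\la_{n,j}\to 0$) and Adkins--Nappi profiles $U^\ell_{a,L}$ (scale identically $1$), together with the nonlinear counterparts $V^j_e$ and $U^\ell_a$ constructed in Section~\ref{nonlinear profiles}.

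Next I would show that exactly one profile survives. If two or more profiles were nontrivial, then combining the asymptotic $\HH$-orthogonality~\eqref{HH orth} at $t=0$ with the nonlinear profile decomposition (Proposition~\ref{nonlin profile}), propagated in time using the parameter pseudo-orthogonality~\eqref{parameters diverge}, each nonlinear profile would inherit a uniform $L^\infty_t\HH$ bound of at most $A_C - \de$ for some fixed $\de>0$. The Adkins--Nappi profiles would then scatter by minimality of $A_C$, and the Euclidean profiles would scatter by Theorem~\ref{euc thm} applied to the defocusing quintic model. Substituting these finite $S$-bounds back into Proposition~\ref{nonlin profile} on $[0,T_+^n)$ would force $\limsup_n \|u_n\|_{S([0,T_+^n))} < \infty$, a contradiction.

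I would then rule out that the unique surviving profile is Euclidean: if it were $V^1_e$ with $\la_{n,1}\to 0$, Theorem~\ref{euc thm} would give $\|V^1_e\|_{S(\R)} < \infty$ (its $\dot H^2\times\dot H^1$ bound being inherited from $\vec u_n$), and the small-scale approximation Proposition~\ref{small scale prop} would transfer this bound to $\|u_n\|_{S([0,T_+^n))} < \infty$, again a contradiction. Hence the lone profile is Adkins--Nappi, and I would set $\vec u^* := \vec U^1_a$ after translating so that $t_{n,1}=0$. Proposition~\ref{nonlin profile} together with the perturbation lemma then give $\vec u_n(0)\to \vec u^*(0)$ in $\HH$, and by construction $\vec u^*$ fails $\mathcal{SC}(A_C)$.

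For the pre-compactness of $\K_+$ and the conclusion $T_+(\vec u^*)=+\infty$, I would take an arbitrary sequence $\{\tau_n\}\subset[0,T_+(\vec u^*))$ and apply the preceding single-profile argument to the translated solutions $\vec u^*(\cdot+\tau_n)$, which again form a critical sequence; the same exclusion of Euclidean profiles and of multi-profile decompositions yields subsequential convergence of $\vec u^*(\tau_n)$ in $\HH$. This pre-compactness, combined with the local well-posedness theory of Proposition~\ref{small data}, rules out finite-time blowup, since any subsequential limit at a putative blowup time would allow one to extend $\vec u^*$ past $T_+$. The main technical obstacle I anticipate is the careful bookkeeping required by the two-profile dichotomy: the non-scaling invariance of $\HH$ means the orthogonality splits cleanly only if one controls each Euclidean profile in the critical $\dot H^2\times\dot H^1$ subnorm (triggering Theorem~\ref{euc thm}) while controlling each Adkins--Nappi profile in the full $\HH$ norm (triggering the minimality of $A_C$), and one must uniformly handle the $k\to\infty$ limit of the nonlinear profile sum using the small-data bound~\eqref{global small}.
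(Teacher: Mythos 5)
Your proposal is correct and follows essentially the same Kenig--Merle route as the paper: extract a minimizing sequence, apply the two-type (Euclidean/Adkins--Nappi) profile decomposition, use $\HH$-orthogonality to force a single surviving profile, exclude the Euclidean case via Theorem~\ref{euc thm} (a non-scattering Euclidean profile would have unbounded critical norm, contradicting the bound inherited from $\vec u_n$), and rerun the argument on $\vec u^*(\tau_n)$ for compactness. The one device you leave implicit --- evaluating the orthogonality at times where the $\HH$ norm of the worst profile is nearly maximized on truncated intervals $[0,T_+-1/k)$, which is what makes the ``inheritance'' of the bound $A_C-\de$ rigorous before one knows the profiles are global --- is exactly how the paper closes that loop, so this is a matter of detail rather than of approach.
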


\begin{cor}\label{global crit} Suppose that Theorem~\ref{u main} fails. Let $\vec u^*(t) \in \HH$ be the critical element constructed in Proposition~\ref{crit elem}. Then there is (a possibly different) nonzero critical element $\vec u_{\infty}(t) \in \HH$ which is global in both time directions. Moreover 
\EQ{
\| u_{\infty} \|_{S( [0, + \infty))} = \| u_{\infty}\|_{S(( - \infty, 0])} = \infty
}
and the set 
\EQ{\label{K}
\K= \{  \vec u_{\infty}(t) \mid t \in \R \} \subset \HH
}
is pre-compact in $\HH$. 
\end{cor}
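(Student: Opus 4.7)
The plan is to build $\vec u_\infty$ as a subsequential limit of forward time-translates of the critical element. Choose any sequence $t_n \to +\infty$; since $\K_+$ is pre-compact in $\HH$, we may pass to a subsequence with $\vec u^*(t_n) \to \vec w_0$ in $\HH$, and let $\vec u_\infty$ be the unique solution to~\eqref{u eq} with $\vec u_\infty(0) = \vec w_0$ on its maximal interval $I_{\max}(\vec u_\infty)$. Note first that $\vec w_0 \neq 0$: otherwise $\|\vec u^*(t_n)\|_\HH$ would drop below the small-data threshold of Proposition~\ref{small data} for large $n$, which combined with the local Strichartz bound $\|u^*\|_{S([0,t_n])} < \infty$ would give $\|u^*\|_{S([0,\infty))} < \infty$, contradicting the failure of $\mathcal{SC}(A_C; \vec u^*)$.

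Next, apply Corollary~\ref{pert cor} with reference $\vec v = \vec u_\infty$ and approximants $\vec u = \vec u^*(\cdot + t_n)$ on any compact sub-interval $J \subset I_{\max}(\vec u_\infty)$ containing $0$ (on which $\|u_\infty\|_{S(J)} < \infty$ by the local theory): the data convergence $\|\vec u^*(t_n) - \vec w_0\|_\HH \to 0$ yields $\|u^*(\cdot + t_n) - u_\infty\|_{S(J)} + \|\vec u^*(\cdot + t_n) - \vec u_\infty\|_{L^\infty_t(J; \HH)} \to 0$. Hence for each $t \in I_{\max}(\vec u_\infty)$ one has $\vec u^*(t_n + t) \to \vec u_\infty(t)$ in $\HH$; since $t_n + t \ge 0$ for $n$ large, $\vec u_\infty(t) \in \overline{\K_+}$ and $\|\vec u_\infty(t)\|_\HH \le A_C$. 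A standard maximality argument now forces $I_{\max}(\vec u_\infty) = \R$: if, say, $T_+(\vec u_\infty) < \infty$, pick $t_m \uparrow T_+(\vec u_\infty)$ and use compactness of $\overline{\K_+}$ to extract $\vec u_\infty(t_m) \to \vec z$ in $\HH$; applying local well-posedness to $\vec z$ then extends $\vec u_\infty$ past $T_+(\vec u_\infty)$, a contradiction. The same reasoning excludes $T_-(\vec u_\infty) > -\infty$. Consequently $\vec u_\infty$ is global, $\vec u_\infty \in B(A_C)$, and $\K \subset \overline{\K_+}$ is pre-compact.

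It remains to rule out scattering in both time directions. If $\|u_\infty\|_{S([0, \infty))} < \infty$, Corollary~\ref{pert cor} applied on $I = [0, \infty)$ gives $\|u^*\|_{S([t_n, \infty))} < \infty$ for $n$ large, which together with $\|u^*\|_{S([0, t_n])} < \infty$ forces $\|u^*\|_{S([0, \infty))} < \infty$, contradicting the failure of $\mathcal{SC}(A_C; \vec u^*)$; hence $\|u_\infty\|_{S([0, \infty))} = \infty$. For the backward direction, suppose $\|u_\infty\|_{S((-\infty, 0])} < \infty$, so that $\vec u_\infty$ scatters at $-\infty$ to a radial free wave $\vec v_L^-$. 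Pick $T_m \to -\infty$; by pre-compactness of $\K$ extract a subsequence $\vec u_\infty(T_m) \to \vec z$ in $\HH$. Since $\vec v_L^-(T_m) = \vec u_\infty(T_m) + o_\HH(1)$, it follows that $\vec v_L^-(T_m) \to \vec z$ strongly in $\HH$. On the other hand, standard dispersion for radial free waves in $\HH(\R^5)$ gives $\vec v_L^-(T_m) \rightharpoonup 0$ weakly, so $\vec z = 0$ and $\|\vec u_\infty(T_m)\|_\HH \to 0$. Small data theory then produces $\|u_\infty\|_{S([T_m, \infty))} < \infty$ for $m$ large, contradicting $\|u_\infty\|_{S([0, \infty))} = \infty$.

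The main obstacle is the interplay between the perturbation lemma and the pre-compactness of $\K_+$: the latter is what upgrades the subsequential weak limit naturally supplied by the concentration-compactness framework into the strong $\HH$-limit needed both to define $\vec u_\infty$ and to control it in backward time. The most delicate step is the backward non-scattering argument, which pits the strong compactness of $\K$ against the weak dispersive decay of the free $5d$ radial wave flow; the remaining steps are direct applications of the perturbation estimate combined with the compactness of $\overline{\K_+}$.
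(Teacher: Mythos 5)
Your proposal is correct and follows essentially the same route as the paper: the paper's own (very terse) proof extracts $\vec u_\infty$ as a strong $\HH$-limit of the forward translates $\vec u^*(n)$ using the pre-compactness of $\K_+$, rules out $\vec u_\infty = 0$ via the small data theory exactly as you do, and then asserts globality, non-scattering in both directions, and pre-compactness of $\K$ by the same perturbation-lemma-plus-compactness arguments you spell out. The only cosmetic difference is in the backward non-scattering step, where you invoke weak dispersive decay of free waves against the strong compactness of $\K$, while the paper's "by construction" can also be read as the uniform-in-$n$ perturbation bound $\|u^*\|_{S([0,n])} \le \|u_\infty\|_{S([-n,0])} + C(A)\e$; both are standard and valid.
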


\begin{proof} The proof generally follows the framework established in \cite[Proof of Proposition~$3.3$]{KM10}, with the main novelty here being the use of two different types of profiles. We give a fairly detailed sketch of the argument, but at times we refer the reader to \cite{KM10} where the now standard arguments apply, and rather focus our attention on how the argument must be adapted to account for the presence of both Adkins-Nappi and Euclidean profiles in our profile decompositions.   

By the definition of $A_C$ we can find $A_n \searrow A_C$ together with initial data $\vec u_n(0) = (u_{n, 0}, u_{n, 1}) \in \HH$ with corresponding solutions $\vec u_n(t) \in \HH$ to \eqref{u eq} so that 
\EQ{
  \sup_{t \in [0, T_+( \vec u_n))} \| \vec u_n \|_{\HH} \le A_n,  \mand
\| u_n \|_{S([0, T_+(\vec u_n))} = + \infty
}
By Lemma~\ref{BG}, and \eqref{AN BG} we have the following profile decomposition for the sequence  $\vec u_n(0)$, 
\EQ{ \label{BG ce}
 &u_{n, 0} = \sum_{1\le j<K}  \frac{1}{\la_{n,j}^{\frac{1}{2}}}V^j_{e, L}\left(\frac{-t_{n,j}}{  \la_{n,j}}, \frac{ \cdot}{ \la{_n,j}}\right) + \sum_{1\le \ell<K}  U^{\ell}_{a,L}\left(-t_{n,\ell}, \cdot \right) + \ga_{n, 0}^K, \\
 &  u_{n, 1} =  \sum_{1\le j<K}  \frac{1}{ \la_{n,j}^{\frac{3}{2}}}\p_tV^j_{e, L}\left(\frac{-t_{n,j}}{  \la_{n,j}}, \frac{ \cdot}{ \la_{n,j}}\right)+ \sum_{1\le \ell<K}  \p_tU^{\ell}_{a,L}\left(-t_{n,\ell}, \cdot \right)  + \ga_{n, 1}^K.
 } 
 with $\vec \gamma_n^K(t) = S(t) ( \ga_{n, 0}^K, \ga_{n, 1}^K)$,  $t_{n, j}$, $ \la_{n, j}$ and $t_{n, \ell}$ as in Lemma~\ref{BG}. We also have the corresponding nonlinear profiles as in Section~\ref{nonlinear profiles}, and we will continue to use the notation 
 \ant{
 V_{e, n}^j(t, r):= \frac{1}{ \la_{n, j}^{\frac{1}{2}}} V^j_{e}\left( \frac{ t- t_{n, j}}{ \la_{n, j}}, \frac{r}{ \la_{n, j}} \right)
}
for the Euclidean nonlinear profiles, and 
\ant{
U_{a, n}^{\ell}(t, r):=   U_a^{\ell}(t - t_{n, j}, r)
}
for the Adkins-Nappi nonlinear profiles. 

The goal will be to show that, in fact, there is only one nonzero  Adkins-Nappi profile in~\eqref{BG ce} and no nonzero Euclidean profiles. 

To begin we quickly note that only finitely many nonlinear profiles can be non-scattering. Indeed,  by the Pythagorean decomposition of the $\HH$ norm in~\eqref{HH orth}, we can apply Proposition~\ref{small data}, as well as the small data theory for~\eqref{defoc v eq}, and find $K_0>0$ so that for all $j, \ell > K_0$ we have $$I_{\max}( \vec V_e^j) = I_{\max}(\vec U_a^{\ell})  = (- \infty, + \infty)$$ and there exists $C>0$ so that  
\ant{
 \|U_a^{\ell} \|_{X(\R)} \le C\|\vec U_{a, L}(0)\|_{\HH}, \mand \|V_e^j\|_{X(\R)} \le C \|\vec V_{e, L}(0) \|_{\HH}
 }
where again $X(I) := L^{\infty}_t(I; \HH) \cap S(I)$. 

Next, we show that at least one of the nonlinear profiles must be non-scattering. We accomplish this in the following claim. 

\begin{claim}\label{bad profiles} It is impossible that for all $1 \le j \le K_0$ and $1 \le \ell \le K_0$ we have 
\EQ{ \label{all scat profiles}
 \| V_{e}^{j} \|_{S \left(-\frac{t_{n, j}}{ \la_{n, j}}, T_+( \vec V^j_e)\right)}< \infty, \mand  \|U_{a}^{\ell}\|_{S(-t_{n, \ell}, T_+( \vec U_a^{\ell}))} < \infty.
 }
  \end{claim}
  To prove the claim, we note that if~\eqref{all scat profiles} holds, then by~\eqref{ftbuc} we have for all $j, \ell$,  $T_+( \vec V_e^j) = T_+( \vec U_a^{\ell}) = + \infty$. We then apply Proposition~\ref{nonlinear profiles} with the time sequence  $s_n  = \infty$ for all $n$, which then says that for $n$ large enough, $u_n$ is global and 
  \ant{
    \limsup_{n \to \infty} \| u_n\|_{S([0, \infty))} \le C < \infty,
   }
   which contradicts the definition of the $\vec u_n$, establishing Claim~\ref{bad profiles}.
   
   By Claim~\ref{bad profiles}, after rearranging the profiles, we can then find $1 \le J_1   \le K_0$ and $1 \le L_1 \le K_0$ so that  
   for all $1 \le j \le  J_1 $ we have 
   \ant{
   \|V_e^j\|_{S \left( - \frac{t_{n, j}}{ \la_{n, j}}, T_+( \vec V_e^j) \right)} =  \infty
   }
   and for all $1 \le \ell \le L_1$ we have 
   \ant{
   \|U_a^{\ell}\|_{S \left( - t_{n,  \ell}, T_+( \vec U_a^{\ell}) \right)} =  \infty
}
Moreover, for all $j > J_1$, respectively $ \ell> L_1$, we have $T_+(\vec V_e^j) = + \infty$, respectively, $T_+( \vec U_a^{\ell}) = +\infty$, and 
\ant{
 &\|V_e^j\|_{S \left( - \frac{t_{n, j}}{ \la_{n, j}}, T_+( \vec V_e^j) \right)} < \infty,  \, \quad \textrm{respectively},\\
 & \|U_a^{\ell}\|_{S \left( - t_{n,  \ell}, T_+( \vec U_a^{\ell}) \right)} < \infty.
}
Next, we define sequences of times: 
\EQ{
&T^+_{j, k} := \begin{cases} T_+( \vec V_{e}^j) -  \frac{1}{k} &\mif T_+(\vec V_e^j)< \infty \\
k &\mif  T_+(\vec V_e^j)= \infty 
\end{cases}
 \\
&T^+_{\ell, k} := \begin{cases} T_+( \vec U_{a}^{\ell}) -  \frac{1}{k} &\mif T_+(\vec U_a^{\ell})< \infty \\
k &\mif  T_+(\vec U_a^{\ell})= \infty 
\end{cases}
 } 
Then we define $s_{j, k}^n$, resp. $s_{\ell, k}^n$, by 
\EQ{
&\frac{ s_{n, j}^k - t_{n, j}}{\la_{n, j}} = T^+_{j, k}\\
& s_{n, \ell}^k - t_{n, \ell}  = T^+_{\ell, k}
}
Finally, we set 
\EQ{
s_n^k := \min_{1 \le j \le J_1, \, \, 1 \le \ell \le L_1} \{ s_{n, j}^k, \, s_{n, \ell}^k\}
}
The reason for this definition is that by construction, for all $n$ large, for all $j \ge 1$ and for all $ \ell \ge 1$ we have 

\ant{
&\frac{ s_{n}^k - t_{n, j}}{\la_{n, j}} < T_{+}(\vec V_e^j) \, \mand   \|V_e^j\|_{S \left( - \frac{t_{n, j}}{ \la_{n, j}},\, \frac{ s_{n}^k - t_{n, j}}{\la_{n, j}}  \right)} < \infty,\\
& s_{ n}^k - t_{n, \ell}  <T_+( \vec U_a^{\ell}), \mand \|U_a^{\ell}\|_{S \left( - t_{n,  \ell}, \,  \, s_{n}^k - t_{n, \ell} \right)} < \infty.
}
Therefore the sequence $s^k_n$ satisfies the conditions of Proposition~\ref{nonlin profile}. In fact, 
the following nonlinear profile decomposition holds on the interval  
$ t \in [0, s_n^k)$, 
\EQ{ 
 u_{n}(t,r) &= \sum_{1\le j<K}  V^j_{e, n}(t, r) + \sum_{1\le \ell<K}  U^{\ell}_{a,n}(t,r)  + \ga_{n}^{K}(t) + \eta_{n}^{K}(t), 
 } 
where for each $k \in \N$, 
\ant{
&\lim_{K \to \infty} \limsup_{n \to \infty} \| \eta_n^K\|_{S([0,  s^{k}_n))} + \|  \vec \eta_n^{K}\|_{L^{\infty}_t([0, s_{n}^k); \HH)}  = 0, \\
&\lim_{K \to \infty} \limsup_{n \to \infty} \| \ga_{n}^K  \|_{S([0, s^{k}_n))} =0.
}
Next, we claim that there exists a subsequence $k_{\al}  \to \infty$ so that {\em either} there exist $\ell_0$ with $1 \le \ell_0 \le L_1$ with 
\EQ{\label{l case}
 s_n^{k_{\al}} = s_{n, \ell_0}^{k_{\al}} \quad \textrm{for all} \, \, \al,
 }
 {\em  and/or} there exists $j_0$ with $1 \le j_0 \le J_1$ so that 
 \EQ{\label{j case}
 s_n^{k_{\al}} = s_{n, j_0}^{k_{\al}} \quad \textrm{for all} \, \, \al.
 }
This follows directly from the pigeonhole principle since both $L_1$ and $J_1$ are finite so either~\eqref{l case} holds for infinitely many $k$'s and/or~\eqref{j case} holds for  infinitely many $k$'s. Thus we have potentially two cases to study. 
\begin{flushleft} \textbf{Case $1$:} Assume \eqref{l case} holds so that $\ell_0$ corresponds to an Adkins-Nappi profile $ \vec U_a^{\ell_0}$. We note that for $n$ large enough
\end{flushleft}
\EQ{
\|U_a^{\ell_0}\|_{S(-t_{n, \ell_0}, T_+( \vec U_{a}^{\ell_0}))} =  \infty.
}
Also, recall that by construction, either $t_{n, \ell_0} = 0$ for all $n$ or, $-t_{n, \ell_0} \to - \infty$. In either case, $0   \ge  -t_{n, \ell_0}$ for $n$ large enough so that we have 
\ant{
\|U_a^{\ell_0}\|_{S([0, T_+( \vec U_{a}^{\ell_0}))} =  \infty.
}
By the definition of $A_C$ and since $ \vec U_{a}^{\ell_0}$ is an Adkins-Nappi nonlinear profile, i.e., solution to~\eqref{u eq}, we have 
\EQ{\label{A big}
A^2 := \sup_{t \in [0, T_+( \vec U^{\ell_0}_a))} \|  \vec U^{\ell_0}_a(t) \|_{\HH}^2 \ge A_C.
}
Observe that by the definition of the $T^+_{\ell_0, k}$, 
\ant{
A_k^2 := \sup_{t \in [0, T^+_{\ell_0, k})} \| \vec U_{a}^{\ell_0}(t) \|_{\HH}^2
} 
satisfies $\lim_{k \to \infty} A_k^2 = A^2$. The point is that for each $k$,  we can find $T_{\ell_0, k} \in [0, T_{\ell_0, k}^+]$ so that 
\ant{
A_k^2 = \| \vec U_{a}^{\ell_0}(T_{\ell_0, k}) \|_{\HH}^2.
}
We now define $\tau_{n, \ell_0}^k$ so that
\ant{
\tau_{n, \ell_0}^k -t_{n, \ell_0}  = T_{\ell_0, k},
}
and we remark that for $n$ large enough, $\tau_{n, \ell_0}^k \ge 0$, and $\tau_{n, \ell_0}^k \le s_{n, \ell_0}^k$.  Since $s_{n, \ell_0}^{k_{\al}} = s_{n}^{k_{\al}}$ for all $\al$, the nonlinear profiles $\vec U_{a,n}^{\ell}(\tau_{n, \ell_0}^{k_{\al}})$, $ \vec V_{e, n}^{j}( \tau_{n, \ell_0}^{k_{\al}})$ are defined for all $ \ell \ge 1$ and $j \ge 1$.  

We make the following observation: For every $\al$ with $k_{\al}$ fixed and  fixed $K$ large enough we have the following almost orthogonality
\EQ{ \label{ortho at tau}
\|\vec u_n(\tau_{n, \ell_0}^{k_{\al}})\|_{\HH}^2 &= \sum_{1 \le j < K} \|\vec V_{e, n}^{j}( \tau_{n, \ell_0}^{k_{\al}}) \|_{\HH}^2 + \sum_{1 \le \ell < K} \|\vec U_{a, n}^{\ell}( \tau_{n, \ell_0}^{k_{\al}}) \|_{\HH}^2 \\
& \quad + \|\vec \gamma_n^K( \tau_{n, \ell_0}^{k_{\al}}) \|_{\HH}^2 +o_n(1) \mas n \to \infty.
}
The claim follows directly from the orthogonality of the parameters in~\eqref{parameters diverge} and for a detailed argument we refer the reader to \cite[Proof of $(3.22)$]{KM10}. Using~\eqref{ortho at tau} we can deduce that 
\ant{
A_{n}^2 \ge A_{k_{\al}}^2 + o_n(1) \mas n \to \infty.
}
Letting $n \to \infty$ we see that $A_C^2 \ge A_{k_{\al}}^2$. Then we let $k_{\al} \to \infty$ to obtain $A_{C}^2 \ge A^2$. Combining this \eqref{A big} we see that 
\ant{
A = A_C.
}
In fact, this also shows that all of the other profiles must be $ \equiv 0$ and that $\vec \ga_n^K  \to 0$ in $\HH$ as $n \to \infty$. To see this, suppose that for some $m \neq \ell_0$ we have a nonzero profile. To not have to distinguish between Euclidean and Adkins-Nappi, we will simply denote this nonzero profile by $\vec U^{m}$. For any $ \eps>0$ we can choose $k_{\al}$ so large that $\abs{A_k^2 - A_C^2}< \eps$. Then by~\eqref{ortho at tau} we have 
\ant{
A_n^2 \ge A_{C}^2  - \eps + \|\vec U^m_n(\tau_{n, \ell_0}^{k_{\al}}) \|_{\HH}^2 + o_n(1).
}
Taking $n$ arbitrarily large we can make $\|\vec U^m_n(\tau_{n, \ell_0}^{k_{\al}}) \|_{\HH}^2$ arbitrarily small. By the small data theory, i.e. Proposition~\ref{small data} or the corresponding small data result if the profile is Euclidean, we can then make $\sup_{t \in \R} \|\vec U^m(t) \|_{\HH}$ arbitrarily small, which means that the corresponding  linear profile $\vec U^m_{L}$ must be identically zero, as desired. A similar argument shows the vanishing of the error $\vec \ga^K_n$ as $n \to \infty$ in $\HH$. 

Thus, there can only be one nonzero profile $\vec U_{a}^{\ell_0}$. 

We next show that this is in fact, the only possibility, i.e., that~\eqref{l case} must hold and we cannot have~\eqref{j case} holding but~\eqref{l case} not holding. 
\begin{flushleft} \textbf{Case $2$:} Assume \eqref{j case} holds so that $j_0$ corresponds to a Euclidean profile $ \vec V_e^{j_0}$. We show that this is in fact impossible. To see this note that for $n$ large enough
\end{flushleft}
\EQ{
\|V_e^{j_0}\|_{S(-\frac{t_{n, j_0}}{ \la_{n, j_0}}, T_+( \vec V_{e}^{j}))} =  \infty.
}
By Theorem~\ref{euc thm} we must then have 
\EQ{
\sup_{t \in \left(-\frac{t_{n, j_0}}{ \la_{n, j_0}}, T_+( \vec V_{e}^{j})\right)} \| \vec V_e^{j_0}(t) \|_{\HH} = +\infty.
}
We define $A_k$, $T_{j_0, k}$, and $\tau_{n, j_0}^k$ exactly as in Case $1$ so that 
\ant{
& \|\vec V_e^{j_0}(T_{j_0, k})\|_{\HH}^2 = A_k^2 \to \infty,\\
&\frac{ \tau_{n, j_0}^k - t_{n, j_0}}{ \la_{n, j_0}} = T_{j_0, k}.
}
 We again have the orthogonality as in~\eqref{ortho at tau}, only now at time $\tau_{n, j_0}^k$. But this yields  
 \ant{
 A_n^2 \ge A_{k_{\al}}^2 + o_n(1) .
 }
 Letting $n  \to \infty$ yields $A_C \ge A_{k_{\al}}$ which is a contradiction since $A_C$ is assumed to be finite, but $A_{k_{\al}}^2 \to \infty$  as $k_{\al} \to \infty$. Therefore~\eqref{j case} cannot happen and~\eqref{l case} must happen. Which means that there is  one nonzero profile, namely $U_{a}^{\ell_0}$ from Case~$1$.
 
 We  let $\vec u^*(t) :=  \vec U_a^{\ell_0}(t)$ be the critical element. We still need to show the compactness property~\eqref{K+}. 

Set 
\EQ{\label{K+ 1}
\K_{+}:= \{ \vec u^*(t) \mid t \in[0, T_+(\vec u^*)\} \subset \HH.
}
We first show that the trajectory $\K_+$ is pre-compact in $\HH$ . If the pre-compactness of~\eqref{K+ 1} fails, then there exists a $\de>0$ so that for some sequence of times $t_n \to T_+(\vec u^*)$ we have 
\EQ{ \label{noncompact} 
\| \vec u^*(t_n) - \vec u^*(t_m)\|_{\HH} > \de, \quad \forall m>n.
}
We can apply Lemma~\ref{BG} to the sequence $\vec u^*(t_n)$ and conclude by the same arguments as before that there can only be one nonzero profile and it must be Adkins-Nappi. Thus we have 
\EQ{ \label{one prof} 
 \vec u^*(t_n) = \vec U_{L}(\tau_n) + \vec \ga_{n}(0)
 }
 where $\vec U_L(t)$ and $\vec  \ga_n(t)$ are free waves, $\tau_n$ is some sequence of times and $\| \vec \ga_n \|_{\HH}  \to 0$ in $\HH$ as $n \to \infty$.  
 If  $\tau_n \to \tau_{\infty} \in \R$ then~\eqref{noncompact} and~\eqref{one prof} lead to an immediate contradiction. If $\tau_n \to + \infty$ then 
 \ant{
 \|U_{L}( \cdot + \tau_n) \|_{S([0, \infty))} \to 0 \mas n \to \infty,
 } 
 which implies, via~\eqref{one prof} and the local well-posedness theory that $$\| u^*( \cdot + t_n)\|_{S([0, \infty))} \le C < \infty$$ for large $n$, which is a contradiction. Finally, assume $\tau_n \to - \infty$. But then 
 \ant{
 \|U_{L}( \cdot + \tau_n) \|_{S([- \infty, 0])} \to 0 \mas n \to \infty,
 }
 which implies that 
 \ant{
 \| u^*( \cdot + t_n)\|_{S([- \infty, 0])} \le C < \infty
 }
 for all large $n$ for some fixed constant $C>0$. However, this is a  contradiction as well since $t_n \to T^+( \vec u^*)$.  Therefore~\eqref{noncompact} must be false and therefore~\eqref{K+ 1} is pre-compact as desired. 
 
 Next we claim that in fact $T_+( \vec u^*) = + \infty$. To see this, suppose $T_+( \vec u^*)< \infty$. Then let $t_n \to T_+( \vec u^*)$ be any sequence. Since $\K_+$ is pre-compact we can find a subsequence, still denoted by $t_n$  so that 
 \ant{
  \vec u^*( t_n) \to \vec u_{\infty} \, \,  \textrm{in} \, \,  \HH
  }
 Denote by $\vec u_{\infty}(t)$ the solution to~\eqref{u eq} with initial data  $\vec u_{\infty}$ at the initial time $T_+(  \vec u^*)$. By the local well-posedness theory, we can choose $\de>0$ so that $\vec u_{\infty}(t)$ exists on the interval $I_{\de} = (T_+( \vec u^*) - \de, T_+( \vec u^*) + \de)$ and satisfies 
 \ant{
 \| u_\infty \|_{S(I_{\de})} \le C < \infty
 }
 Now, define $\vec h_n(t)$ to be the solution to~\eqref{u eq} with initial data at time $T_+( \vec u^*)$ which is equal to  $\vec u^*(t_n)$. But then, by the Perturbation Lemma, i.e., Lemma~\ref{perturbation}, we can conclude that for $n$ large enough, $\vec h_n(t)$ exists on the interval $$I_{\de/2}= (T_+( \vec u^*)- \frac{\de}{2}, T_+( \vec u^*) + \frac{\de}{2})$$ and satisfies 
 \EQ{ \label{bounded h}
 \limsup_{n \to \infty} \| h_n\|_{S(I_{\de/2})} < \infty.
 }
 However, note that by construction $h_n(t)=\vec u^*(t_n + t- T_+( \vec u^*))$ and so~\eqref{bounded h} contradicts~\eqref{ftbuc} as we are assuming $T_+( \vec u^*)< \infty$.  Thus $T_+( \vec u^*) = + \infty$. 
 This completes the proof. 
\end{proof}

 Lastly we would like to pass to a new nonzero critical element that is global in both time directions and has a pre-compact  trajectory as both $t \to  \pm \infty$ as well.  This is the content of Corollary~\ref{global crit}
 \begin{proof}[Proof of Corollary~\ref{global crit}] 
 Let $\vec u^*(t)$ be the critical element in Proposition~\ref{crit elem}. We extract the new global critical element from the sequence 
 \ant{
\{ \vec u^*(n)\}_{n = 1}^{\infty} \subset \K_+ \subset \HH.
}
Since we know that $\|u^*\|_{S([0, \infty))} =  \infty$ we can find $A_0>0$ so that  $\| \vec u^*(n) \|_{\HH} \ge A_0$. If we couldn't do this we could use the small data theory to prove that $\vec u^*$ scatters at $t=+ \infty$, which would be a contradiction.  

By the pre-compactness of $\K_+$ we can find $\vec u_{\infty}$ so that 
\ant{
 \vec u^*(n) \to \vec u_{\infty} \in \HH.
 }
  By construction the $S$-norm of the evolution $\vec u_{\infty}(t)$ is infinite in both time directions, and the same argument as in the proof of Proposition~\ref{crit elem} shows the pre-compactness of $\mathcal{K}$.
  \end{proof}


\section{Rigidity Argument}\label{rigidity}
In this section we complete the proof of Theorem~\ref{u  main} and the equivalent result Theorem~\ref{main} by showing that the critical element constructed in Corollary~\ref{global crit} cannot exist. This is done by way of a rigidity result, which excludes the possibility of nonzero, pre-compact trajectories as in~\eqref{K}. 
\begin{prop}\label{rigid}Let $\vec u(t) \in \HH(\R^5)$ be a global solution to~\eqref{u eq} and suppose that the trajectory 
\EQ{
K:=\{\vec u(t) \mid t \in \R\}
}
is pre-compact in $\HH(\R^5)$. Then $\vec u(t) = (0, 0)$. 
\end{prop}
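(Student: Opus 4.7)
The plan is to adapt the exterior-energy (\emph{channels of energy}) rigidity scheme of~\cite{KLS}, itself built on~\cite{DKM4,DKM5}, to the Adkins-Nappi equation~\eqref{u eq}. The rough idea is that a non-trivial free radial wave in $\R^{1+5}$ must radiate a definite fraction of its initial exterior energy into a future or past exterior light cone, whereas pre-compactness of $\K$ forces the Duhamel (nonlinear) contribution to $\vec u$ in that same exterior cone to be negligible. Together, these will force the exterior data of $\vec u(t_0)$ (after subsequential limits) to coincide with one of the stationary profiles $\fy_\al$ produced in Proposition~\ref{ode prop}, and then the classification in Lemma~\ref{0 lem} will force that profile, and hence $\vec u$, to vanish identically.

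More concretely, I would proceed in four steps. Step~1: pre-compactness of $\K$ in $\HH$ yields, for every $\e>0$, a radius $R(\e)>0$ such that
$$ \sup_{t\in\R} \|\vec u(t)\|_{\HH(|x|>R(\e))} < \e. $$
Step~2: for fixed $t_0\in\R$ and $R>0$, decompose $\vec u(t) = \vec u_L^{t_0}(t) + \vec w^{t_0}(t)$, where $\vec u_L^{t_0}(t) := S(t-t_0)\vec u(t_0)$ is a free wave and $\vec w^{t_0}(t_0)=0$; then invoke the exterior energy estimate of Proposition~\ref{linear prop} to bound the radiated energy of $\vec u_L^{t_0}$ on the exterior cone $\{|x|>R+|t-t_0|\}$ from below in terms of the component of the exterior data $\vec u(t_0)$ orthogonal to an explicit finite-dimensional subspace of stationary modes, the latter being dictated by the $r\to\infty$ asymptotics $\fy_\al(r)\sim\al r^{-2}$ from Proposition~\ref{ode prop}. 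Step~3: bound the Duhamel remainder $\vec w^{t_0}$ in the exterior cone using Strichartz, the pointwise bounds~\eqref{Z bounds}, Hardy's inequality, and the smallness from Step~1, and show this contribution is a small fraction of the lower bound from Step~2. Step~4: conclude that the exterior data must in fact lie in the finite-dimensional stationary subspace; sending $t_0\to\pm\infty$ along a subsequence and using pre-compactness of $\K$ once more, extract a weak limit which is a finite-energy solution of~\eqref{ode} vanishing as $r\to\infty$ and at $r=0$, and invoke Lemma~\ref{0 lem} to force that limit to be zero. Back-propagating this information then gives $\vec u \equiv 0$.

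The principal obstacle will be controlling the energy-supercritical quintic term $Z_2(ru)u^5$ in the exterior cone in Step~3: plain Strichartz is not obviously sharp enough at the $\dot H^2 \times \dot H^1$ scale, and one must leverage the uniform pointwise bound $|ru(t,r)| \lesssim \|\vec u(t)\|_{\HH}$ coming from equivariance and Hardy's inequality, combined with the smallness $\|\vec u(t)\|_{\HH(|x|>R)} \ll 1$ from Step~1, in order to beat the $u^5$ term in the exterior. A second subtlety is that $\HH$ is \emph{inhomogeneous}, so the exterior-energy estimates must be run simultaneously at the $\dot H^1 \times L^2$ and $\dot H^2 \times \dot H^1$ levels; this is precisely where the entire one-parameter family $\{\fy_\al\}_{\al\in\R}$ of Proposition~\ref{ode prop}, rather than a single stationary profile, must be quotiented out in Step~2.
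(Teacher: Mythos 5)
Your overall strategy --- exterior energy estimates for the free $5d$ radial wave, comparison with the free evolution of truncated exterior data, exploiting radiality and Strauss-type pointwise bounds to tame the supercritical quintic term outside a large ball, and ultimately playing the spatial asymptotics off against the classification of Lemma~\ref{0 lem} --- is indeed the paper's strategy, and you correctly isolate the main technical obstacle in your Step~3. One simplification you miss: because the nonlinearity is cut off to $\{r\gtrsim R\}$, the exterior problem becomes \emph{subcritical} relative to the energy, so the channel argument only needs to be run at the $\dot H^1\times L^2$ level; no parallel pass at the $\dot H^2\times\dot H^1$ level is required. Relatedly, the degenerate subspace $P(a)=\{(c_1r^{-3},c_2r^{-3})\}$ in Proposition~\ref{linear prop} is dictated by the Newton potential of the \emph{free} equation in $\R^5$, not by the family $\fy_\al$; the stationary solutions only enter at the very end.

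The genuine gap is your Step~4. First, the output of Steps~2--3 is not that the exterior data lies in a finite-dimensional subspace; it is the nonlinear inequality $\|\pi_R^\perp\vec u(t)\|_{\dot H^1\times L^2(r\ge R)}\lesssim R^{-1}\|\pi_R\vec u(t)\|^3_{\dot H^1\times L^2(r\ge R)}$, uniformly in $t$. Converting this into the asymptotics $r^3u_0(r)\to\ell_0$ and $r\int_r^\infty u_1(\rho)\rho\,d\rho\to\ell_1$ requires a dyadic bootstrap (difference estimates on $[r,2r]$, a priori growth bounds, summation), and showing $\ell_1=0$ requires returning to the equation itself with a time-averaging argument; none of this is a soft limiting procedure. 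Second, your claim that a subsequential limit of $\vec u(t_0)$ as $t_0\to\pm\infty$ is a solution of the ODE~\eqref{ode} is unjustified: pre-compactness of $K$ gives strong subsequential limits in $\HH$, but there is no reason such limit points are stationary. In the actual argument the dichotomy is on $\ell_0$ at a \emph{fixed} time: if $\ell_0\ne 0$ one linearizes around $\fy_{\ell_0}$ and reruns the entire exterior-cone machinery for the difference $u-\fy_{\ell_0}/r$ to show its data is compactly supported and then vanishes, concluding $\vec\psi(0)=(\fy_{\ell_0},0)\notin\E_0$, a contradiction via Lemma~\ref{0 lem}; if $\ell_0=0$ one must separately show $(u_0,u_1)$ is compactly supported and then run a unique-continuation argument from the boundary of the support. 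Your closing phrase ``back-propagating this information then gives $\vec u\equiv 0$'' elides both of these substantial steps.
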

We will make use of the following simple consequence of the above, namely that the compactness of $K$ implies that the $\dot{H}^1 \times L^2$ norm of $\vec u(t)$ vanishes on any exterior cone $\{ r \ge R + \abs{t}\}$ as $\abs{t} \to \infty$. 
\begin{cor}\label{vanish tail} Let $\vec u(t)$ and $K$ be as in Proposition~\ref{rigid}. Then for any $R \ge0$ we have 
\EQ{\label{compact ext en}
\| \vec u(t) \|_{\dot H^1 \times L^2( r \ge R+\abs{t})} \to 0 \mas t \to \pm \infty
}
where 
\ant{
\| \vec u(t) \|_{\dot H^1 \times L^2( r \ge R+\abs{t})}^2 := \int_{R + \abs{t}}^{\infty}( u_t^2(t, r) + u_r^2(t, r) )\, r^4 \, dr
}
\end{cor}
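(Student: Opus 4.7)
The plan is to reduce the corollary to a \emph{uniform tightness} statement for the trajectory: for every $\eps > 0$ there exists $R_0 = R_0(\eps)$ such that
\[
\sup_{t \in \R} \|\vec u(t)\|_{\HH(r \ge R_0)} < \eps.
\]
Once this is established, the conclusion is immediate, since the $\HH$ norm dominates $\|\cdot\|_{\dot{H}^1 \times L^2}$ by definition, and for $|t|$ large enough that $R + |t| \ge R_0$ one has the inclusion $\{r \ge R + |t|\} \subset \{r \ge R_0\}$, whence $\|\vec u(t)\|_{\dot{H}^1 \times L^2(r \ge R + |t|)} < \eps$. Sending $\eps \to 0$ then gives the desired limit.

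To produce $R_0$, I would invoke the standard fact that a pre-compact subset of a metric space is totally bounded. Fix $\eps > 0$ and choose a finite $\eps$-net $\vec v_1, \dots, \vec v_N \in \HH$ for $K$. For each \emph{fixed} $\vec v_i \in \HH$, dominated convergence applied separately to the four integrals defining the norm $\|\cdot\|_{(\dot{H}^2 \times \dot{H}^1) \cap (\dot{H}^1 \times L^2)(\R^5)}$ yields $\|\vec v_i\|_{\HH(r \ge R)} \to 0$ as $R \to \infty$; pick $R_i$ for which this quantity is less than $\eps$, and set $R_0 := \max_{1 \le i \le N} R_i$. For any $t \in \R$, select $i$ with $\|\vec u(t) - \vec v_i\|_{\HH} < \eps$ and apply the triangle inequality on the norm restricted to $\{r \ge R_0\}$ to conclude $\|\vec u(t)\|_{\HH(r \ge R_0)} < 2\eps$, which is the uniform tightness I was after.

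The only mildly delicate point is the passage from individual tightness of each profile $\vec v_i$ to uniform tightness of the full trajectory $K$, which is precisely what the finite-net argument accomplishes. No dynamical or dispersive input is needed here---the corollary is a soft consequence of compactness of $K$ in the ambient function space $\HH$, and finite speed of propagation plays no role (in contrast to the rigidity argument itself, where exterior-cone estimates for free waves will of course be used).
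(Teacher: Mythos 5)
Your proof is correct and is exactly the standard argument the paper has in mind: the paper states this corollary without proof as a ``simple consequence'' of the pre-compactness of $K$, and the intended reasoning is precisely the uniform-tightness-via-finite-$\eps$-net argument you give, combined with the observation that $\{r \ge R+|t|\}$ eventually lies inside $\{r \ge R_0\}$. No gaps.
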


The proof will proceed in several steps. The argument is reminiscent of the one presented in~\cite{KLS} and finds its inspiration in the so-called ``channels of energy" method pioneered in the seminal works \cite{DKM4, DKM5}, albeit  in a different context. A crucial ingredient in the proof are the exterior energy estimates for the underlying free radial wave equation in $5$ dimensions proved in~\cite{KLS}. 

\begin{prop}\label{linear prop}\cite[Proposition~$4.1$]{KLS}
Let $\Box w=0$ in $\R^{1+5}_{t,x}$ with radial data $(f,g)\in \dot H^1\times L^2(\R^5)$.
Then with some absolute constant $c>0$ one has for every $a>0$
\EQ{
\label{R5ext}
\max_{\pm}\;\limsup_{t\to\pm\I} \int_{r>a+|t|}^\I(w_t^2 + w_r^2)(t,r) r^4\, dr \ge c \| \pi_a^\perp (f,g)\|_{\dot H^1\times L^2(r>a)}^2
}
where $\pi_a=\Id-\pi_a^\perp$ is the orthogonal projection onto the plane $$P(a):=\{( c_1 r^{-3}, c_2 r^{-3})\:|\: c_1, c_2\in\R\}$$
in the space $\dot H^1\times L^2(r>a)$.  The left-hand side of~\eqref{R5ext} vanishes for all data in this plane. 
\end{prop}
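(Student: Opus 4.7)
\emph{Overall approach.} The proof proceeds via an explicit reduction of the 5d radial free wave equation to the 1D free wave equation on the half-line, followed by a direct analysis in d'Alembert coordinates. The key algebraic identity is that for any radial solution $w$ of $\Box w = 0$ in $\R^{1+5}$, the quantity
\[ u(t,r) := r^{-1}\p_r(r^3 w(t,r)) = 3 r w + r^2 w_r \]
satisfies $u_{tt} - u_{rr} = 0$ on $r > 0$, as one checks by direct computation using $w_{tt} = w_{rr} + \tfrac{4}{r} w_r$. Therefore $u$ admits the traveling wave representation $u(t,r) = F(r-t) + G(r+t)$, where the profiles $F, G$ are determined from the Cauchy data $(f,g)$ by $F + G = r^{-1}\p_r(r^3 f)$ and $-F' + G' = r^{-1}\p_r(r^3 g)$ on $r > 0$.

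\emph{Kernel identification.} The plane $P(a)$ is precisely the kernel of the map $(f,g) \mapsto u$: if $f = c_1 r^{-3}$ then $r^3 f \equiv c_1$, so $\p_r(r^3 f) = 0$, and analogously for $g$. For such data the corresponding solution is $w(t,r) = (c_1 + c_2 t) r^{-3}$, whose exterior energy satisfies
\[ \int_{a+\abs{t}}^\infty (w_t^2 + w_r^2)\, r^4\, dr \lesssim c_2^2 (a+\abs{t})^{-1} + (c_1+c_2 t)^2 (a+\abs{t})^{-3} \xrightarrow{\abs{t}\to\I} 0, \]
verifying the final sentence of the proposition.

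\emph{Lower bound.} For $(f,g) = \pi_a^\perp(f,g)$ the plan is to compare the exterior Cauchy norm with the 1D energy of $F,G$ on $(a,\I)$, then exploit the finite speed of propagation. Inverting the ODE $\p_r(r^3 w) = r u$ on $(a,\I)$ under the decay at infinity coming from $(f,g) \in \dot H^1 \times L^2$ (one must first subtract the two-parameter ambiguity living in $P(a)$), followed by a weighted Hardy inequality on $r > a$ with weight $r^4$, yields the two-sided equivalence
\[ \|\pi_a^\perp(f,g)\|_{\dot H^1 \times L^2(r>a)}^2 \simeq \int_a^\I \left[F'(r)^2 + G'(r)^2\right] dr. \]
For the exterior energy, as $t \to +\I$ the right-moving wave $G(r+t)$ is pushed entirely out of the region $\{r > a + t\}$ while the left-mover $F(r-t)$ populates it, giving asymptotically a contribution $\simeq \int_a^\I F'(r)^2\, dr$; symmetrically, as $t \to -\I$ one captures $\simeq \int_a^\I G'(r)^2\, dr$. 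Taking $\max_{\pm}$ retains at least half of the total 1D energy, producing the lower bound \eqref{R5ext}.

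\emph{Main obstacle.} The delicate point is the algebra of the inversion $u \mapsto w$ and the exact matching of its obstruction with the plane $P(a)$. The ODE $\p_r(r^3 w) = r u$ determines $r^3 w$ only up to an additive constant, and a parallel ambiguity appears in recovering the velocity component; these two constants of integration are precisely the two parameters of $P(a)$. The boundary terms at $r = a$ appearing in the integration-by-parts arguments that compare the 5D norm with $\int_a^\I(F'^2 + G'^2)\,dr$ must be absorbed exactly into the projection $\pi_a$ so that they vanish on $\pi_a^\perp$ data. Once this algebra is organized, the traveling wave structure furnishes the conclusion essentially for free.
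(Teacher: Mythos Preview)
The paper does not supply a proof of this proposition; it is imported directly from \cite[Proposition~4.1]{KLS} and used as a black box, so there is no argument in the paper to compare against.

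That said, your reduction $u = r^{-1}\p_r(r^3 w)$ to the free 1D wave equation is exactly the mechanism behind the proof in \cite{KLS}, and your identification of the kernel with $P(a)$ and the verification that the exterior energy vanishes on $P(a)$ are both correct. There is, however, a regularity slip in your norm comparison: since $u$ already sits one derivative above $w$, the equivalence should read $\|\pi_a^\perp(f,g)\|^2 \simeq \int_a^\infty [F^2 + G^2]\,dr$, not $\int_a^\infty [F'^2 + G'^2]\,dr$. Concretely, integrating by parts gives
\[
\int_a^\infty (F+G)^2\,dr = \int_a^\infty (3rf + r^2 f_r)^2\,dr = \int_a^\infty r^4 f_r^2\,dr - 3a^3 f(a)^2 = \|\pi_a^\perp(f,0)\|_{\dot H^1(r>a)}^2,
\]
while integrating your relation $-F'+G' = r^{-1}\p_r(r^3 g)$ from $r$ to $\infty$ yields $(F-G)(r) = \int_r^\infty \rho g\,d\rho - r^2 g(r)$, and a second integration by parts gives $\int_a^\infty (F-G)^2\,dr = \|\pi_a^\perp(0,g)\|_{L^2(r>a)}^2$. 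Your version $\int F'^2 + G'^2$ would instead land in $\dot H^2 \times \dot H^1$, which is the wrong space. With this correction the traveling-wave argument works exactly as you describe: the identity above (now with $w(t,\cdot)$ in place of $f$) gives $\int_{a+|t|}^\infty r^4 w_r^2\,dr \ge \int_{a+|t|}^\infty u(t,r)^2\,dr$, and the latter converges to $\int_a^\infty F^2$ or $\int_a^\infty G^2$ as $t \to \pm\infty$, whence $\max_\pm \limsup \ge \tfrac{1}{2}\int_a^\infty(F^2+G^2) = \tfrac{1}{4}\|\pi_a^\perp(f,g)\|^2$.
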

\begin{rem}One should note that the appearance of the projections $\pi^{\perp}_a$ on the right-hand-side of~\eqref{R5ext} is due to the fact that $r^{-3}$ is the Newton potential in $\R^5$. To be precise, consider initial data $(f, 0) \in \dot H^1 \times L^2 (r \ge R)$ which satisfies $(f, 0) = (r^{-3}, 0)$ on $r \ge R>0$, with $f(r)$ vanishing on $r \le R/2$. Then the corresponding free evolution $w(t, r)$ is given by $w(t, r) = r^{-3}$ on the region $r \ge R+\abs{t}$ by finite speed of propagation. Needless to say, the left-hand-side of~\eqref{R5ext} vanishes for this solution as $t \to \pm \infty$, and thus precludes an estimate without the projection. The other family of enemies is generated by taking data $(0, g) = (0, r^{-3})$ on the exterior region $r\ge R>0$ which has solution $w(t, r) = tr^{-3}$ on $r \ge R+ \abs{t}$. 
\end{rem}
\begin{rem}
The orthogonal projections $\pi_a$, $\pi_a^{\perp}$ are given by 
\ant{
&\pi_a(f, 0) = a^3r^{-3} f(a), \quad 
\pi_a(0, g) = a r^{-3} \int_a^{\infty} g( \rho)  \rho \, d \rho,\\
&\pi_a^{\perp}(f, 0) = f(r) - a^3r^{-3} f(a), \quad \pi_a^{\perp}(0, g) = g(r) - a r^{-3} \int_a^{\infty} g( \rho)  \rho \, d \rho,
}
and thus we have 
\ant{
\|\pi_a(f,g)\|_{\dot{H}^1 \times L^2(r>a)}^2 &= 3a^3 f^2(a) + a\left(\int_a^{\infty} rg(r) \, dr\right)^2 \\
\|\pi_a^{\perp}(f, g)\|_{\dot{H}^1 \times L^2(r>a)}^2&= \int^{\infty}_a f_r^2(r) \, r^4 \, dr - 3a^3f^2(a)\\
& \quad + \int_a^{\infty} g^2(r) \, r^4 \, dr - a\left(\int_a^{\infty} rg(r) \, dr\right)^2.
}
\end{rem}

The general idea is that the exterior energy decay~\eqref{compact ext en} can be combined with the exterior energy estimates for the underlying free equation in Proposition~\ref{linear prop}, to obtain precise {\em spacial} asymptotics for $u_0(r)= u(0, r)$  and $u_1(r) = u_t(0, r)$ as $r \to \infty$, namely, 
\EQ{\label{space asym}
&r^3 u_0(r)  \to \ell_0 \mas r \to \infty\\
&r \int_r^{\infty} u_1( \rho) \rho \, d \rho  \to 0 \mas r \to \infty
}
We then argue by contraction to show that $\vec u(t) =(0, 0)$ is the only solution with both a pre-compact trajectory and initial data with the above asymptotics.  We proceed with the proof of Proposition~\ref{rigid}
\subsection{Step 1} In this first step, we will use the exterior energy estimates for the free radial wave equation in Proposition~\ref{linear prop} along with Corollary~\ref{compact ext en} to derive the following inequality for our pre-compact trajectory $\vec u(t)$. We note that by compactness this result will hold uniformly in time.  
\begin{lem} \label{project ineq}There exists $R_0>0$ so that for all $R>R_0$ and for all $t \in \R$ we have 
\EQ{ \label{key ineq}
\| \pi^{\perp}_R \vec u(t) \|_{\dot{H}^1 \times L^2(r \ge R)} \lesssim R^{-1} \| \pi_R \vec u(t)\|_{\dot{H}^1 \times L^2(r \ge R)}^3 
}
where $P(R):= \{(c_1r^{-3}, c_2r^{-3}) \mid, c_1, c_2 \in \R\}$, $\pi_R$ denotes orthogonal projection onto $P(R)$ and $\pi_R^{\perp}$ denotes orthogonal projection onto the orthogonal complement of the plane $P(R)$ in the space $\dot{H}^1 \times L^2(r>R)(\R^5)$. We remark that the constant in~\eqref{key ineq} is uniform in $t \in \R$. 
\end{lem}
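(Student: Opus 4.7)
My plan is to apply the exterior energy estimate of Proposition~\ref{linear prop} to the free evolution of $\vec u$ from a fixed initial time $t_0\in\R$, and to control the nonlinear Duhamel correction along the exterior light cone using finite speed of propagation together with the precompactness of $K$.

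\medskip

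Fix $t_0\in\R$ and let $\vec w(\tau) := S(\tau-t_0)\vec u(t_0)$ be the free wave with data $\vec u(t_0)$. Setting $\vec v(\tau) := \vec u(\tau) - \vec w(\tau)$, Duhamel's formula gives
\[
\vec v(\tau) = -\int_{t_0}^{\tau} S(\tau-s)\bigl(0,\NN(u)(s)\bigr)\, ds,\qquad \NN(u):=Z_1(ru)u^3+Z_2(ru)u^5.
\]
Proposition~\ref{linear prop} applied to the free wave $\vec w$ yields
\[
c\,\|\pi_R^\perp\vec u(t_0)\|^2_{\dot H^1\times L^2(r>R)} \le \max_{\pm}\,\limsup_{\tau\to\pm\infty}\,\|\vec w(\tau)\|^2_{\dot H^1\times L^2(r>R+|\tau-t_0|)}.
\]
Writing $\vec w = \vec u - \vec v$, the triangle inequality splits the right-hand side into a contribution from $\vec u(\tau)$---which tends to $0$ as $|\tau-t_0|\to\infty$ by Corollary~\ref{vanish tail} and the precompactness of $K$---and one from $\vec v(\tau)$, which must be estimated on the exterior cone.

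\medskip

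The crux of the argument is the exterior Duhamel bound
\[
\limsup_{\tau\to\pm\infty}\,\|\vec v(\tau)\|_{\dot H^1\times L^2(r>R+|\tau-t_0|)} \lesssim R^{-1}\,\|\vec u(t_0)\|^3_{\dot H^1\times L^2(r>R)}.
\]
Finite speed of propagation for the inhomogeneous wave equation reduces this to estimating $\int_{t_0}^{\tau}\|\NN(u)(s)\|_{L^2(r>R+|s-t_0|)}\,ds$. For large $r$ the nonlinearity is dominated by its cubic piece, since $Z_1(0)=-\frac{4}{3}\neq 0$ while $Z_2(ru)u^5$ enjoys extra pointwise smallness. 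Combining the exterior radial Sobolev embedding in $\R^5$, $|u(s,r)|\lesssim r^{-3/2}\|u(s)\|_{\dot H^1(r>R+|s-t_0|)}$ valid for $r>R+|s-t_0|$, with the uniform $\HH$-bound provided by precompactness of $K$, yields the pointwise estimate $|\NN(u)(s,r)|\lesssim r^{-9/2}\|u(s)\|^3_{\dot H^1(r>R+|s-t_0|)}$. Integrating in $r$ gives $L^2$-smallness of order $(R+|s-t_0|)^{-2}$, and integrating in $s$ contributes the final factor of $R^{-1}$.

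\medskip

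Combining the two estimates gives $\|\pi_R^\perp\vec u(t_0)\|_{\dot H^1\times L^2(r>R)} \lesssim R^{-1}\|\vec u(t_0)\|^3_{\dot H^1\times L^2(r>R)}$. The Pythagorean decomposition $\|\vec u(t_0)\|^2_{\dot H^1\times L^2(r>R)} = \|\pi_R\vec u(t_0)\|^2 + \|\pi_R^\perp\vec u(t_0)\|^2$, combined with the uniform smallness of the exterior $\HH$-norm of $\vec u(t_0)$ for $R>R_0$ large (by precompactness of $K$), lets me absorb the $\pi_R^\perp$ contribution on the right into the left-hand side to conclude \eqref{key ineq}; uniformity in $t_0$ is automatic since all constants depend only on $\sup_t\|\vec u(t)\|_\HH$. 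I expect the main technical obstacle to be the exterior Duhamel estimate, where extracting the sharp $R^{-1}$ gain requires exploiting both the sharp $r^{-3/2}$ radial decay intrinsic to $\HH(\R^5)$ on the exterior region and the smallness of $\vec u$ there guaranteed by compactness of $K$.
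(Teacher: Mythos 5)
Your overall strategy is the same as the paper's: compare the nonlinear flow to a free flow on the exterior cone, apply the exterior energy estimate of Proposition~\ref{linear prop}, kill the left-hand side with Corollary~\ref{vanish tail}, and absorb the $\pi_R^{\perp}$ contribution at the end. The computation producing the $R^{-1}$ gain (exterior Strauss bound $|u|\lesssim r^{-3/2}\|u\|_{\dot H^1(\rho>r)}$, hence $\|\NN(u)(s)\|_{L^2(r>R+|s-t_0|)}\lesssim (R+|s-t_0|)^{-2}\|u(s)\|^3_{\dot H^1(\mathrm{ext})}$, integrable in $s$) is a legitimate, light-cone--adapted variant of what the paper does with a fixed spatial cutoff $\chi_R$ and the Strichartz $Z$-norm.

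However, there is a genuine gap at your ``crux'' step. Your Duhamel integrand is controlled by $\sup_s\|u(s)\|^3_{\dot H^1(r>R+|s-t_0|)}$, but the lemma requires the right-hand side $\|\vec u(t_0)\|^3_{\dot H^1\times L^2(r>R)}$, i.e.\ the \emph{data} norm at the single time $t_0$. Passing from one to the other is a nonlinear exterior energy-propagation statement: it is \emph{not} automatic from finite speed of propagation, and the uniform $\HH$-bound from compactness only yields $R^{-1}A^3$ with $A=\sup_t\|\vec u(t)\|_{\HH}$, which is a strictly weaker conclusion (and useless for the subsequent difference estimates, which need the cubic dependence on the local exterior norm). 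This propagation estimate is precisely why the paper introduces the modified Cauchy problem~\eqref{h eq} with truncated data and cut-off nonlinearity and proves a small-data theory for it in $\dot H^1\times L^2$ (Lemma~\ref{h small data}), whose a priori bound $\|\vec h\|_{Z(\R)}\lesssim\|\vec h(0)\|_{\dot H^1\times L^2}$ closes the Duhamel estimate with the correct cubic power. Your argument can be repaired without the modified equation by running a continuity/bootstrap on $M:=\sup_s\|\vec u(s)\|_{\dot H^1\times L^2(r>R+|s-t_0|)}$: the free part of the exterior energy is bounded by $\|\vec u(t_0)\|_{\dot H^1\times L^2(r>R)}$ (after truncating the data, by finite speed), so $M\le \|\vec u(t_0)\|_{\dot H^1\times L^2(r>R)}+CR^{-1}M^3$, and the uniform smallness of $M$ supplied by pre-compactness of $K$ for $R\ge R_0$ lets you absorb the cubic term. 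As written, that closing step is asserted rather than proved, and it is the essential nonlinear content of the lemma.
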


In order to prove Proposition~\ref{key ineq} we first need a preliminary result concerning a modified Cauchy problem that has a small data theory in the energy space and is designed to capture the dynamics of our critical element on the exterior cones $\CC_R:=\{(t, r) \mid r \ge R + \abs{t}\}$. In order to have a meaningful comparison in the energy space between a nonlinear wave and the underlying free evolution with the same initial data  we need to first put ourselves in a small data setting, where the Duhamel formula and Strichartz estimates give us the tools we need  to apply Proposition~\ref{linear prop}. The fact that we are only considering the evolution on the exterior cone $\CC_R$ allows us to truncate the initial data and the nonlinearity in a way that will render the initial value problem subcritical relative to the energy, while still preserving the flow on $\CC_R$. 

With this in mind we fix a smooth function $\chi \in C^{\infty}([0, \infty))$ where $\chi(r) = 1$ for $r \ge 1$ and $\chi(r) = 0$ on $r \le 1/2$. Then set $\chi_R(r):= \chi(r/R)$ and for each $R>0$ we consider the modified Cauchy problem: 
\EQ{\label{h eq}
&h_{tt} - h_{rr} - \frac{4}{r} h_r = \NN_R(r, h)\\
& \NN_R(r, h) = -\chi_RZ_1(rh)h^3 - \chi_R Z_2(rh)h^5\\
&\vec h(0) = (h_0, h_1)
}
where $Z_1$ and $Z_2$ are defined as in~\eqref{u eq}. The benefit of this modification is that forcing the nonlinearity to have support outside the ball of radius $R$ removed the super-critical nature of the problem and allows for a small-data theory in $\dot{H}^1 \times L^2$ via Strichatz estimates and the usual contraction mapping based argument. In order to formulate the small data theory for~\eqref{h eq} we define the norm $Z(I)$ where $0 \in I \subset \R$ is a time interval by 
\EQ{ \label{Z norm def}
\|\vec h\|_{Z(I)} = \|h\|_{L^{\frac{7}{3}}_t(I; L^{\frac{14}{3}}_x(\R^5))} + \|\vec h(t) \|_{L^{\infty}_t(I; \dot H^1 \times L^2)}
}

\begin{lem} \label{h small data}
There exists a $\de_0$ small enough, so that for all  $R>0$ and all initial data $\vec h(0)=(h_0, h_1) \in \dot{H}^1 \times L^2 (\R^5)$ with 
\ant{
\| \vec h(0) \|_{ \dot H^1 \times L^2(\R^5)} < \de_0
}
there exists a unique global solution $\vec h(t) \in \dot{H}^1 \times L^2$ to \eqref{h eq}. In addition $\vec h(t)$ satisfies 
\EQ{\label{h small}
\|\vec h\|_{Z(\R)} \lesssim \|\vec h(0)\|_{\dot{H}^1 \times L^2} \lesssim  \de_0
}
Moreover, if we denote the free evolution of the same data by $\vec h_L(t):= S(t) \vec h(0)$, then we have 
\EQ{\label{h-hL}
\sup_{t \in \R} \|  \vec h(t) - \vec h_L(t) \|_{ \dot{H}^1 \times L^2 } \lesssim R^{-1} \| \vec h(0) \|_{\dot H^1 \times L^2}^3 + R^{-4}\| \vec h(0) \|_{\dot H^1 \times L^2}^5 
}
\end{lem}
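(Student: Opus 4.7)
The proof is a standard Picard iteration in the space $Z(I)$ defined in \eqref{Z norm def}, leaning on the Strichartz estimates of Proposition \ref{strich}. Since the cubic $h^3$ and quintic $h^5$ powers appearing in $\mathcal{N}_R$ are both super-critical relative to $\dot H^1$ in five dimensions, the decisive point is that the support condition on $\chi_R$, combined with the radial Sobolev embedding
\[
|f(r)|\lesssim r^{-3/2}\,\|f\|_{\dot H^1(\R^5)}\qquad(f\text{ radial}),
\]
yields a uniform $L^\infty$ bound $|h(t,r)|\lesssim R^{-3/2}\|\vec h(t)\|_{\dot H^1\times L^2}$ on $\supp(\chi_R)\subset\{r\ge R/2\}$, which makes the nonlinearity effectively sub-critical and is the source of the $R$-gains appearing in \eqref{h-hL}.

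First I would invoke Proposition \ref{strich} with the admissible primal triple $(p,q,\gamma)=(7/3,14/3,1)$, dual to the energy triple $(r,s,\rho)=(\infty,2,0)$, to obtain for any solution of $\Box h=F$,
\[
\|\vec h\|_{L^\infty_t(I;\dot H^1\times L^2)}+\|h\|_{L^{7/3}_t(I;L^{14/3}_x)}\lesssim\|\vec h(0)\|_{\dot H^1\times L^2}+\|F\|_{L^1_t(I;L^2_x)},
\]
which is precisely the $Z(I)$-estimate one iterates. For the nonlinearity I would then use $|Z_1|,|Z_2|\lesssim 1$ together with the pointwise exterior bound above, splitting $|h|^6=|h|^{4/3}\cdot|h|^{14/3}$ and $|h|^{10}=|h|^{16/3}\cdot|h|^{14/3}$, placing the first factor in $L^\infty(r\ge R/2)$ (gaining the scalar factors $R^{-2}$ and $R^{-8}$ respectively) and the second inside the ambient $L^{14/3}_x$ norm of $h$. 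Taking $L^2_x$-norms, square-rooting, and integrating in time by H\"older yields
\[
\|\mathcal N_R(h)\|_{L^1_t(I;L^2_x)}\lesssim R^{-1}\|\vec h\|_{L^\infty_t\dot H^1\times L^2}^{2/3}\|h\|_{L^{7/3}_tL^{14/3}_x}^{7/3}+R^{-4}\|\vec h\|_{L^\infty_t\dot H^1\times L^2}^{8/3}\|h\|_{L^{7/3}_tL^{14/3}_x}^{7/3},
\]
and hence $\|\mathcal N_R(h)\|_{L^1_tL^2_x}\lesssim R^{-1}\|\vec h\|_{Z(I)}^3+R^{-4}\|\vec h\|_{Z(I)}^5$. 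A parallel Lipschitz bound for the difference $\mathcal N_R(h_1)-\mathcal N_R(h_2)$ comes from the same exponent allocation applied to a standard paraproduct-type decomposition.

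Combining these two ingredients, the Duhamel map $\Phi[\vec h](t)=S(t)\vec h(0)+\int_0^t S(t-s)(0,\mathcal N_R(h(s)))\,ds$ is a contraction on a ball in $Z(\R)$ of radius proportional to $\|\vec h(0)\|_{\dot H^1\times L^2}$, once $\delta_0$ is chosen small enough for the cubic and quintic $R^{-1},R^{-4}$ contributions to be absorbed; this produces uniqueness, global existence and \eqref{h small}. Finally, since $\vec h-\vec h_L$ is exactly the Duhamel term, a further application of the energy Strichartz gives
\[
\|\vec h-\vec h_L\|_{L^\infty_t(\dot H^1\times L^2)}\lesssim\|\mathcal N_R(h)\|_{L^1_tL^2_x}\lesssim R^{-1}\|\vec h\|_{Z(\R)}^3+R^{-4}\|\vec h\|_{Z(\R)}^5,
\]
and substituting $\|\vec h\|_{Z(\R)}\lesssim\|\vec h(0)\|_{\dot H^1\times L^2}$ from \eqref{h small} yields precisely \eqref{h-hL}. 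The main (though routine) obstacle is the exponent bookkeeping in the nonlinear estimate that pins the $R$-powers at $R^{-1}$ and $R^{-4}$; everything else is a textbook application of Proposition \ref{strich}.
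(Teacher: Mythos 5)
Your proposal is correct and follows essentially the same route as the paper: the same Strichartz pair $L^{7/3}_tL^{14/3}_x$ dual to $L^1_tL^2_x$, the same use of the radial Strauss estimate on the support of $\chi_R$, and the identical splittings $|h|^6=|h|^{4/3}|h|^{14/3}$ and $|h|^{10}=|h|^{16/3}|h|^{14/3}$ producing the $R^{-1}$ and $R^{-4}$ gains. The only cosmetic difference is that the Lipschitz bound for the contraction needs nothing as heavy as a paraproduct decomposition; the pointwise bounds \eqref{Z bounds} on $Z_1,Z_2$ and their derivatives suffice.
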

Besides Strichartz estimates and the Duhamel formula, the key ingredient to the proof of Lemma~\eqref{h small data} is the Strauss' lemma for radial functions, namely if $f \in \dot{H}^1(\R^5)$ then for each $r>0$ we have the estimate 
\EQ{\label{Strauss}
\abs{f (r)} \le Cr^{-3/2} \|f\|_{\dot{H}^1}
}
\begin{proof}[Sketch of Proof of Lemma~\ref{h small data}] The small data global well-posedness theory, including the estimate~\eqref{h small} follows from the usual contraction and continuity arguments based on the Strichartz estimates in Proposition~\eqref{strich}. In particular, we will make use of the estimates 
\EQ{
\| v\|_{L^{\frac{7}{3}}_t L^{\frac{14}{3}}_x} + \|\vec v\|_{L^{\infty}_t(\dot{H}^1 \times L^2)}  \lesssim \| \vec v(0)\|_{\dot{H}^1 \times L^2} + \|F\|_{L^1_tL^2_x}
}
for solutions $\vec v $ to the in-homogenous $5d$ wave equation, \eqref{lin wave}.  
Rather than review this standard argument, we just give details  for the estimate~\eqref{h-hL} which has the same flavor. By the Duhamel formula, Strichartz estimates,  and the uniform estimates~\eqref{Z bounds} for $Z_1, Z_2$ we have 
\ant{
\|\vec h(t)- \vec h_L(t)\|_{\dot{H}^1 \times L^2}  &\lesssim \|\NN_R( \cdot, h)\|_{L^1_t L^2_x}\\
& \lesssim \|\chi_RZ_1(r h) h^3\|_{L^1_tL^2_x} + \| \chi_R Z_2(r h) h^5\|_{L^1_t L^2_x}\\
& \lesssim \|\chi_R h^3\|_{L^1_tL^2_x} + \| \chi_R  h^5\|_{L^1_t L^2_x}\\
}
Next we estimate the right-hand-side above. We see that for each $t\in \R$ 
\ant{
\|\chi_R h^3\|_{L^2_x(\R^5)} &= \left(\int_0^{\infty} \chi_R^2(r) h^6(t, r)\, r^4 \, dr \right)^{\frac{1}{2}}\\
&\lesssim \left(\int_R^\I  h^6(t, r)\, r^4 \, dr \right)^{\frac{1}{2}}\\
&\lesssim (\sup_{r  \ge R} \abs{h(t, r)})^{\frac{2}{3}} \| h(t) \|_{L^{\frac{14}{3}}}^{\frac{7}{3}}\\
& \lesssim R^{-1} \|h(t)\|_{\dot{H}^1}^{\frac{2}{3}}  \| h(t) \|_{L^{\frac{14}{3}}}^{\frac{7}{3}}
}
where on the last line we used the Strauss estimate~\eqref{Strauss}. Thus, 
\ant{
\|\chi_R h^3\|_{L^1_tL^2_x} &\lesssim R^{-1} \|\vec h\|_{L^{\infty}_t(\dot{H}^1 \times L^2)}^{\frac{2}{3}}  \| h \|_{L^{\frac{7}{3}}_tL^{\frac{14}{3}}_x}^{\frac{7}{3}}\\
& \lesssim R^{-1}\| \vec h\|_{Z(\R)}^3 \lesssim R^{-1} \| \vec h(0) \|_{\dot{H}^1 \times L^2}^3
}
with the last inequality following from~\eqref{h small}. Similarly, 
\ant{
\|\chi_R h^5\|_{L^2_x(\R^5)} &= \left(\int_0^{\infty} \chi_R^2(r) h^{10}(t, r)\, r^4 \, dr \right)^{\frac{1}{2}}\\
&\lesssim (\sup_{r  \ge R} \abs{h(t, r)})^{\frac{8}{3}} \| h(t) \|_{L^{\frac{14}{3}}}^{\frac{7}{3}}\\
& \lesssim R^{-4} \|h(t)\|_{\dot{H}^1}^{\frac{8}{3}}  \| h(t) \|_{L^{\frac{14}{3}}}^{\frac{7}{3}}
}
and thus, 
\ant{
\|\chi_R h^5\|_{L^1_tL^2_x} &\lesssim R^{-4}\| \vec h\|_{Z(\R)}^5 \lesssim R^{-4} \| \vec h(0) \|_{\dot{H}^1 \times L^2}^5
}
which completes the proof of~\eqref{h-hL}. 
\end{proof}
\begin{rem}\label{u=uR}
We remark that for every $t \in \R$ the nonlinearity $\NN_R$ in~\eqref{h eq} satisfies
\ant{
\NN_R(r, u) = \NN(r, u):=- Z_1(ru)u^3 - Z_2(ru)u^5, \quad \forall r \ge R + \abs{t}.
}
Therefore, by finite speed of propagation we can  conclude that solutions to~\eqref{h eq} and~\eqref{u eq} agree on the exterior cone $\CC_R:=\{(t, r) \mid r \ge R + \abs{t}\}$.
\end{rem}
With the above remark in mind, we can now prove Lemma~\ref{project ineq}. 
\begin{proof}[Proof of Lemma~\ref{project ineq}] As always in this section, $\vec u(t)$ is our pre-compact trajectory. We will prove the Lemma first for time $t=0$. The proof for all times $t \in \R$ with   $R>R_0$ independent of $t$ will follow immediately from the pre-compactness of the set $K$. We begin by defining truncated initial data, $\vec u_R(0) = (u_{0, R}, u_{1, R})$ by
\EQ{
&u_{0, R}(r):= \begin{cases} u_0(r) \mfor r \ge R\\ u_0(R) \mfor 0 \le r \le R \end{cases}\\
&u_{1, R}(r):= \begin{cases} u_1(r) \mfor r \ge R\\ 0 \mfor 0 \le r \le R \end{cases}
}
This new, truncated initial data now has small $\dot H^1 \times L^2$ norm for large $R$. Indeed, 
\EQ{
\| \vec u_R(0)\|_{\dot{H}^1 \times L^2} = \|\vec u(0) \|_{\dot{H}^1 \times L^2(r \ge R)}
}
and so we can choose $R_0>0$ large enough so that for all $R \ge R_0$ we have 
\EQ{
\| \vec u_R(0)\|_{\dot{H}^1 \times L^2} \le \de \le \min(\de_0, 1)
}
where $\de_0$ is chosen as in Lemma~\eqref{h small data}. By Lemma~\eqref{h small data} we can associate to $\vec u_R(0)$ the solution $\vec u_R(t)$ to~\eqref{h eq}, which satisfies~\eqref{h small} and \eqref{h-hL}. We note that by Remark~\ref{u=uR} we have 
\EQ{\label{u=uR eq}
\vec u_R(t) = \vec u(t), \quad \forall (t, r) \in \CC_R.
}
Denote by $\vec u_{R, L}(t)$ the free evolution of the data $\vec u_{R}(0)$, i.e., $\vec u_{R, L}(t):= S(t) \vec u_{R}(0)$. By the triangle inequality and~\eqref{u=uR eq}, we have 
\ant{
\| \vec u(t) \|_{\dot{H}^1\times L^2(r \ge R+\abs{t})} &= \| \vec u_R(t) \|_{\dot{H}^1\times L^2(r \ge R+\abs{t})} \\
&\ge \|u_{R, L}(t) \|_{\dot{H}^1\times L^2(r \ge R+\abs{t})} - \|\vec u_R(t)- u_{R, L}(t) \|_{\dot{H}^1\times L^2(r \ge R+\abs{t})} 
}
Applying~\eqref{h-hL} to $\vec u_{R}(t)$ and taking $R>R_0$ large enough,  we can conclude 
\ant{
\|\vec u_R(t)- u_{R, L}(t) \|_{\dot{H}^1\times L^2(r \ge R+\abs{t})}  &\le  \|\vec u_R(t)- u_{R, L}(t) \|_{\dot{H}^1\times L^2} \\
&\lesssim R^{-1} \| \vec u_{R}(0)\|_{\dot{H}^1 \times L^2}^3 +R^{-4} \| \vec u_{R}(0)\|_{\dot{H}^1 \times L^2}^5\\
& = R^{-1} \| \vec u(0)\|_{\dot{H}^1 \times L^2(r \ge R)}^3 +R^{-4} \| \vec u(0)\|_{\dot{H}^1 \times L^2(r \ge R)}^5\\
& \lesssim R^{-1} \| \vec u(0)\|_{\dot{H}^1 \times L^2(r \ge R)}^3.
}
Combining the above two inequalities gives
\ant{
\| \vec u(t) \|_{\dot{H}^1\times L^2(r \ge R+\abs{t})} \ge  \|u_{R, L}(t) \|_{\dot{H}^1\times L^2(r \ge R+\abs{t})} - C_0R^{-1}\| \vec u(0)\|_{\dot{H}^1 \times L^2(r \ge R)}^3.
}
Letting $t \to \pm \infty$ -- the choice determined by Proposition~\ref{linear prop} -- we can  use Proposition~\ref{linear prop} to give a lower bound for the right-hand side and use Corollary~\ref{vanish tail} to see that the left-hand-side above goes to zero as $\abs{t} \to \infty$ and deduce the estimate
\ant{
 \| \pi^{\perp}_R  \vec u_{R}(0) \|_{ \dot{H}^1 \times L^2(r \ge R)} \lesssim R^{-1}\| \vec u(0)\|_{\dot{H}^1 \times L^2(r \ge R)}^3.
 }
 Since $\vec u_R(0)= \vec u(0)$ on $\{r \ge R\}$  we in fact have that 
 \ant{
 \| \pi^{\perp}_R  \vec u(0) \|_{ \dot{H}^1 \times L^2(r \ge R)} \lesssim R^{-1}\| \vec u(0)\|_{\dot{H}^1 \times L^2(r \ge R)}^3.
 }
 Finally, we can use the orthogonality of the projection $\pi_{R}$ to expand the right-hand side  
 \ant{
 \| \pi^{\perp}_R  \vec u(0) \|_{ \dot{H}^1 \times L^2(r \ge R)} \lesssim R^{-1}\left(\| \pi_R\vec u(0)\|_{\dot{H}^1 \times L^2(r \ge R)}^2  + \| \pi_R^{\perp}\vec u(0)\|_{\dot{H}^1 \times L^2(r \ge R)}^2 \right)^{\frac{3}{2}}.
 }
 To conclude, we simply choose $R_0$ large enough so that we can absorb the $\pi_R^{\perp}$ term on the right-hand-side into the left-hand-side and deduce that 
  \ant{
  \| \pi^{\perp}_R  \vec u(0) \|_{ \dot{H}^1 \times L^2(r \ge R)} \lesssim R^{-1}\| \pi_R\vec u(0)\|_{\dot{H}^1 \times L^2(r \ge R)}^3,
  } 
 which proves the lemma for $t=0$. To see that the inequality holds for all $t \in \R$ we note that by the pre-compactness of $K$ we can choose $R_0 = R_0( \de_0)$ so that  for all $R \ge R_0$ we have 
 \ant{
 \| \vec u(t) \|_{ \dot{H}^1 \times L^2( r \ge R)} \le \min(\de_0, 1)
 } 
 uniformly in $t \in \R$. Now we just repeat the entire argument above with truncated initial data at time $t=t_0$ and $R \ge R_0$ given by 
 \ant{
&u_{0, R, t_0}(r):= \begin{cases} u(t_0, r) \mfor r \ge R\\ u_0(t_0, R) \mfor 0 \le r \le R \end{cases}\\
&u_{1, R, t_0}(r):= \begin{cases} u_t(t_0, r) \mfor r \ge R\\ 0 \mfor 0 \le r \le R \end{cases}
}
This finishes the proof. 
\end{proof}

\subsection{Step 2}
Next, we will use the estimates in Lemma~\ref{project ineq} to deduce the asymptotic behavior of $\vec u(0, r)$ as $r \to \infty$ that was described in~\eqref{space asym}. We prove the following lemma. 

\begin{lem}\label{spacial decay}
Let $\vec u(t)$ be as in Proposition~\ref{rigid}. Then there exists $\ell_0 \in \R$ so that 
\begin{align}
r^3 u_0(r) \to \ell_0 \mas r \to \infty \label{u0 decay}\\
r \int_r^\I u_1( \rho) \rho \, d \rho \to 0 \mas r \to \infty \label{u1 decay}
\end{align}
Moreover the above convergence occurs at the rates 
\begin{align}
\label{u0 rate}\abs{r^3 u_0(r) - \ell_0}  = O(r^{-4}) \mas r \to \infty\\
\label{u1 rate}\abs{r \int_r^\I u_1( \rho) \rho \, d \rho } = O(r^{-2}) \mas r \to \infty
\end{align}
\end{lem}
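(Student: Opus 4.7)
The plan is to unpack the projection inequality in Lemma~\ref{project ineq} into a concrete estimate for two scalar functions built from $u_{0}$ and $u_{1}$, and then close a bootstrap on dyadic scales. Set
\[
v(r):=r^{3}u_{0}(r),\qquad w(r):=r\!\int_{r}^{\infty}\!u_{1}(\rho)\,\rho\,d\rho,
\]
so that $u_{0}=v/r^{3}$ and $u_{1}=w/r^{3}-w'/r^{2}$. Substituting into the explicit formulas for $\pi_{R}$ and $\pi_{R}^{\perp}$ displayed after Proposition~\ref{linear prop} and integrating by parts (the boundary terms $v(r)^{2}/r^{3}=r^{3}u_{0}(r)^{2}$ and $w(r)^{2}/r$ are each dominated by $\int_{r}^{\infty}(u_{0,\rho}^{2}+u_{1}^{2})\rho^{4}\,d\rho$, which vanishes as $r\to\infty$ since $\vec u(0)\in\dot H^{1}\times L^{2}$) one obtains
\[
\|\pi_{R}\vec u(0)\|_{\dot H^{1}\times L^{2}(r>R)}^{2}=3\,\frac{v(R)^{2}}{R^{3}}+\frac{w(R)^{2}}{R}=:F(R),
\]
\[
\|\pi_{R}^{\perp}\vec u(0)\|_{\dot H^{1}\times L^{2}(r>R)}^{2}=\int_{R}^{\infty}\!\frac{v'(r)^{2}}{r^{2}}\,dr+\int_{R}^{\infty}\!w'(r)^{2}\,dr.
\]
Feeding these into Lemma~\ref{project ineq} and then applying Cauchy--Schwarz on the dyadic annulus $[R,2R]$ produces the two master estimates
\[
|w(2R)-w(R)|^{2}\lesssim R^{-1}F(R)^{3},\qquad |v(2R)-v(R)|^{2}\lesssim R\,F(R)^{3}.
\]

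First I would observe that $F(R)\le\|\vec u(0)\|_{\dot H^{1}\times L^{2}(r>R)}^{2}\to 0$, so $F\le\eta$ on $[R_{0},\infty)$ for some $\eta>0$. The $w$-estimate $|w(2^{k+1}R_{0})-w(2^{k}R_{0})|\lesssim (2^{k}R_{0})^{-1/2}\eta^{3/2}$ is dyadically summable, so $w(R)$ has a limit $\ell_{1}\in\R$ with $|w(R)-\ell_{1}|\lesssim R^{-1/2}$. One identifies $\ell_{1}=0$ by the following observation: $w(R)\to\ell_{1}$ forces $\int_{r}^{\infty}u_{1}(\rho)\rho\,d\rho\sim\ell_{1}/r$, hence by differentiation $u_{1}(r)\sim\ell_{1}/r^{3}$ as $r\to\infty$, which contradicts $u_{1}\in L^{2}(\R^{5})$ (equivalently $\int u_{1}^{2}r^{4}\,dr<\infty$) unless $\ell_{1}=0$. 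In particular $w(R)^{2}/R\lesssim R^{-2}$.

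The $v$-estimate carries a divergent weight $R^{1/2}$ and is not dyadically summable a priori. However, simply telescoping still gives the crude growth bound $|v(R_{K})|^{2}\lesssim|v(R_{0})|^{2}+\eta^{3}R_{K}$, which is nevertheless enough to force $v(R)^{2}/R^{3}\lesssim R^{-2}$ for $R_K$ large. Combined with the $w$-bound this upgrades the master quantity to $F(R)\lesssim R^{-2}$, and returning to the two master estimates now gives $|v(2R)-v(R)|\lesssim R^{-5/2}$ and $|w(2R)-w(R)|\lesssim R^{-7/2}$, both now dyadically summable. Hence $v(R)$ has a limit $\ell_{0}\in\R$ with $|v(R)-\ell_{0}|\lesssim R^{-5/2}$ and $|w(R)|\lesssim R^{-7/2}$, so $F(R)\lesssim R^{-3}$. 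One last pass through the master estimates then yields $|v(R)-\ell_{0}|\lesssim R^{-4}$ and $|w(R)|\lesssim R^{-5}$, which via $v=r^{3}u_{0}$ and the definition of $w$ translate into exactly \eqref{u0 rate} and \eqref{u1 rate}.

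The main obstacle is precisely the first pass for $v$: the $R^{1/2}$ weight destroys dyadic summability, so a priori one cannot directly produce $\ell_{0}$. The way around it is to first settle $w$ (whose master estimate carries the favorable weight $R^{-1/2}$), to kill its limit via the $L^{2}$ constraint on $u_{1}$, and then to exploit the crude telescoping of the $v$-estimate not to bound $v$ itself but only the weighted quantity $v(R)^{2}/R^{3}$. Once the preliminary rate $F(R)\lesssim R^{-2}$ is in hand, the remaining iterations of the bootstrap are mechanical.
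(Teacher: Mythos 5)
Your setup and bootstrap are essentially the paper's own argument (the same quantities $v_0=r^3u_0$, $v_1=r\int_r^\infty u_1\rho\,d\rho$, the same dyadic difference estimates from Lemma~\ref{project ineq}, a crude growth bound followed by iterated upgrades of the rates). But there is a genuine gap at the one step that cannot be done statically: the identification $\ell_1=0$. Your argument -- ``$\int_r^\infty u_1(\rho)\rho\,d\rho\sim\ell_1/r$, hence by differentiation $u_1(r)\sim\ell_1 r^{-3}$, contradicting $u_1\in L^2(\R^5)$'' -- fails twice over. First, one cannot differentiate an asymptotic relation: knowing the integral is $\ell_1/r+o(1/r)$ gives no pointwise asymptotic for $u_1$. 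Second, and more fundamentally, even the exact profile $u_1(r)=\ell_1 r^{-3}$ for large $r$ does \emph{not} contradict square integrability: $\int_R^\infty r^{-6}\,r^4\,dr<\infty$. Indeed $(0,c\,r^{-3})$ on $\{r>R\}$ is precisely an element of the plane $P(R)$ in Proposition~\ref{linear prop}, i.e.\ one of the designed ``enemies'' that lies in $\dot H^1\times L^2(r>R)$ and is invisible to the exterior energy estimate. So no argument using only the time slice $\vec u(0)$ can force $\ell_1=0$; the data $(0,\chi(r)r^{-3})$ is a perfectly admissible element of $\HH$ with $\ell_1\neq0$.

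The paper kills $\ell_1$ dynamically, and this is the only place in the lemma where the equation and the pre-compactness of the full trajectory are used. One first shows that the limit $\ell_1(t)$ exists for every $t$ (all the estimates of Lemma~\ref{project ineq} hold uniformly in $t$) and is independent of $t$: the difference $\ell_1(t_2)-\ell_1(t_1)$ is written via $\int u_{tt}$, the equation replaces $u_{tt}$ by $u_{rr}+\tfrac4r u_r+\NN$, and integration by parts together with the uniform growth bound $|v_0(t,r)|\lesssim r^{1/6}$ shows the difference is $O(R^{-11/6})$, hence zero. Then, if $\ell_1\neq0$, the quantity $R\int_R^\infty u_t(t,\rho)\rho\,d\rho$ has a fixed sign and size $\gtrsim|\ell_1|$ for all $t$; integrating over $t\in[0,T]$ and swapping the order of integration gives $T|\ell_1|\lesssim\bigl|R\int_R^\infty(u(T,\rho)-u(0,\rho))\rho\,d\rho\bigr|\lesssim R^{1/6}$, which is absurd for $T$ large. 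You need to supply an argument of this kind; the remainder of your bootstrap (including the telescoping trick that controls $v_0(R)^2/R^3$ before $v_0$ itself converges) then goes through.
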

For the proof of Lemma~\ref{spacial decay} it will be convenient to make the following substitutions. Define 
\EQ{\label{v def}
&v_0(t, r):= r^3 u(t, r), \\
&v_1(t, r):= r \int_r^\I u_1( \rho) \rho \, d \rho.
}
To simplify notation further we will often write $v_0(r) :=v_0(0, r)$ and $v_1(r):=v_1(0, r)$. With these definitions we can compute 
\EQ{\label{v u project} 
&\| \pi_R \vec u(t)\|^2_{\dot{H}^1 \times L^2(r \ge R)} = 3R^{-3} v_0^2(t, R) + R^{-1}v_1^2(t, R)\\
&\| \pi_R^{\perp} \vec u(t)\|^2_{\dot{H}^1 \times L^2(r \ge R)} = \int_R^\I \left( \frac{1}{r} \p_r v_0(t, r) \right)^2 \, dr + \int_R^\I \left( \p_r v_1(t ,r) \right)^2 \, dr
}
To begin, we rewrite the conclusions of Lemma~\ref{space asym} in terms of $(v_0, v_1)$. 
\begin{lem}\label{v project}
Let $(v_0, v_1)$ be defined as in~\eqref{v def}. Then there exists $R_0>0$ such that for all $R \ge R_0$ we have 
\ant{
\left( \int_R^\I \left( \frac{1}{r} \p_r v_0(t, r) \right)^2 + \left( \p_r v_1(t ,r) \right)^2 \, dr \right)^{\frac{1}{2}} \lesssim R^{-1} \left(3R^{-3} v_0^2(t, R) + R^{-1}v_1^2(t, R)\right)^{\frac{3}{2}}
}
with a constant that is uniform in $t \in \R$. 
\end{lem}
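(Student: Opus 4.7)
The lemma is Lemma~\ref{project ineq} transcribed into the new variables $(v_0, v_1)$ via the two identities displayed in~\eqref{v u project}. Once those identities are verified, taking a square root of the estimate in Lemma~\ref{project ineq} yields the claim, with uniformity in $t \in \R$ inherited directly.

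At each fixed $t$, write $u_0(r) = u(t, r)$ and $u_1(r) = u_t(t, r)$ for the components of $\vec u(t)$. The formulas for $\|\pi_R \vec u(t)\|^2$ follow at once from the explicit projection formulas in the Remark following Proposition~\ref{linear prop}, combined with the substitutions $v_0(t, R) = R^3 u_0(R)$ and $v_1(t, R)/R = \int_R^\I u_1(\rho) \rho\, d\rho$, which convert $3 R^3 u_0^2(R)$ into $3 R^{-3} v_0^2(t, R)$ and $R \bigl( \int_R^\I u_1 \rho\, d\rho \bigr)^2$ into $R^{-1} v_1^2(t, R)$.

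The identities for $\|\pi_R^\perp \vec u(t)\|^2$ contain the only real content. I would substitute $u_0 = v_0/r^3$ and $u_1 = v_1/r^3 - v_{1,r}/r^2$ (the latter from differentiating the definition of $v_1$), expand the resulting squares in the integrals $\int_R^\I u_{0,r}^2 r^4 \, dr$ and $\int_R^\I u_1^2 r^4 \, dr$, and integrate by parts on the cross terms. In each case the cross term produces a boundary contribution at $r = R$ plus an integral that exactly cancels the remaining quadratic term ($v_0^2/r^4$ or $v_1^2/r^2$), leaving
\[
\int_R^\I u_{0,r}^2 r^4 \, dr = \int_R^\I \Bigl( \frac{v_{0,r}}{r} \Bigr)^2 dr + 3 R^{-3} v_0^2(t, R), \qquad \int_R^\I u_1^2 r^4 \, dr = R^{-1} v_1^2(t, R) + \int_R^\I v_{1,r}^2 \, dr.
\]
Subtracting the projection boundary terms $3 R^3 u_0^2(R) = 3 R^{-3} v_0^2(t, R)$ and $R \bigl( \int_R^\I u_1 \rho\, d\rho \bigr)^2 = R^{-1} v_1^2(t, R)$ from these gives the $\pi_R^\perp$ identities in the second line of~\eqref{v u project}.

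Combining these identities with Lemma~\ref{project ineq} and taking a square root produces the stated inequality, uniformly in $t$. The only (mild) obstacle is justifying the vanishing of boundary contributions at $r = \I$ in the integrations by parts, which follows from the $\HH$-regularity of $\vec u(t)$ and a density argument; beyond that the proof is purely algebraic bookkeeping.
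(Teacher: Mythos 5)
Your proposal is correct and follows exactly the route the paper takes: the paper records the identities \eqref{v u project} by the same substitution/integration-by-parts computation and then reads Lemma~\ref{v project} off as a direct transcription of Lemma~\ref{project ineq}. Your algebra for both the $\pi_R$ and $\pi_R^\perp$ identities checks out, including the cancellation of the cross terms against the quadratic remainders and the boundary contributions at $r=R$.
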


We will use Lemma~\ref{v project} to prove difference estimates. We let $\de_1>0$ be a small number to be determined below so that $\de_1 \le \de_0^2$ where $\de_0$ is as in the small data theory in Lemma~\ref{h small data}. We also define $R_1 = R_1(\de _1)$ large enough so that for all $R \ge R_1$ we have 
\EQ{\label{R1 de1}
&\| \vec u(t) \|_{\dot{H}^1 \times L^2(r \ge R)}^2 \le  \de_1 \le \de_0^2\\
&R_1^{-1} \le  \de_1
}
Such an $R_1$ exists by the pre-compactness of $K$. 
\begin{cor}\label{diff esti}
Let $R_1$ be as above. Then for all $R_1 \le r \le r' \le 2r$ and for all $t \in \R$  we have 
\EQ{\label{v0 diff1}
\abs{ v_0(t, r) - v_0(t, r') } \lesssim r^{-4} \abs{ v_0(t, r)}^3 + r^{-1} \abs{ v_1(t, r)}^3
} 
and
\EQ{\label{v1 diff1}
\abs{ v_1(t, r) - v_1(t, r') } \lesssim r^{-5} \abs{ v_0(t, r)}^3 + r^{-2} \abs{ v_1(t, r)}^3
} 
with the above estimates holding uniformly in $t \in \R$. 
\end{cor}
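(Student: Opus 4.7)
The plan is to derive both difference estimates directly from Lemma~\ref{v project} via the fundamental theorem of calculus combined with Cauchy--Schwarz on the dyadic interval $[r,r']$.

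First I would handle $v_0$. Writing
\[
v_0(t,r') - v_0(t,r) = \int_r^{r'} s \cdot \frac{1}{s}\,\partial_s v_0(t,s)\,ds,
\]
Cauchy--Schwarz yields
\[
|v_0(t,r) - v_0(t,r')| \le \Bigl(\int_r^{r'} s^2\,ds\Bigr)^{1/2} \Bigl(\int_r^{r'} \tfrac{1}{s^2}(\partial_s v_0(t,s))^2\,ds\Bigr)^{1/2}.
\]
Since $r \le r' \le 2r$, the first factor is $\lesssim r^{3/2}$, and extending the second integral to $(r,\infty)$ and applying Lemma~\ref{v project} with $R=r \ge R_1$ bounds it by $r^{-1}\bigl(3 r^{-3} v_0^2(t,r) + r^{-1}v_1^2(t,r)\bigr)^{3/2}$. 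Combining and using the elementary inequality $(a+b)^{3/2} \lesssim a^{3/2}+b^{3/2}$ for $a,b\ge 0$, I obtain
\[
|v_0(t,r)-v_0(t,r')| \lesssim r^{1/2}\bigl(r^{-9/2}|v_0(t,r)|^3 + r^{-3/2}|v_1(t,r)|^3\bigr) = r^{-4}|v_0(t,r)|^3 + r^{-1}|v_1(t,r)|^3,
\]
which is exactly \eqref{v0 diff1}.

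For $v_1$ the argument is the same but slightly simpler: Cauchy--Schwarz on
\[
v_1(t,r') - v_1(t,r) = \int_r^{r'} \partial_s v_1(t,s)\,ds
\]
gives
\[
|v_1(t,r)-v_1(t,r')| \le (r'-r)^{1/2}\Bigl(\int_r^{\infty}(\partial_s v_1(t,s))^2\,ds\Bigr)^{1/2} \lesssim r^{1/2}\cdot r^{-1}\bigl(r^{-3}v_0^2(t,r) + r^{-1}v_1^2(t,r)\bigr)^{3/2},
\]
and splitting the power $3/2$ as above produces $r^{-5}|v_0(t,r)|^3 + r^{-2}|v_1(t,r)|^3$, yielding \eqref{v1 diff1}. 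The uniformity in $t \in \mathbb{R}$ is inherited from the uniformity in Lemma~\ref{v project}, which itself comes from the pre-compactness of $K$ and the choice of $R_1$ in \eqref{R1 de1}.

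There is no real obstacle here; the only point requiring a little care is the weighting of the two derivative terms in Lemma~\ref{v project}, which forces the choice of factor $s$ vs.\ $1$ when applying Cauchy--Schwarz (this is why the two estimates differ in the exponent of $r$ by exactly one). Everything else is routine.
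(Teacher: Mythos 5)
Your argument is correct and is essentially identical to the paper's: both apply the fundamental theorem of calculus, Cauchy--Schwarz with the weight $\rho$ (resp.\ $1$) chosen to match the two derivative terms in Lemma~\ref{v project}, use $\int_r^{r'}\rho^2\,d\rho \lesssim r^3$ on the dyadic interval, and then invoke Lemma~\ref{v project} with $R=r$. No differences worth noting.
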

We will also use a rewording of Corollary~\ref{diff esti} which is an immediate consequence of~\eqref{v u project} and the definitions of $\de_1$ and $R_1 = R_1( \de_1)$ in~\eqref{R1 de1}.
\begin{cor} \label{diff esti 2}
Let $R_1, \de_1$ be as in~\eqref{R1 de1}. Then for all $r, r'$ with $R_1 \le r \le r' \le 2r$ and for all $t \in \R$ we have 
\EQ{\label{v0 diff2}
\abs{ v_0(t, r) - v_0(t, r') } \lesssim r^{-1} \de_1 \abs{ v_0(t, r)}+ \de_1 \abs{ v_1(t, r)}
} 
and
\EQ{\label{v1 diff2}
\abs{ v_1(t, r) - v_1(t, r') } \lesssim r^{-2} \de_1 \abs{ v_0(t, r)} + r^{-1} \de_1 \abs{ v_1(t, r)}
} 
with the above estimates holding uniformly in $t \in \R$. 
\end{cor}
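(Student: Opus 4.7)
The plan is to deduce Corollary~\ref{diff esti 2} directly from Corollary~\ref{diff esti} by using the hypothesis~\eqref{R1 de1} to convert the cubic terms $|v_0|^3$ and $|v_1|^3$ on the right-hand sides of~\eqref{v0 diff1} and~\eqref{v1 diff1} into $\de_1$ times a linear expression, at the cost of an appropriate power of $r$.

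The pointwise ingredient is read off~\eqref{v u project}. Since $\|\pi_R \vec u(t)\|^2_{\dot H^1 \times L^2(r \ge R)} \le \|\vec u(t)\|^2_{\dot H^1 \times L^2(r \ge R)} \le \de_1$ for every $R \ge R_1$ by~\eqref{R1 de1}, the identity
\begin{equation*}
\|\pi_R \vec u(t)\|^2_{\dot H^1 \times L^2(r \ge R)} = 3R^{-3} v_0^2(t,R) + R^{-1} v_1^2(t,R)
\end{equation*}
yields the a priori pointwise bounds $|v_0(t,R)|^2 \lesssim R^3 \de_1$ and $|v_1(t,R)|^2 \lesssim R\,\de_1$, uniformly in $t \in \R$ and in $R \ge R_1$. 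The uniformity in $t$ is exactly what the pre-compactness of the trajectory $K$ provides through the choice of $R_1 = R_1(\de_1)$ in~\eqref{R1 de1}.

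It then suffices to plug these bounds into the cubic right-hand sides. Writing $|v_0(t,r)|^3 = |v_0(t,r)|^2 \cdot |v_0(t,r)| \lesssim r^3 \de_1 |v_0(t,r)|$ turns the term $r^{-4}|v_0|^3$ in~\eqref{v0 diff1} into $r^{-1} \de_1 |v_0|$, and similarly $|v_1|^3 \lesssim r\,\de_1 |v_1|$ turns $r^{-1}|v_1|^3$ into $\de_1 |v_1|$; this is exactly~\eqref{v0 diff2}. The identical manipulation applied to~\eqref{v1 diff1} produces~\eqref{v1 diff2}, with both powers of $r$ shifted down by one as required.

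There is no genuine obstacle here — as signaled in the excerpt, this is an immediate rewording. Its significance is that the resulting bounds are now linear in $(v_0,v_1)$ with small coefficient $\de_1$, precisely the form needed in order to iterate on dyadic annuli $r \simeq 2^k R_1$ and extract the decay rates asserted in Lemma~\ref{spacial decay}.
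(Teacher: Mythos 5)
Your proposal is correct and is exactly the argument the paper intends: the corollary is obtained by using \eqref{v u project} together with \eqref{R1 de1} to bound $|v_0(t,r)|^2\lesssim r^3\de_1$ and $|v_1(t,r)|^2\lesssim r\,\de_1$, and then substituting these into the cubic right-hand sides of \eqref{v0 diff1} and \eqref{v1 diff1}.
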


\begin{proof}[Proof of Corollary~\ref{diff esti}]
This follows rather directly from Lemma~\ref{v project}. For $r \ge R_1$ and $r \le r' \le 2r$ we see that 
\ant{
\abs{ v_0(t, r) - v_0(t, r') } &\le \left( \int_r^{r'} \abs{\p_r v_0(t,  \rho) } \, d \rho\right)\\
& \le  \left( \int_r^{r'} \abs{\frac{1}{\rho}\p_r v_0(t,  \rho) }^2 \, d \rho\right)^{\frac{1}{2}} \left( \int_r^{r'} \rho^2 \, d \rho\right)^{\frac{1}{2}}\\
&\lesssim r^{\frac{3}{2}} \left[ r^{-1} \left(3r^{-3} v_0^2(t, r) + r^{-1}v_1^2(t, r)\right)^{\frac{3}{2}}\right]\\
&\lesssim r^{-4} \abs{ v_0(t, r)}^3 + r^{-1} \abs{ v_1(t, r)}^3
}
where in the second to last inequality above we used Lemma~\ref{v project}. Similarly, 
\ant{
\abs{ v_1(t, r) - v_1(t, r') } &\le \left( \int_r^{r'} \abs{\p_r v_1(t,  \rho) } \, d \rho\right)\\
& \le  \left( \int_r^{r'} \abs{\p_r v_0(t,  \rho) }^2 \, d \rho\right)^{\frac{1}{2}} \left( \int_r^{r'}  \, d \rho\right)^{\frac{1}{2}}\\
&\lesssim r^{\frac{1}{2}} \left[ r^{-1} \left(3r^{-3} v_0^2(t, r) + r^{-1}v_1^2(t, r)\right)^{\frac{3}{2}}\right]\\
&\lesssim r^{-5} \abs{ v_0(t, r)}^3 + r^{-2} \abs{ v_1(t, r)}^3
}
as desired.
\end{proof}
Next, we can use the difference estimates to provide an upper bound on the growth rates of $v_0(t, r)$ and $v_1(t, r)$ as $r \to \infty$. 
\begin{claim}
Let $v_0(t, r)$ and $v_1(t, r)$ be as in~\eqref{v def}. Then 
\begin{align}
\label{v0 grow} &\abs{v_0(t, r)} \lesssim r^{\frac{1}{6}} \\
\label{v1 grow} &\abs{v_1(t, r)} \lesssim r^{\frac{1}{6}}
\end{align}
where again the constants above are uniform in $t \in \R$. 
\end{claim}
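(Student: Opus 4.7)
The plan is to iterate the difference estimates in Corollary~\ref{diff esti 2} dyadically in $r$, starting from the fixed base scale $R_1$. Since those estimates carry an overall factor of $\delta_1$, which by \eqref{R1 de1} is at our disposal to make small, we will be able to force the resulting geometric-in-$n$ growth to correspond to sub-$r^{1/6}$ power growth in $r$, uniformly in $t \in \R$.

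First I would record a uniform base estimate at $r = R_1$: there is a finite $M_0 = M_0(R_1)$ with
\[
|v_0(t, R_1)| + |v_1(t, R_1)| \le M_0 \quad \textrm{for all } t \in \R.
\]
This follows from the pre-compactness of $K$ in $\HH$ together with two elementary radial bounds in $\R^5$. Strauss' lemma gives $|u(t,R_1)| \lesssim R_1^{-3/2}\|u(t)\|_{\dot H^1}$, so $v_0(t,R_1) = R_1^3 u(t,R_1)$ is controlled; and Cauchy--Schwarz applied to the defining integral yields $|v_1(t, R_1)| \lesssim R_1^{1/2}\|u_1(t)\|_{L^2}$. Both right-hand sides are uniform in $t$ on the compact set $K$.

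Next, set $r_n := 2^n R_1$, $a_n(t) := |v_0(t,r_n)|$, $b_n(t) := |v_1(t,r_n)|$ and apply Corollary~\ref{diff esti 2} with $(r, r') = (r_n, r_{n+1})$. Summing the two resulting inequalities and using $r_n \ge R_1 \ge 1$ to absorb the $r_n^{-1}$- and $r_n^{-2}$-weighted contributions into the single term $\delta_1 b_n$ from the first estimate, one obtains, for some absolute $C_1 \ge 1$,
\[
a_{n+1} + b_{n+1} \le (1 + C_1 \delta_1)(a_n + b_n).
\]
Iterating yields $a_n + b_n \le M_0 (1 + C_1 \delta_1)^n = M_0 (r_n/R_1)^{\log_2(1+C_1\delta_1)}$. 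I would now shrink $\delta_1$ once more, if necessary, so that $1 + C_1 \delta_1 \le 2^{1/6}$; this is compatible with the conditions in \eqref{R1 de1}, and drops the exponent to at most $1/6$, giving $a_n + b_n \lesssim r_n^{1/6}$ uniformly in $t$. For arbitrary $r \in [r_n, r_{n+1}]$ a last application of Corollary~\ref{diff esti 2} with $r_n$ in place of the inner radius and $r$ in place of the outer one yields $|v_j(t,r)| \le |v_j(t, r_n)| + C \delta_1 (a_n + b_n) \lesssim r^{1/6}$, which is the claim.

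The only delicate point is that the first inequality of Corollary~\ref{diff esti 2} contains the term $\delta_1 |v_1(t,r)|$ with \emph{no} decaying weight in $r$: the quantity $v_0$ really can pick up a proportional share of $v_1$ at every dyadic scale, so one cannot hope to prove that $a_n + b_n$ stays bounded. The best one can do is to identify that the scale-to-scale amplification factor is $1 + O(\delta_1)$, and the whole strategy succeeds only because $\delta_1$ was introduced as a free small parameter in \eqref{R1 de1} and the target exponent $1/6$ is generous enough that any sufficiently small choice of $\delta_1$ suffices.
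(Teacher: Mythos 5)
Your proof is correct and follows essentially the same route as the paper: dyadic iteration of the difference estimates of Corollary~\ref{diff esti 2}, summing the two inequalities to get the amplification factor $1+O(\de_1)$ per scale, and then shrinking $\de_1$ so that this factor is at most $2^{1/6}$. Your explicit justification of the uniform-in-$t$ base bound at $r=R_1$ via Strauss and Cauchy--Schwarz on the compact set $K$ is a detail the paper leaves implicit, and is a welcome addition.
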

\begin{proof}
We begin by noting that it suffices to prove the claim for $t=0$ since the argument relies solely on estimates in this section that hold uniformly in $t \in\R$. 

Fix $r_0 \ge R_1$ and observe that by setting $r= 2^nr_0$, $r'=2^{n+1}r_0$ in the difference estimates~\eqref{v0 diff2},~\eqref{v1 diff2} we have for each $n \in \N$, 
\begin{align}
&\label{v0 tri}\abs{v_0(2^{n+1} r_0)} \le (1+ C_1(2^n r_0)^{-1} \de_1) \abs{v_0(2^nr_0)} + C_1 \de_1 \abs{v_1(2^nr_0)}\\
&\label{v1 tri}\abs{v_1(2^{n+1} r_0)} \le (1+ C_1(2^n r_0)^{-1} \de_1) \abs{v_1(2^nr_0)} + C_1 \de_1(2^n r_0)^{-2} \abs{v_0(2^nr_0)}
\end{align}
For clarity of the exposition, we introduce the notation 
\begin{align}
a_n:= \abs{v_0(2^nr_0)}\\
b_n:=\abs{v_1(2^nr_0)}
\end{align}
Adding~\eqref{v0 tri} with \eqref{v1 tri} yields
\ant{
a_{n+1} + b_{n+1} &\le (1+ C_1\de_1((2^n r_0)^{-1} +(2^n r_0)^{-2}))a_n + (1+C_1 \de_1(1+(2^n r_0)^{-1}))b_n\\
&\le (1+ 2C_1 \de_1)(a_n + b_n)
}
Arguing inductively we see that for each $n$, 
\ant{
(a_n+b_n) \le (1+2C_1 \de_1)^{n}(a_0+b_0)
}
we now choose $\de_1$ small enough so that $(1+2C_1 \de_1) \le 2^{\frac{1}{6}}$. This allows us to conclude that 
\EQ{\label{an bn}
a_n \le C(2^n r_0)^{\frac{1}{6}}\\
b_n \le C(2^n r_0)^{\frac{1}{6}}
}
where the constant $C=C(r_0)$ but this does not matter for our purposes since $r_0$ is fixed. Note that~\eqref{an bn} proves~\eqref{v0 grow} and~\eqref{v1 grow} for $r=2^nr_0$. The general estimates ~\eqref{v0 grow} and~\eqref{v1 grow} now follow easily by combining~\eqref{an bn} with the difference estimates~\eqref{v0 diff1},~\eqref{v1 diff1}. 
\end{proof}

With the bounds on the growth of $(v_0, v_1)$ proved in the previous claim, we can now extract a limit. We begin with $v_1(t, r)$ as we will need to first show that this tends to $0$ in  order to get the correct rate for $v_0(t, r)$. We will show this in several steps. We begin with the following. 
\begin{claim} \label{ell1 claim}For each $t \in \R$ there exists $\ell_1(t) \in \R$ so that 
\EQ{
\abs{ v_1(t, r) -  \ell_1(t)}  = O(r^{-2}) \mas r \to \infty
}
where the $O( \cdot)$ above is uniform in $t \in \R$. 
\end{claim}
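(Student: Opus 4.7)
The plan is to establish existence of the limit $\ell_1(t)$ by a Cauchy argument across dyadic scales using the difference estimate \eqref{v1 diff1} from Corollary~\ref{diff esti}, and then bootstrap to get the sharp rate $O(r^{-2})$. Throughout, I rely on the fact that the constants in the difference estimates \eqref{v0 diff1}--\eqref{v1 diff1} and in the growth bounds \eqref{v0 grow}--\eqref{v1 grow} are uniform in $t \in \R$, which comes from the pre-compactness of $K$ in $\HH$.

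First I would fix $r_0 \ge R_1$ and estimate $v_1(t, 2^{n+1} r_0) - v_1(t, 2^n r_0)$. Plugging the growth bounds $\abs{v_0(t,r)}, \abs{v_1(t,r)} \lesssim r^{1/6}$ into \eqref{v1 diff1} with $r = 2^n r_0$ and $r' = 2^{n+1}r_0$ gives
\ant{
\abs{v_1(t, 2^{n+1}r_0) - v_1(t, 2^n r_0)} \lesssim (2^n r_0)^{-5}(2^n r_0)^{1/2} + (2^n r_0)^{-2}(2^n r_0)^{1/2} \lesssim (2^n r_0)^{-3/2},
}
which is summable in $n$. Therefore $\{v_1(t, 2^n r_0)\}_{n \in \N}$ is Cauchy, and we define $\ell_1(t)$ to be its limit. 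Telescoping and then using \eqref{v1 diff1} once more to interpolate between dyadic scales (for $r \in [2^n r_0, 2^{n+1}r_0]$) yields the preliminary rate $\abs{v_1(t,r) - \ell_1(t)} = O(r^{-3/2})$ uniformly in $t \in \R$.

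The preliminary rate already implies the key qualitative gain: for all $r \ge R_1$ we have $\abs{v_1(t,r)} \le \abs{\ell_1(t)} + C r^{-3/2} \le C'$, where $C'$ is uniform in $t$ (since $\abs{\ell_1(t)}$ is bounded by passing to the limit in $\abs{v_1(t, R_1)} + C R_1^{-3/2}$, which is uniformly bounded by the compactness of~$K$). I then bootstrap by reinserting $\abs{v_1(t,r)} \lesssim 1$ together with $\abs{v_0(t,r)} \lesssim r^{1/6}$ into \eqref{v1 diff1} with $r' = 2r$, obtaining
\ant{
\abs{v_1(t, 2r) - v_1(t, r)} \lesssim r^{-5} \cdot r^{1/2} + r^{-2} \cdot 1 \lesssim r^{-2}.
}
Summing the geometric tail $\sum_{n \ge \log_2 r}(2^n)^{-2} \lesssim r^{-2}$, and once again filling in between dyadic scales with \eqref{v1 diff1}, yields the claimed rate $\abs{v_1(t, r) - \ell_1(t)} = O(r^{-2})$, uniformly in $t \in \R$.

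There is no real obstacle here; the only subtlety is ensuring uniformity in $t$ of the constant in the final bound, which I address by noting that both the constants in Corollary~\ref{diff esti} and the base values $\abs{v_0(t, R_1)}$, $\abs{v_1(t, R_1)}$ that enter the growth bounds \eqref{v0 grow}--\eqref{v1 grow} are controlled by $\sup_{t \in \R}\|\vec u(t)\|_{\HH}$, which is finite thanks to the pre-compactness of $K$.
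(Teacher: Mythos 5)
Your proposal is correct and follows essentially the same route as the paper: a dyadic Cauchy argument combining the growth bounds \eqref{v0 grow}--\eqref{v1 grow} with the difference estimate \eqref{v1 diff1} to get the summable $r^{-3/2}$ decay and hence the limit $\ell_1(t)$, followed by the same bootstrap (once $v_1$ is known to be bounded, the dyadic increments improve to $r^{-2}$, and summing the tail gives the rate). Your extra care about the uniformity in $t$ of the constants is consistent with how the paper handles it.
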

\begin{proof}
As usual, it suffices to show this for $t=0$. Let $r_0 \ge R_1$ where $R_1 $ is as in~\eqref{R1 de1}. Plugging~\eqref{v0 grow},~\eqref{v1 grow} into the difference estimates~\eqref{v1 diff1} gives 
\EQ{\label{v1 conv}
\abs{ v_1(2^{n+1}r_0) - v_1(2^n r_0) } &\lesssim (2^nr_0)^{-5} (2^nr_0)^{\frac{1}{2}} + (2^nr_0)^{-2} (2^nr_0)^{\frac{1}{2}}\\
& \lesssim  (2^nr_0)^{-\frac{3}{2}}
} 
Therefore the series 
\ant{
 \sum_n \abs{ v_1(2^{n+1}r_0) - v_1(2^n r_0) } < \infty,
 }
 which in turn implies the existence of an $\ell_1 \in \R$ so that 
 \ant{
  \lim_{n \to \infty} v_1(2^nr_0) = \ell_1.
  }
Using again the difference estimates~\eqref{v0 diff1}, ~\eqref{v1 diff1} as well as the growth estimates allows us to conclude that in fact, 
 \ant{
  \lim_{r \to \infty} v_1(r) = \ell_1.
  }
To obtain the rate of convergence, we note that the above also implies that $\abs{v_1(r)}$ is bounded, and hence  the same logic that provided~\eqref{v1 conv} can be upgraded to  give 
\ant{
\abs{ v_1(2^{n+1}r) - v_1(2^n r) } &\lesssim(2^nr)^{-2}
}
for every $r$ large enough.
Hence, 
\ant{
\abs{v_1(r) - \ell_1} = \abs{ \sum_{n \ge 0}(v_1(2^{n+1} r) - v_1(2^n r)) } \lesssim r^{-2} \sum_{n \ge 0} 2^{-2n} \lesssim r^{-2}
}
which finishes the proof of the claim.
\end{proof}
Next, we prove that the limit $\ell_1(t)$ is, in fact, independent of $t$. 
\begin{claim}\label{ell1 constant} The function $\ell_1(t)$ in Claim~\ref{ell1 claim} is independent of $t$, i.e., $\ell_1(t) =  \ell_1$ for all $t \in \R$.

\end{claim}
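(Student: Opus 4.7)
The plan is to show $\ell_1(t_2) - \ell_1(t_1) = 0$ for all $t_1, t_2 \in \R$ by deriving a dynamical identity for $v_1(t,r)$ and then passing to the limit $r \to \infty$ after averaging in $r$ over a dyadic shell.

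\emph{Step 1 (dynamical identity for $v_1$).} I would compute
\ant{
\p_t v_1(t,r) = r \int_r^\infty \rho\, u_{tt}(t,\rho)\, d\rho,
}
and substitute the equation $u_{tt} = u_{rr} + \frac{4}{r} u_r - Z_1(ru) u^3 - Z_2(ru) u^5$. Using the identity $\rho u_{rr} + 4 u_r = \p_\rho(\rho u_r) + 3 u_r$ and integrating by parts gives (modulo boundary terms at $\rho = \infty$ that I address below)
\ant{
\p_t v_1(t,r) = -r^2 u_r(t,r) - 3 r u(t,r) - r \int_r^\infty \rho\, \NN(u)(t,\rho)\, d\rho,
}
where $\NN(u) = Z_1(ru)u^3 + Z_2(ru)u^5$. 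Since $v_0 = r^3 u$ satisfies $\p_r v_0 = 3 r^2 u + r^3 u_r$, one has $r^2 u_r + 3 r u = \p_r v_0 / r$, so this simplifies to
\ant{
\p_t v_1(t,r) = -\frac{\p_r v_0(t,r)}{r} - r \int_r^\infty \rho\, \NN(u)(t,\rho)\, d\rho.
}

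\emph{Step 2 (time integration and dyadic averaging in $r$).} Integrating in $t \in [t_1, t_2]$ and averaging in $r$ over $[R, 2R]$,
\ant{
\frac{1}{R}\int_R^{2R} \bigl(v_1(t_2,r) - v_1(t_1,r)\bigr)\, dr = -\frac{1}{R}\int_R^{2R}\!\int_{t_1}^{t_2} \frac{\p_r v_0(t,r)}{r}\, dt\, dr - \frac{1}{R}\int_R^{2R}\!\int_{t_1}^{t_2} r\!\int_r^\infty\!\rho \NN(u)\, d\rho\, dt\, dr.
}
By Claim~\ref{ell1 claim} the left-hand side tends to $\ell_1(t_2) - \ell_1(t_1)$ with error $O(R^{-2})$. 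For the first term on the right, Cauchy--Schwarz and Lemma~\ref{v project} give
\ant{
\int_R^{2R} \frac{\abs{\p_r v_0(t,r)}}{r}\, dr \le R^{1/2}\, \| \pi_R^\perp \vec u(t)\|_{\dot H^1 \times L^2(r>R)} \lesssim R^{1/2} \cdot R^{-1} \bigl( R^{-3} v_0^2(t,R) + R^{-1} v_1^2(t,R)\bigr)^{3/2}.
}
Inserting the growth bounds $\abs{v_0(t,R)}^2, \abs{v_1(t,R)}^2 \lesssim R^{1/3}$ from~\eqref{v0 grow},~\eqref{v1 grow} the right-hand side is $\lesssim R^{-3/2}$ uniformly in $t$, and after the time integration and $R^{-1}$ averaging the contribution is $\lesssim (t_2 - t_1) R^{-5/2}$. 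For the nonlinear tail, the uniform pointwise bound $\abs{u(t,r)} = \abs{v_0(t,r)}/r^3 \lesssim r^{-17/6}$ together with~\eqref{Z bounds} yield $\bigl| r \int_r^\infty \rho \NN(u)\, d\rho \bigr| \lesssim r^{-11/2}$, so this term also vanishes as $R \to \infty$. Passing to the limit gives $\ell_1(t_2) = \ell_1(t_1)$, as desired.

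\emph{Main obstacle.} The principal technical issue is the integration by parts in Step~1: the pointwise vanishing of the boundary terms $\lim_{\rho \to \infty} \rho u_r(t,\rho)$ and $\lim_{\rho \to \infty} u(t,\rho)$ is not immediate from $\vec u(t) \in \HH$ alone. I would handle this by working with a truncated identity on $\rho \in [r, R']$ and exploiting the $L^2$ control on $r^{-1}\p_r v_0$ from Lemma~\ref{v project} to extract a sequence $R'_n \to \infty$ along which the boundary contributions vanish (the decay $u(t,R') \lesssim (R')^{-17/6} \to 0$ is automatic). Alternatively, one can bypass the pointwise dynamical identity entirely by testing the equation $\Box u = -\NN(u)$ against the cutoff $\chi(\rho/R')\, \mathbf{1}_{\{\rho \ge r\}}\, \rho$ and passing to the limit $R' \to \infty$; the $\HH$-regularity makes this rigorous, and the same averaging argument above then yields the conclusion.
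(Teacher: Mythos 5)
Your proposal is correct and follows essentially the same route as the paper: both arguments average $v_1(t_2,\cdot)-v_1(t_1,\cdot)$ over a dyadic shell $[R,2R]$, substitute $u_{tt}$ from the equation, integrate by parts in $r$, and kill the remaining terms using the decay $\abs{u(t,r)}\lesssim r^{-17/6}$ and the bound on the nonlinearity, letting $R\to\infty$. The only (harmless) variations are that you estimate the linear bulk term via Cauchy--Schwarz and Lemma~\ref{v project} rather than a second integration by parts, and that you explicitly justify the vanishing of the boundary terms at $\rho=\infty$, a point the paper's proof passes over silently.
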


\begin{proof} Recall that, by definition 
\ant{
v_1(t, r) = r\int_r^{\I} u_t(t, \rho) \, \rho \, d\rho
}
By Claim~\eqref{ell1 claim}, we then have 
\ant{
\ell_1(t) =  r\int_r^{\I} u_t(t, \rho) \, \rho \, d\rho + O(r^{-2}) \mas r \to \infty
}
Now, let $t_1, t_2 \in \R$ with  $t_1 \neq t_2$. We will show that 
\ant{
\ell_1(t_2)- \ell_1(t_1) =0
}
To see this, we begin by noting that averaging in $R \ge R_1$ gives 
\ant{
\ell_1(t_2) - \ell_1(t_1) 
&= \frac{1}{R} \int_R^{2R}\left(s \int_s^{\infty} (u_t(t_2, r) - u_t(t_1, r)) r \, dr \right) \, ds + O(R^{-2})\\
&=\frac{1}{R} \int_R^{2R}\left(s \int_s^{\infty}  \int_{t_1}^{t_2}u_{tt}(t, r) \, dt\,  r \, dr \right) \, ds + O(R^{-2})
}
Using the fact that $\vec u(t)$ is a solution to \eqref{u eq}, and writing the nonlinearity in~\eqref{u eq} as 
\ant{ 
\NN(r, u) = - Z_1(ru)u^3 - Z_2(ru)u^5
}
 we can rewrite the above integral as
\ant{
&=  \int_{t_1}^{t_2}\frac{1}{R} \int_R^{2R}\left(s \int_s^{\infty}  (ru_{rr}(t, r) + 4u_r(t, r) ) \, dr \right) \, ds\, dt + \\
&\, +  \int_{t_1}^{t_2} \frac{1}{R} \int_R^{2R}\left(s \int_s^{\infty}\NN(r, u(t,r))   \, dr \right) \,ds\, dt  + O(R^{-2})\\
& = A + B +  O(R^{-2})
}
To estimate $A$ we integrate by parts twice: 
\EQ{ \label{I}
A&= \int_{t_1}^{t_2}\frac{1}{R} \int_R^{2R}\left(s \int_s^{\infty}  \frac{1}{r^3} \p_r(r^4 u_r(t, r)) \, dr \right) \, ds\, dt\\
& = 3\int_{t_1}^{t_2}\frac{1}{R} \int_R^{2R}\left(s \int_s^{\infty}  u_r(t, r) \, dr \right) \, ds\, dt -\int_{t_1}^{t_2}\frac{1}{R} \int_R^{2R}s^2  u_r(t, s) \, ds\, dt \\
& = -3\int_{t_1}^{t_2}\frac{1}{R} \int_R^{2R}r \, u(t, r) \, dr\, dt -\int_{t_1}^{t_2}\frac{1}{R} \int_R^{2R}r^2\, u_r(t, r) \, dr\, dt \\
&= -\int_{t_1}^{t_2}\frac{1}{R} \int_R^{2R}r \, u(t, r) \, dr\, dt + \int_{t_1}^{t_2} (Ru(t,R)-2Ru(t, 2R))\, dt
}
To control the last line above we recall that by the definition of $v_0(t, r)$ and ~\eqref{v0 grow} we have 
\EQ{\label{u control}
r^3\abs{u(t, r)} := \abs{v_0(t, r))} \lesssim r^{\frac{1}{6}}
}
uniformly in $t \in \R$, and hence $\abs{u(t, r)} \lesssim r^{-\frac{17}{6}}$ uniformly in $t \in\R$. Plugging this into the last line in the estimate for $A$ yields, 
\ant{
A = \abs{t_2-t_1}O(R^{-\frac{11}{6}})
}
To estimate $B$ we use~\eqref{u control} and the rough estimates for the size of $\abs{\NN(u)}$ implied by~\eqref{Z bounds} to see that for $r>R_1$ we have 
\ant{
\abs{\NN(r, u(t, r))} \lesssim \abs{u(t, r)}^3 + \abs{u(t, r)}^5 \lesssim r^{-\frac{17}{2}}  \lesssim r^{-8}
}
Therefore, 
\ant{
B \lesssim \int_{t_1}^{t_2} \frac{1}{R} \int_R^{2R}\left(s \int_s^{\infty}r^{-8}   \, dr \right) \,ds\, dt = \abs{t_2- t_1} O(r^{-6})
}
Putting this all together gives
\ant{
\abs{\ell_1(t_2)- \ell_1(t_1)}  = O(R^{-\frac{11}{6}}) \mas R \to \infty
}
which implies that $\ell_1(t_1) = \ell_1(t_2)$ as desired.
\end{proof}
The next step is to show that $\ell_1 = 0$. 
\begin{claim}\label{ell1=0}  $\ell_1 =0$. 
\end{claim}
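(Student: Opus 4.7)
The plan is to exploit the $t$-independence of $\ell_1$ (Claim~\ref{ell1 constant}) through a time-averaging argument. First, I would combine the growth bound~\eqref{v0 grow} with the definition $v_0(t,r) = r^3 u(t,r)$ to deduce the uniform pointwise decay
\[ |u(t,r)| \lesssim r^{-\frac{17}{6}} \quad \text{for all } r \ge R_1 \text{ and all } t \in \R. \]
Integrating this against $\rho\, d\rho$ produces the uniform tail bound
\[ \left| \int_r^\infty u(t,\rho)\,\rho\, d\rho \right| \lesssim r^{-\frac{5}{6}} \quad \text{for } r \ge R_1. \]

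Next, for any $T > 0$, I would time-average the conclusion of Claim~\ref{ell1 claim} to obtain
\[ \ell_1 = \frac{1}{T} \int_0^T v_1(t,r)\, dt + O(r^{-2}). \]
Unwinding the definition of $v_1$, applying Fubini, and using the fundamental theorem of calculus would then give
\[ \frac{1}{T} \int_0^T v_1(t,r)\, dt = \frac{r}{T} \int_r^\infty (u(T,\rho) - u(0,\rho))\,\rho\, d\rho. \]
The Fubini step is legitimate because the pre-compactness of $K$ in $\HH$ furnishes a uniform $L^2$-bound on $u_t(t,\cdot)$, which with Cauchy-Schwarz yields $\int_r^\infty |u_t(t,\rho)|\rho\, d\rho \lesssim r^{-1/2}$, integrable in $t$ on $[0,T]$. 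The tail bound of the previous paragraph then controls the right-hand side in modulus by $C r^{1/6}/T$, so altogether
\[ |\ell_1| \le \frac{C r^{1/6}}{T} + C' r^{-2}. \]

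I would close by fixing $r \ge R_1$ and sending $T \to \infty$ to conclude $|\ell_1| \le C' r^{-2}$, and then sending $r \to \infty$ to force $\ell_1 = 0$. I do not anticipate a significant obstacle: the essential inputs—uniform spatial decay from~\eqref{v0 grow} and the time-independence of $\ell_1$ from Claim~\ref{ell1 constant}—are already in hand, and the mechanism is simply that pairing a bounded-in-$t$ integrand with a long time-average produces an extra $1/T$ factor that defeats the modest $r^{1/6}$ growth. The only technical care is in the Fubini interchange, which is immediate from the pre-compactness of $K$.
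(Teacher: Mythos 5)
Your proposal is correct and follows essentially the same route as the paper: both time-average $v_1(t,r)=r\int_r^\infty u_t(t,\rho)\,\rho\,d\rho$ over $[0,T]$, apply Fubini and the fundamental theorem of calculus to reduce to $u(T,\cdot)-u(0,\cdot)$, and beat the resulting $O(r^{1/6})$ bound (from $|u|\lesssim r^{-17/6}$) against the $O(T)$ growth of the time integral. The only cosmetic difference is that the paper runs the argument as a contradiction using the constancy of the sign of the integrand, whereas you average the identity $v_1(t,r)=\ell_1+O(r^{-2})$ directly, which is a slightly cleaner packaging of the same mechanism.
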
 
\begin{proof} Suppose $\ell_1 \neq 0$. 
We know that for all $R \ge R_1$  and for all $t \in \R$ we have 
\ant{
R\int_R^{\infty}u_t(t, r) \, r \, dr = \ell_1 + O(R^{-2}),
}
where $O(\cdot)$ is uniform in~$t$. Therefore, by taking $R$ large, the left-hand side above has the same sign as $\ell_1$, for all $t$.
Thus we can choose $R \ge R_1$ large enough so that for all $t \in \R$, 
\ant{
\abs{ R\int_{R}^{\infty}u_t(t, r) \, r \, dr }\ge \frac{ |\ell_1|}{2}.
}
Integrating from $t=0$ to $t=T$ gives
\ant{
\Big| \int_0^TR\int_{R}^{\infty}u_t(t, r) \, r \, dr\, dt\Big|  \ge T\frac{|\ell_1|}{2} .
}
However, we integrate in $t$ on the left-hand side and use \eqref{u control} to obtain
\ant{
\abs{R\int_{R}^{\infty}\int_0^Tu_t(t, r) \, r \, dt \, dr }&= \abs{ R\int_R^{\infty}(u(T, r) - u(0,r) )\, r \, dr}\\
& \lesssim R\int_R^{\infty} r^{-\frac{11}{6}}\, dr
\lesssim R^{\frac{1}{6}}.
}
Therefore for fixed large $R$  we have 
\ant{
T\frac{|\ell_1|}{2}  \lesssim R^{\frac{1}{6}},
}
which gives a contradiction by taking $T$ large enough. 
\end{proof}

We can now finish the proof of Lemma~\ref{spacial decay}. 
\begin{proof}[Proof of Lemma~\ref{spacial decay}]
We note that combining the results of Claims~\ref{ell1 claim},~\ref{ell1 constant},~\ref{ell1=0}, we have established~\eqref{u1 decay} and~\eqref{u1 rate}, namely that 
\EQ{\label{v1 to 0}
\abs{v_1(r)} = O(r^{-2}) \mas r \to 0.
}
 It therefore remains to show that there exists $\ell_0 \in \R$ so that 
\EQ{
\abs{v_0(r) - \ell_0} = O(r^{-4}) \mas r \to \infty.
}
To prove this, we plug~\eqref{v1 to 0} along with the growth estimates~\eqref{v0 grow} into the difference estimate~\eqref{v0 diff1}. We see that for fixed $r_0 \ge R_1$ and all $n \in \N$ we have 
\ant{
\abs{v_0(2^{n+1}r_0) - v_0(2^nr_0)} &\lesssim (2^nr_0)^{-4} (2^nr_0)^{\frac{1}{2}} + (2^nr_0)^{-1} (2^nr_0)^{-6}\\
&\lesssim (2^nr_0)^{-\frac{7}{2}}.
}
Therefore the series 
\ant{
 \sum_n \abs{ v_0(2^{n+1}r_0) - v_0(2^n r_0) } < \infty,
 }
 which in turn implies the existence of an $\ell_0 \in \R$ so that 
 \ant{
  \lim_{n \to \infty} v_0(2^nr_0) = \ell_0.
  }
Using again the difference estimates~\eqref{v0 diff1} and the fact that $\abs{v_1(r)} \to 0$ allows us to conclude that in fact, 
 \ant{
  \lim_{r \to \infty} v_0(r) = \ell_0.
  }
To obtain the rate of convergence, we note that the above also implies that $\abs{v_0(r)}$ is bounded, and hence the difference estimate can be upgraded to  
\ant{
\abs{ v_0(2^{n+1}r) - v_0(2^n r) } &\lesssim(2^nr)^{-4}
}
which holds for every $r \ge R_1$.
Hence, 
\ant{
\abs{v_0(r) - \ell_0} = \abs{ \sum_{n \ge 0}(v_1(2^{n+1} r) - v_1(2^n r)) } \lesssim r^{-4} \sum_{n \ge 0} 2^{-4n} \lesssim r^{-2},
}
which finishes the proof.
\end{proof}

\subsection{Step $3$} We complete the proof of Proposition~\ref{rigid} by showing that $\vec u(t, r)  \equiv (0, 0)$. We separate the argument into two cases depending on whether the number $\ell_0$ in Lemma~\ref{spacial decay} is zero or nonzero.

\begin{flushleft} \textbf{Case 1: $\ell_0=0$ implies $\vec u(t)\equiv(0, 0)$:} 
\end{flushleft}

We formulate this step as a lemma: 
\begin{lem} \label{ell=0 lem}Let $\vec u(t)$ be as in Proposition~\ref{rigid} and let $\ell_0\in \R$ be as in Lemma~\ref{spacial decay}.  If $\ell_0 = 0$ then $\vec u(t)  \equiv (0, 0)$. 
\end{lem}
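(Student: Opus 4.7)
\emph{Plan.} The plan combines $\ell_0 = 0$ with $\ell_1 = 0$ (Claim~\ref{ell1=0}) to bootstrap the decay rates of Lemma~\ref{spacial decay} into super-polynomial decay of $(v_0, v_1)$, then converts that decay into a multiplicative lower bound that forces $\vec u$ to vanish on a large exterior region, and finally shrinks this region down to $\{0\}$. Concretely, starting from $|v_0(t,r)| \lesssim r^{-4}$ and $|v_1(t,r)| \lesssim r^{-2}$ uniformly in $t \in \R$ (uniformity coming from pre-compactness of $K$), I would feed any pair of polynomial bounds $|v_0(t,r)| \le C r^{-\alpha}$, $|v_1(t,r)| \le C r^{-\beta}$ back into~\eqref{v0 diff1}--\eqref{v1 diff1} and telescope exactly as in Claim~\ref{ell1 claim} and Lemma~\ref{spacial decay}, improving the exponents to $\min(4+3\alpha,\, 1+3\beta)$ and $\min(5+3\alpha,\, 2+3\beta)$. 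Iterating from $(\alpha,\beta) = (4,2)$ drives both exponents to infinity, yielding $|v_0(t,r)| + |v_1(t,r)| \le C_N r^{-N}$ for every $N \ge 1$, and hence $\delta_R(t) := \|\vec u(t)\|^2_{\dot H^1 \times L^2(r \ge R)} \le C_N R^{-N}$ uniformly in $t$.

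\emph{Vanishing and shrinking.} Next, set $M(t,r) := |v_0(t,r)| + |v_1(t,r)|$. Using~\eqref{v u project} to bound $r^{-3} v_0^2(t,r) + r^{-1} v_1^2(t,r) \lesssim \delta_r(t)$ and substituting into Corollary~\ref{diff esti} factors the difference estimates as
\ant{
|M(t,r') - M(t,r)| \lesssim \delta_r(t)\, M(t,r), \qquad R_1 \le r \le r' \le 2r.
}
Iterating between $R_1$ and $2^n R_1$ gives the two-sided multiplicative bound $M(t, 2^n R_1) = M(t, R_1) \prod_{k=0}^{n-1}\bigl(1 + O(\delta_{2^k R_1}(t))\bigr)$, and since $\sum_k \delta_{2^k R_1}(t)$ is uniformly summable (by the Bootstrap), this product is bounded below by a positive, $t$-independent constant. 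Letting $n \to \infty$ while using $M(t,r) \to 0$ uniformly in $t$ then forces $M(t, R_1) = 0$, and running the same argument at any $R \ge R_1$ gives $\vec u(t) \equiv (0,0)$ on $\{r \ge R_1\}$ for every $t \in \R$. To propagate the vanishing inward, define
\ant{
R^* := \inf\{\, R > 0 \,:\, \sup_{t \in \R} \delta_R(t) < \delta_0 \,\},
}
with $\delta_0$ the small-data threshold from Lemma~\ref{h small data}. The Bootstrap plus Vanishing argument applies verbatim with any $R > R^*$ in place of $R_1$, yielding $\vec u(t) \equiv 0$ on $\{r > R^*\}$ for all $t$; if $R^* = 0$ we are done.

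\emph{Main obstacle.} The remaining difficulty is ruling out $R^* > 0$. Both the bootstrap and the multiplicative trick rely essentially on the smallness of $\delta_R$ and cannot be continued into a region where $\delta_R$ is of unit size. Excluding $R^* > 0$ amounts to ruling out a ``trapped'' solution of~\eqref{u eq}, supported in the fixed ball $\overline{B_{R^*}}$ for all $t \in \R$, with pre-compact $\HH$-trajectory---without recourse to Morawetz-type identities, as emphasized in the introduction. The intended mechanism is the channels-of-energy philosophy of Proposition~\ref{linear prop} coupled with the elliptic analysis of Section~\ref{stat sols}: for instance, extracting a time-translation limit $\vec u(t_n) \to \vec u_\infty$ in $\HH$ and hoping either to produce a nontrivial stationary solution in $\E_0$ (excluded by Proposition~\ref{ode prop}) or to derive a direct contradiction via finite speed of propagation. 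It is here that the bulk of the technical work of the rigidity proof is expected to lie.
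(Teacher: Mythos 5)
Your bootstrap and the multiplicative ``vanishing'' step are sound and do establish the first half of the argument: $\vec u(t)$ vanishes on $\{r\ge R_1\}$ for all $t$, i.e., the data is compactly supported. (The paper gets to the same place more directly, without super-polynomial decay: from $|v_0|\lesssim r^{-4}$, $|v_1|\lesssim r^{-2}$ the difference estimates give the lower bound $|v_0(2^{n}r_0)|+|v_1(2^{n}r_0)|\ge (3/4)^n\bigl(|v_0(r_0)|+|v_1(r_0)|\bigr)$, which against the upper bound $\lesssim 2^{-2n}$ forces $v_0(r_0)=v_1(r_0)=0$ at a fixed large $r_0$; Lemma~\ref{project ineq} then kills $\pi_{r_0}^\perp\vec u$ as well. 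Both routes are fine.)

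The genuine gap is the last step, ruling out $R^*>0$, and your diagnosis of it is off: no channels-of-energy, no extraction of time-translation limits, and no stationary solutions are needed here (those belong to the $\ell_0\neq 0$ case). The idea you are missing is that \emph{compact support itself supplies the smallness you need near the boundary of the support}, by absolute continuity of the integral. Set $\rho_0:=\inf\{\rho:\ \|\vec u(0)\|_{\dot H^1\times L^2(r\ge\rho)}=0\}$ and suppose $\rho_0>0$. Since the data vanishes for $r\ge\rho_0$, for $\rho_1\in(\rho_0/2,\rho_0)$ close to $\rho_0$ the quantity $\|\vec u(0)\|^2_{\dot H^1\times L^2(r\ge\rho_1)}=\int_{\rho_1}^{\rho_0}(\cdots)\,r^4\,dr$ is as small as you like --- this is smallness of the \emph{exterior} energy at the radius $\rho_1$, which is all that Lemma~\ref{h small data}, Lemma~\ref{project ineq} and the difference estimates require (the various factors $\rho_1^{-1},\rho_1^{-4}$ are harmless because $\rho_1>\rho_0/2$ with $\rho_0$ fixed, so they are absorbed into constants depending only on $\rho_0$). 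Applying the difference estimate between $\rho_1$ and $\rho_0$ and using $v_0(\rho_0)=v_1(\rho_0)=0$ gives $|v_0(\rho_1)|+|v_1(\rho_1)|\le C(\rho_0)\,\e\,(|v_0(\rho_1)|+|v_1(\rho_1)|)$, hence $v_0(\rho_1)=v_1(\rho_1)=0$ for $\e$ small; Lemma~\ref{project ineq} then gives $\|\vec u(0)\|_{\dot H^1\times L^2(r\ge\rho_1)}=0$, contradicting the minimality of $\rho_0$. In your notation this is precisely the statement that $R^*=0$: once $\vec u(t)\equiv 0$ for $r>R^*$, pre-compactness of $K$ makes $\sup_t\delta_R(t)\to 0$ as $R\uparrow R^*$, so some $R<R^*$ already lies in the set defining $R^*$. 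Without this observation your proof is incomplete; with it, it closes in a few lines.
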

To prove the lemma, we will first establish the following claim, which says that if $\ell_0 = 0$ then $(u_0, u_1)$ must be compactly supported. We then show that the only solution with pre-compact trajectory and compactly supported initial data is necessarily $\vec u(t) = (0, 0)$. 
\begin{claim}\label{comp supp}Let $\ell_0$ be as in Lemma~\ref{spacial decay}. If $\ell_0=0$ then $(u_0, u_1)$ is compactly supported. 
\end{claim}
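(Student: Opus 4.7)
The plan is to bootstrap the asymptotic decay rates from Lemma~\ref{spacial decay} into arbitrarily fast polynomial decay, and then to exploit the resulting compounding rate to force $v_0(R)=v_1(R)=0$ outside a compact set. When $\ell_0=0$, Lemma~\ref{spacial decay} supplies the initial bounds
\[
|v_0(R)|\le M_0 R^{-4},\qquad |v_1(R)|\le M_0 R^{-2},\qquad R\ge R_1,
\]
for some constant $M_0=M_0(\vec u(0))$; in particular both functions satisfy $|v_0(r)|,|v_1(r)|\le M_0 r^{-N_0}$ on $\{r\ge R_1\}$ with $N_0:=2$.

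The first step is to iterate the cubic difference estimates~\eqref{v0 diff1}--\eqref{v1 diff1}. Using $v_0(\infty)=\ell_0=0$ and the already-established $v_1(\infty)=0$, I telescope
\[
v_0(R)=-\sum_{n\ge 0}\bigl(v_0(2^{n+1}R)-v_0(2^nR)\bigr),
\]
and similarly for $v_1$. Combining with the inductive hypothesis $|v_0(r)|,|v_1(r)|\le M r^{-N}$ on $\{r\ge R_1\}$, each dyadic difference is bounded via~\eqref{v0 diff1}--\eqref{v1 diff1}, and summing the resulting geometric series yields
\[
|v_0(R)|\le C M^{3}R^{-3N-1},\qquad |v_1(R)|\le C M^{3}R^{-3N-2},
\]
with a single absolute constant $C$. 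Hence the hypothesis propagates under the recursion $(N_{k+1},M_{k+1})=(3N_k+1,\,CM_k^{3})$, which solves in closed form as
\[
N_k=\tfrac{5}{2}\cdot 3^k-\tfrac{1}{2},\qquad M_k=C^{-1/2}D^{3^k},\qquad D:=C^{1/2}M_0.
\]

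The decisive observation is that the resulting estimate
\[
|v_0(R)|+|v_1(R)|\;\le\;2M_k R^{-N_k}\;=\;\frac{2R^{1/2}}{C^{1/2}}\Bigl(\frac{D}{R^{5/2}}\Bigr)^{3^k}
\]
collapses as $k\to\infty$ for every fixed $R>D^{2/5}$. Consequently $v_0\equiv v_1\equiv 0$ on $\{r\ge R_0\}$ with $R_0:=\max(R_1,\,2D^{2/5})$. Unwinding the definitions $v_0(r)=r^3u_0(r)$ and $v_1(r)=r\int_r^{\infty}u_1(\rho)\rho\,d\rho$ gives $u_0\equiv 0$ on $\{r\ge R_0\}$ directly, while differentiating the identity $v_1\equiv 0$ on $\{r\ge R_0\}$ produces $-r^{2}u_1(r)=0$, hence $u_1\equiv 0$ there as well. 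This establishes the claimed compact support.

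The only technical point that demands care is uniformity: the constant $C$ in the one-step iteration must be truly absolute and independent of $k$, because $M$ is cubed at each stage. This is automatic since~\eqref{v0 diff1}--\eqref{v1 diff1} come with a uniform constant valid on every dyadic annulus $[r,2r]$ with $r\ge R_1$, and the telescoping geometric sums are bounded uniformly in $N\ge 0$. Without this uniformity the growth of $M_k$ would overwhelm the gain in $N_k$ and the argument would fail; with it, the tripling of the exponent decisively beats the cubing of the constant, which is the heart of the proof.
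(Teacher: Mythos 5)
Your proof is correct, but it takes a genuinely different route from the paper. The paper also starts from the decay $|v_0(r)|=O(r^{-4})$, $|v_1(r)|=O(r^{-2})$ and the cubic difference estimates \eqref{v0 diff1}--\eqref{v1 diff1}, but instead of bootstrapping the rate it uses the known decay to \emph{linearize} the cubic right-hand sides ($r^{-4}|v_0|^3\lesssim r^{-12}|v_0|$, etc.) and obtains a near-isometry \emph{lower} bound along dyadic scales: with $a_n=|v_0(2^nr_0)|$, $b_n=|v_1(2^nr_0)|$ one gets $a_{n+1}+b_{n+1}\ge(1-2C_1r_0^{-5})(a_n+b_n)\ge(3/4)^n(a_0+b_0)$ for $r_0$ large, which contradicts the upper bound $a_n+b_n\lesssim 4^{-n}$ unless $a_0=b_0=0$; it then invokes Lemma~\ref{project ineq} once more to upgrade $\pi_{r_0}\vec u(0)=0$ to vanishing of the full exterior $\dot H^1\times L^2$ norm. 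Your bootstrap ($N\mapsto 3N+1$, $M\mapsto CM^3$) reaches the same conclusion by sending the polynomial decay order to infinity, and it has the small advantage of producing $v_0=v_1=0$ on an entire exterior region at once, so the final appeal to the projection inequality is not needed. The cost is the bookkeeping of the doubly exponential constants $M_k$, and you correctly identify the one point where the argument could break: the one-step constant must be absolute, which it is because the implicit constants in \eqref{v0 diff1}--\eqref{v1 diff1} are uniform on $\{r\ge R_1\}$ and the telescoped geometric sums are bounded uniformly in $N$. Both arguments run on the same engine, namely the superlinearity of the difference estimates; yours exploits it through exponent tripling, the paper's through a contraction factor that can be made arbitrarily close to $1$.
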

\begin{proof}
We being by noting that the assumption $\ell_0 = 0$ implies that for $r \ge R_1$, 
\EQ{ \label{ell0 means}
\abs{v_0(r)}  = O (r^{-4}) \mas r \to \infty\\
\abs{v_1(r)}  = O( r^{-2}) \mas r \to \infty
}
This means that for $r_0 \ge R_1$ we have 
\EQ{\label{low bound}
\abs{v_0(2^nr_0)} + \abs{v_1(2^nr_0)} \lesssim (2^nr_0)^{-4} +(2^nr_0)^{-2}
}
On the other hand, the difference estimates~\eqref{v0 diff1} and~\eqref{v1 diff1} together with~\eqref{ell0 means} give
\ant{
&\abs{v_0(2^{n+1} r_0)} \ge (1-C_1(2^nr_0)^{-12}) \abs{v_0(2^nr_0)} - C_1(2^nr_0)^{-5} \abs{v_1(2^nr_0)}\\
&\abs{v_1(2^{n+1} r_0)} \ge (1-C_1(2^nr_0)^{-6}) \abs{v_0(2^nr_0)} - C_1(2^nr_0)^{-13} \abs{v_1(2^nr_0)}
}
For large $r_0$ we can combine the above estimates to obtain 
\ant{
\abs{v_0(2^{n+1} r_0)} + \abs{v_1(2^{n+1} r_0)} \ge (1- 2C_1r_0^{-5})( \abs{v_0(2^{n} r_0)} + \abs{v_1(2^{n} r_0)})
}
Fixing $r_0$ large enough so that $2C_1r_0^{-5} < \frac{1}{4}$ and arguing inductively we conclude that 
\ant{
( \abs{v_0(2^{n} r_0)} + \abs{v_1(2^{n} r_0)}) \ge \left(\frac{3}{4}\right)^{n} ( \abs{v_0(r_0)} + \abs{v_1( r_0)})
}
Now, use~\eqref{low bound} to estimate the left-hand-side above to get 
\ant{
\left(\frac{3}{4}\right)^{n} ( \abs{v_0(r_0)} + \abs{v_1( r_0)}) \lesssim 2^{-2n} r_0^{-2}
}
which, in turn means that 
\ant{
3^n ( \abs{v_0(r_0)} + \abs{v_1( r_0)}) \lesssim 1, 
}
which is impossible unless $(v_0(r_0), v_1(r_0)) = (0, 0)$.
Hence, $$\vec v(r_0):= (v_0(r_0), v_1(r_0)) = (0, 0).$$ To turn this into a statement about $(u_0, u_1)$ we note that~\eqref{v u project} implies that 
\ant{
\| \pi_{r_0} \vec u(0)\|_{ \dot{H}^1 \times L^2(r \ge r_0)} = 0.
}
By Lemma~\ref{project ineq} we can also then deduce that 
\ant{
\| \pi_{r_0}^{\perp} \vec u(0)\|_{ \dot{H}^1 \times L^2(r \ge r_0)} = 0,
}
and hence 
\ant{
\|  \vec u(0)\|_{ \dot{H}^1 \times L^2(r \ge r_0)} = 0,
}
which concludes the proof since $\lim_{r \to \infty} u_0(r) = 0$. 
\end{proof}

\begin{proof}[Proof of Lemma~\ref{ell=0 lem}] Assume that $\ell_0 = 0$. Then, by Claim~\ref{comp supp}, $(u_0, u_1)$ is compactly supported. We assume that $(u_0, u_1) \not \equiv (0, 0)$ and argue by contradiction. In this case we define $\rho_0>0$ by 
\ant{
\rho_0:= \inf\{ \rho \, \, : \, \,  \| \vec u(0)\|_{\dot{H}^1 \times L^2(r \ge \rho)} = 0\}
}
Let $\e>0$ be a small number to be determined below. Choose $\rho_1 = \rho_1( \e)$ so that $\frac{ 1}{2} \rho_0 < \rho_1 < \rho_0$  so that 
\ant{
0< \| \vec u(0)\|_{\dot{H}^1 \times L^2(r \ge \rho_1)}^2  \le \e \le \de_1^2
}
where $\de_1$ is as in~\eqref{R1 de1}. With $(v_0, v_1)$ as in~\eqref{v def} we have 
\EQ{\label{all small}
3 \rho_1^{-3} v_0^2(\rho_1) + \rho_1^{-1}v_1^2(\rho_1)&+ \int_{\rho_1}^\I \left( \frac{1}{r} \p_r v_0( r) \right)^2 \, dr + \int_{\rho_1}^\I \left( \p_r v_1(r) \right)^2 \, dr  = \\
& = \| \pi_{\rho_1} \vec u(0)\|^2_{\dot{H}^1 \times L^2(r \ge \rho_1)}+\| \pi_{\rho_1}^{\perp} \vec u(0)\|^2_{\dot{H}^1 \times L^2(r \ge \rho_1)} \\
&= \|  \vec u(0)\|^2_{\dot{H}^1 \times L^2(r \ge \rho_1)} < \e
}
We also record the result of Lemma~\ref{v project} with $R= \rho_1$: 
\EQ{\label{v rho1}
\left( \int_{\rho_1}^\I \left[\left( \frac{1}{r} \p_r v_0(r) \right)^2 + \left( \p_r v_1( r) \right)^2 \right]\, dr\right)^{\frac{1}{2}}  \lesssim \rho_1^{-\frac{11}{2}} \abs{v_0( \rho_1)}^3 + \rho_1^{-\frac{5}{2}}\abs{v_1( \rho_1)}^3
}
Since $v_0( \rho_0) = v_1( \rho_0) = 0$ we can argue as in Corollary~\ref{diff esti} and Corollary~\ref{diff esti 2} to obtain
\ant{
&\abs{v_0( \rho_1)} = \abs{v_0(\rho_1) - v_0( \rho_0)} \le C_1\, \e\, (\abs{v_0( \rho_1)} + \abs{v_1( \rho_1)})\\
&\abs{v_1( \rho_1)} = \abs{v_1(\rho_1) - v_1( \rho_0)} \le C_2\, \e\, (\abs{v_0( \rho_1)} + \abs{v_1( \rho_1)})
}
where here we have used that $\frac{1}{2} \rho_0< \rho_1 < \rho_0$ to obtain constants $C_1, C_2$ which depend only $\rho_0$ which is fixed, and the uniform constant in~\eqref{v rho1}, but not on~$\e$. 
Putting the above estimates together yields
\EQ{
(\abs{v_0( \rho_1)} + \abs{v_1( \rho_1)}) \le C_3 \e (\abs{v_0( \rho_1)} + \abs{v_1( \rho_1)})
}
which implies that $\abs{v_0( \rho_1)} = \abs{v_1( \rho_1)} = 0$ by taking $\e>0$ small enough. By~\eqref{v rho1} and the equalities in~\eqref{all small} we can deduce that 
\ant{
\| \vec u(0)\|_{\dot{H}^1 \times L^2(r \ge \rho_1)} = 0
}
which contradicts the definition of $\rho_0$ since $\rho_1< \rho_0$.
\end{proof}
This completes the proof in the case that $\ell_0 = 0$. We now proceed to examine the case when $\ell_0 \neq0$. 
\vspace{\baselineskip}

\begin{flushleft} \textbf{Case 2: $\ell_0 \neq 0$ is impossible:} \end{flushleft}
In this final part of the rigidity argument we show that the case $\ell_0 \neq 0$ is impossible. In fact, we prove that if $\ell_0 \neq0$ then the solution to equation~\eqref{an eq} given by $\psi(t, r)  = r u(t, r)$ is equal to a nonzero stationary solution, $\fy_{\ell_0}$, of~\eqref{ode}. However, we know by Proposition~\ref{ode prop} that $(\fy_{\ell_0}, 0) \not \in \E_0$ for any $\ell_0 \neq 0$, which is a contradiction since by construction our critical element $\vec u(t) \in \HH$ and hence $\vec \psi(t) \in \E_0$ by~\eqref{finite energy in HH}.

The main idea will be to linearize about the elliptic solution $\fy_{\ell_0}$ given by Proposition~\ref{ode prop} with the same spacial asymptotic behavior as our critical data $\vec \psi(0) = r \vec u(0)$.  In the previous steps we have shown that 
\ant{
r^3u_0(r) = \ell_0 + O(r^{-4}) \mas r \to \infty.
}
Since $ \psi_0(r):=ru_0(r)$ we then have 
\ant{
\psi_0(r) = \ell_0r^{-2} + O(r^{-6}) \mas r \to \infty.
}
Now, let $\fy_{\ell_0}(r)$ be given as in Proposition~\ref{ode prop}, which means that $\fy_{\ell_0}$ solves the elliptic equation~\eqref{ode} and satisfies
\ant{
\fy_{\ell_0}(r)= \ell_0r^{-2} + O(r^{-6}) \mas r \to \infty.
}
We then define $ \vec u_{\ell_0}(0) = ( u_{\ell_0, 0}, u_{\ell_0, 1})$ by 
\EQ{\label{ul def}
&u_{\ell_0, 0}(r) := \frac{1}{r}( \psi_0(r) - \fy_{\ell_0}(r))\\
&u_{\ell_0, 1}(r):= \frac{1}{r} \psi_1(r)
}
For each $t \in \R$ we also define 
\EQ{\label{ul t def}
u_{\ell_0}(t, r):= \frac{1}{r}( \psi(t, r) - \fy_{\ell_0}(r))
}
where of course $\psi(t, r) = ru(t, r)$. We can record several properties of $\vec u_{\ell_0} = ( u_{\ell_0}, \p_t u_{\ell_0})$. By construction we have 
\EQ{\label{ul limits}
&v_{\ell_0, 0}(r):= r^3 u_{\ell_0, 0}(r) = O(r^{-4}) \mas r \to \infty\\
&v_{\ell_0, 1}(r):= r \int_r^{\infty} u_{\ell_0, 1}(\rho) \rho, d\rho = O(r^{-2}) \mas r \to \infty
}
Also, using that $\vec \psi(t)$ solves~\eqref{an eq} and $\fy_{\ell_0}$ solves \eqref{ode} we can deduce that $\vec u_{\ell_0}(t)$  satisfies the equation 
\EQ{\label{ul eq}
\p_{tt}u_{\ell_0} - \p_{rr} u_{\ell_0} - \frac{4}{r} \p_r u_{\ell_0} &= -V_{1}(r, \fy_{\ell_0}) u - V_{2}(r, \fy_{\ell_0}) u \\
& \quad + \NN_{1}(r, \fy_{\ell_0}, u_{\ell_0}) +\NN_{2}(r, \fy_{\ell_0},u_{\ell_0})
}
where we have the estimates
\EQ{\label{nonlin size}
\abs{V_{1}(r, \fy_{\ell_0})}& \lesssim r^{-2} \abs{\fy_{\ell_0}}^2 \\
 \abs{V_{2}(r, \fy_{\ell_0})} &\lesssim r^{-4} \abs{\fy_{\ell_0}}^4 \\
 \abs{\NN_{1}(r, \fy_{\ell_0}, u_{\ell_0}) }& \lesssim r^{-3} \abs{\fy_{\ell_0}}\abs{ru_{\ell_0}}^2 +  \abs{u_{\ell_0}}^3\\
 \abs{\NN_{2}(r, \fy_{\ell_0},u_{\ell_0})} &\lesssim r^{-5} \abs{\fy_{\ell_0}}^3\abs{ru_{\ell_0}}^2 +r^{-5} \abs{\fy_{\ell_0}}^2\abs{ru_{\ell_0}}^3\\
 &\quad+r^{-5} \abs{\fy_{\ell_0}}\abs{ru_{\ell_0}}^4 + \abs{u_{\ell_0}}^5
 }
 The form of the right-hand-side of~\eqref{ul eq} and the above estimates can be proved by reducing the terms
 \ant{
 &\frac{\sin2(\fy_{\ell_0} +ru_{\ell_0}) -2(\fy_{\ell_0} +ru_{\ell_0}) - \sin2\fy_{\ell_0}+ 2\fy_{\ell_0}}{r^3}\\
 &\frac{ f(\fy_{\ell_0} +ru_{\ell_0}) - f(\fy_{\ell_0})}{r^5}
 }
 where $f(x):=(x- \cos x\sin x)(1-\cos(2x)$, using trigonometric identities. 
 
 The crucial point is that by construction $\vec u_{\ell_0}$ inherits the compactness property~\eqref{compact ext en} from $\vec \psi(t)$ since $\fy_{\ell_0}$ is stationary. By this we mean that the trajectory,
 \ant{
 \ti K:= \{ \vec u_{\ell_0}(t) \mid t \in \R\}
 }
 is pre-compact in $\dot{H}^1 \times L^2(\R^5)$ and hence for each $R > 0$ we have 
 \EQ{
 \|\vec u_{\ell_0}(t)\|_{\dot{H}^1 \times L^2(r \ge R+ \abs{t})} \to 0 \mas \abs{t} \to \infty
 }
With this set-up we are in position to prove, much as in the $\ell_0 = 0$ case that we must have $\vec u_{\ell_0} \equiv (0, 0)$. In particular, we prove the following lemma. 
\begin{lem}\label{ul 0}
Suppose $\ell_0 \neq 0$. Define  $\vec u_{\ell_0}$ as in~\eqref{ul def}, \eqref{ul t def}. Then, $\vec u_{\ell_0} \equiv (0, 0)$, i.e., 
$\vec \psi(0) = ( \fy_{\ell_0}, 0)$ where $\fy_{\ell_0}$ is given by Proposition~\ref{ode prop} and therefore $\vec \psi(0) \not \in \E_0$. 
\end{lem}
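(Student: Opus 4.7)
The strategy is to adapt the entire Case~1 machinery (Lemma~\ref{project ineq}, Claim~\ref{comp supp}, Lemma~\ref{ell=0 lem}) to the linearized equation~\eqref{ul eq}, exploiting the fact that the asymptotics~\eqref{ul limits} coincide with those produced in the $\ell_0 = 0$ case and that the trajectory $\ti K$ retains the exterior energy vanishing property because $\fy_{\ell_0}$ is stationary.

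First I would establish an analog of Lemma~\ref{project ineq} for $\vec u_{\ell_0}$: there exists $R_0 > 0$ such that for all $R \ge R_0$ and all $t \in \R$,
\[
\|\pi_R^\perp \vec u_{\ell_0}(t)\|_{\dot H^1 \times L^2(r \ge R)} \lesssim R^{-1} \|\pi_R \vec u_{\ell_0}(t)\|_{\dot H^1 \times L^2(r \ge R)}^3.
\]
Following the proof of Lemma~\ref{project ineq}, one truncates $\vec u_{\ell_0}(0)$ to be constant inside $r = R$, and solves the associated truncated version of~\eqref{ul eq}, i.e., with $\chi_R$ inserted in front of each of $V_1, V_2, \NN_1, \NN_2$. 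A modified small-data well-posedness result analogous to Lemma~\ref{h small data} applies, since by Proposition~\ref{ode prop} one has $\fy_{\ell_0}(r) = \ell_0 r^{-2} + O(r^{-6})$ at infinity, which via~\eqref{nonlin size} yields the pointwise bounds $|\chi_R V_1| \lesssim R^{-6}$ and $|\chi_R V_2| \lesssim R^{-12}$ on the exterior region. Hence the new linear terms $\chi_R V_i u_{\ell_0}$ contribute $O(R^{-6})\|\vec u_{\ell_0}(0)\|_{\dot H^1 \times L^2}$ to the Duhamel difference between the nonlinear and free evolutions, which is subordinate to the $R^{-1}\|\cdot\|^3$ term from $\NN_1$ handled exactly as in Lemma~\ref{h small data}; the $\NN_2$ contribution is similarly controlled via the Strauss bound~\eqref{Strauss}. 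Applying Proposition~\ref{linear prop} to the free evolution of the truncated data and invoking the exterior energy vanishing
\[
\|\vec u_{\ell_0}(t)\|_{\dot H^1 \times L^2(r \ge R + |t|)} \to 0 \quad \text{as} \quad |t| \to \infty,
\]
followed by absorption of the $\pi_R^\perp$ self-interaction as in Step~1, yields the projection inequality.

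Once this inequality is in hand, the proof proceeds exactly as the proof of Claim~\ref{comp supp}. Setting $v_{\ell_0, 0}(r) := r^3 u_{\ell_0, 0}(r)$ and $v_{\ell_0, 1}(r) := r \int_r^\infty u_{\ell_0, 1}(\rho) \rho \, d\rho$, the asymptotics~\eqref{ul limits} give $|v_{\ell_0, 0}(r)| = O(r^{-4})$ and $|v_{\ell_0, 1}(r)| = O(r^{-2})$, which is precisely the "$\ell_0 = 0$" regime. The difference estimates~\eqref{v0 diff1}--\eqref{v1 diff1} (derived from the projection inequality via~\eqref{v u project}) combined with the iteration scheme of Claim~\ref{comp supp} force $v_{\ell_0, 0}(r_0) = v_{\ell_0, 1}(r_0) = 0$ for some sufficiently large $r_0$, so $(u_{\ell_0, 0}, u_{\ell_0, 1})$ is compactly supported. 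Finally, the contradiction argument of Lemma~\ref{ell=0 lem}—defining $\rho_0$ as the infimum of radii outside which the data vanishes, choosing $\rho_1 \in (\rho_0/2, \rho_0)$ with arbitrarily small exterior energy, and applying the projection inequality once more—concludes $\vec u_{\ell_0} \equiv (0,0)$. Consequently $\vec \psi(0) = (\fy_{\ell_0}, 0)$, which by Proposition~\ref{ode prop} is not in $\E_0$ for $\ell_0 \neq 0$, contradicting $\vec \psi(0) \in \E_0$ (which follows from $\vec u(0) \in \HH$ via~\eqref{finite energy in HH}).

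The main obstacle is the first step: verifying that the small-data/perturbative machinery underlying Lemma~\ref{project ineq} survives when the governing equation has the linearized form~\eqref{ul eq} rather than the pure nonlinear form~\eqref{u eq}. The worry is that the linear potential terms $V_1, V_2$ could propagate non-perturbatively and destroy the cubic right-hand side of the projection inequality. This concern is defused by the strong spatial decay $\fy_{\ell_0} = O(r^{-2})$ on the exterior cone, which renders $\chi_R V_1$ and $\chi_R V_2$ exponentially subcritical relative to the threshold $R^{-1}$ appearing on the right-hand side. Once this is checked, every remaining step is a faithful transcription of the $\ell_0 = 0$ argument applied to $\vec u_{\ell_0}$ in place of $\vec u$.
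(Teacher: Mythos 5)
Your overall strategy is the same as the paper's: linearize about $\fy_{\ell_0}$, set up a truncated exterior Cauchy problem, derive a projection inequality, and rerun the compact-support and unique-continuation steps. But the step you yourself identify as the main obstacle --- handling the potential terms $V_1, V_2$ --- is resolved incorrectly, in two ways.

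First, the claim that the linear contribution $O(R^{-6})\|\vec u_{\ell_0}(0)\|_{\dot H^1\times L^2}$ is ``subordinate to the $R^{-1}\|\cdot\|^3$ term'' is false: in the small-data regime the linear term \emph{dominates} the cubic one. Indeed, from \eqref{ul limits} one has $\|\pi_R\vec u_{\ell_0}(0)\|_{\dot H^1\times L^2(r\ge R)}\sim R^{-5/2}$, so a linear term $R^{-4}\|\pi_R\vec u_{\ell_0}\|\sim R^{-13/2}$ is far larger than $R^{-1}\|\pi_R\vec u_{\ell_0}\|^3\sim R^{-17/2}$. Consequently the pure cubic projection inequality you state cannot be proved; the correct statement (the paper's Lemma~\ref{project ineq mod}) necessarily contains linear, quadratic, quartic and quintic terms in $\|\pi_R\vec u_{\ell_0}\|$ with various negative powers of $R$. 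The argument still closes, but only because those extra powers of $R^{-1}$, combined with the a priori decay $|\vec v_{\ell_0}(R)|=O(R^{-2})$, reduce everything to a difference estimate of the form $|\vec v_{\ell_0}(r)-\vec v_{\ell_0}(r')|\lesssim r^{-4}|\vec v_{\ell_0}(r)|$, which is \emph{linear} in $|\vec v_{\ell_0}|$ rather than cubic; you cannot literally reuse \eqref{v0 diff1}--\eqref{v1 diff1} and Claim~\ref{comp supp} as written, and you have not carried the linear term through this bookkeeping.

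Second, inserting only the spatial cutoff $\chi_R$ in front of $V_1,V_2$ does not yield a usable global-in-time perturbative theory. The resulting potential $\chi_R(r)V_i(r,\fy_{\ell_0})$ is time-independent, so every space-time norm of it over $t\in\R$ that is dual to a Strichartz norm (e.g.\ $L^{7/4}_tL^{7/2}_x$) is infinite, and the Duhamel estimate for $\sup_{t\in\R}\|\vec w(t)-\vec w_L(t)\|_{\dot H^1\times L^2}$ fails. Since the argument requires letting $|t|\to\infty$ to invoke Proposition~\ref{linear prop} and the exterior energy vanishing, one cannot restrict to bounded time intervals. The paper's fix is to also redefine the potentials \emph{inside} the light cone, replacing $V_i(r,\cdot)$ by its value at $r=R+|t|$ there, which produces the decay $|V_{1,R}(t,r)|\lesssim (R+|t|)^{-6}$ and hence a finite $L^{7/4}_tL^{7/2}_x$ norm of size $R^{-4}$; finite speed of propagation makes this modification harmless on the exterior cone. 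Without this (or an equivalent device), your analog of Lemma~\ref{h small data} does not hold.
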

The argument that we will use to prove Lemma~\ref{ul 0} is very similar to the argument we presented in the previous steps which finished in the case $\ell_0 = 0$. Due to similar nature of the proof we will omit many details here. 

Although we already have established the asymptotic behavior of $\vec u_{\ell_0}$ given in~\eqref{ul limits}, we recall that the driving force behind the entire argument in the previous steps was the estimate~\eqref{key ineq}, which gave a quantitative restriction on how close $\vec u(0)$ had to be to the plane $P(R)$. We will prove a similar estimate here for $\vec u_{\ell_0}$ and for this we will need to modify the Cauchy problem~\eqref{ul eq} on the interior of the cone $\{r \le R + \abs{t}\}$ for large $R$. Similar to the argument in~\cite{KLS} we alter the potential terms and nonlinearity in~\eqref{ul eq}. In particular, for each $R>0$ we define $\fy_{\ell_0, R}$ by
\EQ{ 
\fy_{\ell_0,R}(t, r):= \begin{cases} \fy_{\ell_0}(R+ \abs{t}) \mfor 0 \le r \le R + \abs{t}\\ \fy_{\ell_0}(r) \mfor r \ge R+ \abs{t}\end{cases}
}
Next, we set  
\begin{align*} 
&V_{1,R}(t, r):= \begin{cases}  V_{1}(R+\abs{t},\fy_{\ell_0, R})\mfor 0 \le r \le R+ \abs{t} \\  V_{1}(r, \fy_{ \ell_0, R}) \mfor r \ge R+ \abs{t} \end{cases}\\ 
&V_{2,R}(t, r):= \begin{cases}  V_{2}(R+\abs{t}, \fy_{\ell_0, R})\mfor 0 \le r \le R+ \abs{t} \\  V_{2}(r, \fy_{ \ell_0, R}) \mfor r \ge R+ \abs{t} \end{cases}
\end{align*}
as well as 
\begin{align*}
&\NN_{1,R}(t, r, w):= \begin{cases}  \NN_{1}(R+\abs{t}, \fy_{\ell_0, R}, w)\mfor 0 \le r \le R+ \abs{t} \\  \NN_{1}(r, \fy_{\ell_0, R}, w) \mfor r \ge R+ \abs{t} \end{cases}\\ 
&\NN_{2,R}(t, r, w):= \begin{cases}  \NN_{2}(R+ \abs{t}, \fy_{\ell_0, R}, w)\mfor 0 \le r \le R+ \abs{t} \\  \NN_{2}(r, \fy_{\ell_0, R}, w) \mfor r \ge R+ \abs{t} \end{cases}\\ 
\end{align*}
Note that for $R$ large enough we have, using \eqref{fy at inf} and \eqref{nonlin size}, that 
\begin{align} \label{V1R}
&\abs{V_{1, R}(t, r)} \lesssim \begin{cases} (R+\abs{t})^{-6} \mfor 0 \le r \le R + \abs{t}\\ r^{-6} \mfor r \ge R + \abs{t}\end{cases}\\
&\abs{V_{2, R}(t, r)} \lesssim \begin{cases} (R+\abs{t})^{-12} \mfor 0 \le r \le R + \abs{t}\\ r^{-12} \mfor r \ge R + \abs{t}\end{cases} \label{V2R}
\\ \label{N1R}
&\abs{\NN_{1, R}(t, r, w)} \lesssim \begin{cases} (R+\abs{t})^{-3}\abs{ w}^2 + \abs{w}^3\mfor 0 \le r \le R + \abs{t}\\ r^{-3}\abs{w}^2 + \abs{w}^3 \mfor r \ge R + \abs{t}\end{cases}
\\ \label{N2R}
&\abs{\NN_{2, R}(t, r, w)} \lesssim \begin{cases} (R+\abs{t})^{-9}\abs{ w}^2 + (R+\abs{t})^{-6}\abs{ w}^3\\+ (R+\abs{t})^{-3}\abs{ w}^4 + \abs{ w}^5 \mfor 0 \le r \le R + \abs{t}\\ r^{-9}\abs{w}^2 +r^{-6}\abs{w}^3+ r^{-3}\abs{w}^4+  \abs{w}^5 \\\mfor r \ge R + \abs{t}\end{cases}
\end{align}
We consider a modified Cauchy problem in $\R^{1+5}$. As in the set-up for Lemma~\ref{h small data} we fix a smooth function $\chi \in C^{\infty}([0, \infty))$ where $\chi(r) = 1$ for $r \ge 1$ and $\chi(r) = 0$ on $r \le 1/2$. Then set $\chi_R(r):= \chi(r/R)$ and for each $R>0$ we consider the modified Cauchy problem: 
\begin{align} \label{w eq mod}
&w_{tt}- w_{rr} - \frac{4}{r} w_r =\M_R(t, r,w) \\ \notag
&\M_R(t, r, w) := - \chi_RV_{1,R}(t, r) w - \chi_RV_{2,R}(t, r) w + \chi_R\NN_{1, R}(t, r, w)+ \chi_R\NN_2(t, r, w)\\
& \vec w(0) = (w_0, w_1) \in \dot{H}^1 \times L^2(\R^5) \notag
\end{align}
The point here is that we have introduced additional decay and in particular, time integrability into the potential terms which will allow these to be treated perturbatively in the small data theory given in the following lemma. We have also introduced the cut-off $\chi_R$ which removes the supercritical nature of the nonlinearity. This will allow us to treat the right-hand-side perturbatively in the energy space. This is an analog of Lemma~\ref{h small data} where here we have linearized about a nontrivial elliptic solution $\fy_{\ell_0}$. First we recall the definition of the norm $Z(I)$ from~\eqref{Z norm def}: 
\ant{
\|\vec w\|_{Z(I)} = \|w\|_{L^{\frac{7}{3}}_t(I; L^{\frac{14}{3}}_x(\R^5))} + \|\vec w(t) \|_{L^{\infty}_t(I; \dot H^1 \times L^2)}
}
\begin{lem}\label{mod cp} There exists $R_2>0$ and there exists $\de_2>0$ small enough so that for all $R>R_2$ and all  initial data $\vec w(0) = (w_0, w_1) \in \dot{H}^1 \times L^2(\R^5)$ with 
\ant{
\|\vec w(0) \|_{\dot{H}^1 \times L^2(\R^5)}   < \de_2
} 
there exists a unique global solution $\vec w(t) \in \dot{H}^1 \times L^2$ to \eqref{w eq mod}. In addition $\vec w(t) $ satisfies 
\EQ{ \label{mod a priori}
\|w\|_{Z(\R))} \lesssim \|\vec w(0)\|_{\dot{H}^1 \times L^2(\R^5)} \lesssim \de_2
}
Moreover, if we denote the free evolution with the same data by $h_L(t):=S(t) \vec h(0)\in \dot{H}^1 \times L^2(\R^5)$, then we have 
\EQ{\label{w lin nl mod}
\sup_{t \in \R}\|\vec w(t) - \vec w_L(t)\|_{\dot{H}^1 \times L^2}& \lesssim R^{-4}\|\vec w(0)\|_{\dot{H}^1 \times L^2} + R^{-5/2}\|\vec w(0)\|_{\dot{H}^1 \times L^2}^2 \\
& \quad+R^{-1} \|\vec w(0)\|_{\dot{H}^1 \times L^2}^3 + R^{-11/2}\|\vec w(0)\|_{\dot{H}^1 \times L^2}^4 \\
&\quad+ R^{-4} \|\vec w(0)\|_{\dot{H}^1 \times L^2}^5
}
\end{lem}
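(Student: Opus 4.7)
The proof follows the template of Lemma~\ref{h small data}, working in the Strichartz space $Z(\R)$. By the free $5$d Strichartz estimate (Proposition~\ref{strich}, applied to the admissible pair $(7/3,14/3,1)$)
\[
\|v\|_{L^{7/3}_t L^{14/3}_x} + \|\vec v\|_{L^\infty_t(\dot H^1\times L^2)} \lesssim \|\vec v(0)\|_{\dot H^1\times L^2} + \|F\|_{L^1_t L^2_x},
\]
everything reduces to bounding $\|\M_R(\cdot,\cdot,w)\|_{L^1_t L^2_x}$ term by term in terms of $\|w\|_{Z(\R)}$, with explicit powers of $R^{-1}$ matching the five terms on the right-hand side of~\eqref{w lin nl mod}. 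Throughout I exploit two facts: that the cutoff $\chi_R$ localizes everything to $r\ge R/2$, and the radial Strauss embedding~\eqref{Strauss}, $|w(r)|\lesssim r^{-3/2}\|w\|_{\dot H^1}$.

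The linear potential $\chi_R V_{1,R} w$ is handled using~\eqref{V1R} and Strauss: after splitting at $r=R+|t|$ and using $w^2\lesssim r^{-3}\|w\|_{\dot H^1}^2$, both the interior integral $(R+|t|)^{-12}\int_{R/2}^{R+|t|} w^2 r^4\,dr$ and the exterior $\int_{R+|t|}^\infty r^{-12}w^2 r^4\,dr$ yield $\|\chi_R V_{1,R}w\|_{L^2_x}\lesssim (R+|t|)^{-5}\|w\|_{\dot H^1}$, which integrates in $t$ to $R^{-4}\|w\|_{L^\infty_t\dot H^1}$. The $V_{2,R}$ contribution, by the same computation with \eqref{V2R}, is the harmless $R^{-10}\|w\|_{L^\infty_t\dot H^1}$. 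The purely-nonlinear $\chi_R w^3$ and $\chi_R w^5$ pieces are estimated exactly as in Lemma~\ref{h small data}, producing $R^{-1}\|w\|_Z^3$ and $R^{-4}\|w\|_Z^5$ respectively. The genuinely new work is in the mixed $r^{-3}w^2$ and $r^{-3}w^4$ pieces arising from \eqref{N1R}, \eqref{N2R} through the linearization about $\fy_{\ell_0}$. For these I again split at $r=R+|t|$, use $w^2\lesssim r^{-3}\|w\|_{\dot H^1}^2$ on the exterior, and pull out a single $L^\infty_x$ factor via Strauss on the interior; the remaining powers of $w$ recombine into $\|w\|_{L^{14/3}_x}^{7/3}$ with a leftover $\|w\|_{\dot H^1}^{\alpha}$, and the $r$-weights yield $L^1_t$-norms $R^{-5/2}$ (for the $w^2$ term) and $R^{-11/2}$ (for the $w^4$ term).

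Summing the contributions yields the master estimate
\[
\|\M_R\|_{L^1_t L^2_x} \lesssim R^{-4}\|w\|_Z + R^{-5/2}\|w\|_Z^2 + R^{-1}\|w\|_Z^3 + R^{-11/2}\|w\|_Z^4 + R^{-4}\|w\|_Z^5,
\]
and analogous difference bounds (with an extra $\|w_1-w_2\|_Z$ factor) follow by the same argument. Choosing $R\ge R_2$ large and $\delta_2$ small makes every prefactor $\le\tfrac12$, so the Duhamel map is a contraction on a ball of radius $\sim \delta_2$ in $Z(\R)$; this gives the unique global solution and the a priori bound~\eqref{mod a priori}. Finally~\eqref{w lin nl mod} is immediate: Duhamel gives $\vec w(t)-\vec w_L(t)=\int_0^t S(t-s)(0,\M_R)\,ds$, Strichartz bounds this by $\|\M_R\|_{L^1_t L^2_x}$, and substituting $\|w\|_Z\lesssim \|\vec w(0)\|_{\dot H^1\times L^2}$ into the master estimate reproduces the five terms in~\eqref{w lin nl mod} verbatim. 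The main technical obstacle is the Hölder bookkeeping for the mixed quadratic and quartic pieces: one must choose exponents on each of the two regions so that the $r$-weights extract precisely the sharp $R^{-5/2}$ and $R^{-11/2}$ decay while the remaining $w$-factors reassemble into the scaling-correct product $\|w\|_{\dot H^1}^{\alpha}\|w\|_{L^{14/3}}^{7/3}$ needed to close the estimate in $Z(\R)$.
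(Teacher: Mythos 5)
Your proposal is correct and follows essentially the same route as the paper: Duhamel plus the $L^{7/3}_tL^{14/3}_x$ Strichartz estimate reduce everything to bounding $\|\M_R\|_{L^1_tL^2_x}$, the potential terms are handled by the decay \eqref{V1R}--\eqref{V2R} (the paper uses a space-time H\"older $L^{7/4}_tL^{7/2}_x\times L^{7/3}_tL^{14/3}_x$ where you use Strauss directly, but both yield $R^{-4}$), the pure cubic and quintic pieces are copied from Lemma~\ref{h small data}, and the mixed quadratic and quartic pieces are closed by Strauss plus H\"older exactly as you describe, with the correct exponents $R^{-5/2}$ and $R^{-11/2}$. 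One small slip in your closing sentence: for the quadratic piece the remaining $w$-factors cannot reassemble into $\|w\|_{L^{14/3}_x}^{7/3}$ (the power $2<7/3$); there one must instead pair, say, $\|w\|_{L^{14/3}_x}^{2}\in L^{7/6}_t$ (or $\|w\|_{\dot H^1}\|w\|_{L^{14/3}_x}\in L^{7/3}_t$) with the time-decaying weight placed in the dual Lebesgue exponent in $t$, which is consistent with your earlier remark that the weights supply the $L^1_t$ integrability.
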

\begin{proof} The proof is very similar to the proof of Lemma~\ref{h small data} and we give a sketch here. The small data well-posedness theory, including the estimate~\eqref{mod a priori} follows from the usual contraction mapping and continuity arguments based on Strichartz estimates. To prove~\eqref{w lin nl mod} we note that by Duhamel and Strichartz we have 
\ant{
\|\vec w(t) -  \vec w_L(t) \|_{\dot{H}^1 \times L^2} \lesssim \|\M_R( \cdot, \cdot, w)\|_{L^1_tL^2_x(\R^{1+5})}
}
Hence it suffices to control the right-hand-side above by the right-hand-side of~\eqref{w lin nl mod}. 
For the linear-in-$w$ terms we have 
\ant{
\| (V_{1,R} + V_{2,R}) \chi_R w\|_{L^{1}_tL^2_x} \lesssim \|(V_{1,R} + V_{2,R})\|_{L^{\frac{7}{4}}L^{\frac{7}{2}}} \|  \chi_R w\|_{L^{\frac{7}{3}}L^{\frac{14}{3}}}
}
Next, combining~\eqref{V1R} and  \eqref{V2R} we can estimate
\ant{
\| (V_{1,R} + V_{2,R}) \chi_R w\|_{L^{\frac{7}{2}}_x}  &\lesssim  \left( \int_0^{R+ \abs{t}}(R+\abs{t})^{-21} \, r^4\, dr  + \int_{R+ \abs{t}}^{\infty} r^{-21} \, r^4\, dr \right)^{\frac{2}{7}}\\
& \lesssim (R+ \abs{t})^{-\frac{32}{7}}
}
Hence, 
\ant{
\|(V_{1,R} + V_{2,R})\|_{L^{\frac{7}{4}}L^{\frac{7}{2}}} &\lesssim \left( \int_{\R}(R+ \abs{t})^{-8} \, dt \right)^{\frac{4}{7}} \lesssim R^{-4}
}
The quadratic-in-$w$ terms are handled similarly. Finally, for the cubic, quartic, and quintic terms in the nonlinearity, one also has to use the Strauss estimate~\eqref{Strauss} together with the point wise estimates~\eqref{N1R} and~\eqref{N2R} and argue as the proof of Lemma~\ref{h small data}. For the cubic and quintic terms the proof is identical to the argument in Lemma~\ref{h small data} and we see the identical $R^{-1}$ and $R^{-4}$ decay in~\eqref{w lin nl mod} as we obtained~\eqref{h-hL}.  To estimate the quartic term in the nonlinearity we use~\eqref{N2R} to argue as follows: 
\ant{
\left\|((R+\abs{t})^{-3}1_{\{r \le R+\abs{t}\}} + r^{-3}1_{\{r \ge R+\abs{t}\}}) \chi_R \abs{w}^4\right\|_{L^1_tL^2_x} & \lesssim R^{-3} \|\chi_R \abs{w}^4\|_{L^1_tL^2_x}.
}
Now, we again argue as in the proof of Lemma~\ref{h small data}, using the Strauss estimate,~\eqref{Strauss},  to obtain
\ant{
R^{-3}\| \chi_R \abs{w}^4\|_{L^2}& \lesssim \sup_{r \ge R}\abs{w(t, r)}^{\frac{5}{3}} \|w(t)\|_{L^{\frac{14}{3}}}^{\frac{7}{3}} \\
& \lesssim R^{-11/2} \|w(t)\|_{\dot{H}^1}^{\frac{5}{3}} \|w(t)\|_{L^{\frac{14}{3}}}^{\frac{7}{3}}.
}
Integrating in time we can then conclude 
\ant{
\|R^{-3}\| \chi_R \abs{w}^4\|_{L^2_x}\|_{L^1_t} \lesssim R^{-11/2} \|w\|_{Z(\R)}^4 \lesssim R^{-11/2} \|\vec w(0)\|_{\dot{H}^1\times L^2}^4,
}
where the last estimate follows from~\eqref{mod a priori}. 
\end{proof}
With Lemma~\ref{mod cp} we can now argue exactly as in the proof of Lemma~\ref{project ineq} to establish: 
\begin{lem} \label{project ineq mod}There exists $R_2>0$ so that for all $R>R_2$ and for all $t \in \R$ we have 
\ant{ 
\| \pi^{\perp}_R \vec u_{\ell_0}(t) \|_{\dot{H}^1 \times L^2(r \ge R)} &\lesssim R^{-4} \| \pi_R \vec u_{\ell_0}(t)\|_{\dot{H}^1 \times L^2(r \ge R)} +R^{-\frac{5}{2}} \| \pi_R \vec u_{\ell_0}(t)\|_{\dot{H}^1 \times L^2(r \ge R)}^2\\
&+  R^{-1} \| \pi_R \vec u_{\ell_0}(t)\|_{\dot{H}^1 \times L^2(r \ge R)}^3 +  R^{-11/2} \| \pi_R \vec u_{\ell_0}(t)\|_{\dot{H}^1 \times L^2(r \ge R)}^4\\
& +  R^{-4} \| \pi_R \vec u_{\ell_0}(t)\|_{\dot{H}^1 \times L^2(r \ge R)}^5  
}
where $P(R):= \{(c_1r^{-3}, c_2r^{-3}) \mid, c_1, c_2 \in \R\}$, $\pi_R$ denotes orthogonal projection onto $P(R)$ and $\pi_R^{\perp}$ denotes orthogonal projection onto the orthogonal complement of the plane $P(R)$ in the space $\dot{H}^1 \times L^2(r>R)(\R^5)$. We remark that the constant above is uniform in $t \in \R$. 
\end{lem}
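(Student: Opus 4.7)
The proof follows the same scheme as the proof of Lemma~\ref{project ineq}, with Lemma~\ref{mod cp} playing the role that Lemma~\ref{h small data} played in that earlier argument. We will sketch the main steps and indicate how the five distinct powers of $\| \pi_R \vec u_{\ell_0}(t)\|_{\dot H^1\times L^2(r\ge R)}$ appearing on the right-hand side arise from the five terms on the right-hand side of~\eqref{w lin nl mod}.

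First, using the pre-compactness of the trajectory $\ti K = \{\vec u_{\ell_0}(t) \mid t\in\R\}$ in $\dot H^1 \times L^2$, choose $R_2>0$ large enough (and larger than the threshold $R_2$ from Lemma~\ref{mod cp}) so that for all $R\ge R_2$ and all $t\in\R$ we have
\EQ{\nn
 \|\vec u_{\ell_0}(t)\|_{\dot H^1\times L^2(r\ge R)} \le \de_2,
}
where $\de_2$ is the small-data threshold from Lemma~\ref{mod cp}. Fix such an $R$ and a time $t_0 \in \R$. Define truncated data $\vec u_{\ell_0, R, t_0}(0) = (w_{0,R,t_0}, w_{1,R,t_0})$ by
\EQ{\nn
 w_{0,R,t_0}(r) := \begin{cases} u_{\ell_0}(t_0, r) & r\ge R \\ u_{\ell_0}(t_0, R) & 0\le r\le R \end{cases},\quad
 w_{1,R,t_0}(r) := \begin{cases} \p_t u_{\ell_0}(t_0, r) & r \ge R \\ 0 & 0\le r \le R \end{cases},
}
so that $\|\vec u_{\ell_0, R, t_0}(0)\|_{\dot H^1 \times L^2(\R^5)} = \|\vec u_{\ell_0}(t_0)\|_{\dot H^1 \times L^2(r\ge R)} \le \de_2$. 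By Lemma~\ref{mod cp}, there is a unique global solution $\vec w(t)$ to~\eqref{w eq mod} (with initial time shifted to $t_0$) with this data, satisfying the a priori bound~\eqref{mod a priori} and the nonlinear-versus-linear comparison~\eqref{w lin nl mod}.

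The next step is to observe that by construction of the cutoffs $\chi_R$, $\fy_{\ell_0, R}$, $V_{i,R}$, $\NN_{i,R}$, the modified equation~\eqref{w eq mod} agrees pointwise with~\eqref{ul eq} on the cone $\CC_{R, t_0}:=\{(t,r) : r \ge R + |t - t_0|\}$. By finite speed of propagation we therefore have $\vec w(t, r) = \vec u_{\ell_0}(t, r)$ on $\CC_{R, t_0}$. Let $\vec w_L(t):=S(t-t_0)\vec w(t_0)$ be the free wave with the same truncated data. By the triangle inequality,
\EQ{\nn
 \|\vec w_L(t)\|_{\dot H^1\times L^2(r\ge R+|t-t_0|)} \le \|\vec u_{\ell_0}(t)\|_{\dot H^1 \times L^2(r\ge R+|t-t_0|)} + \|\vec w(t) - \vec w_L(t)\|_{\dot H^1\times L^2}.
}
Send $t\to\pm\infty$, with the sign chosen to realize the $\limsup$ in Proposition~\ref{linear prop}. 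By the analog of Corollary~\ref{vanish tail} for $\vec u_{\ell_0}$ (which follows from pre-compactness of $\ti K$ in exactly the same way), the first term on the right-hand side vanishes. The nonlinear error is controlled uniformly in $t$ by~\eqref{w lin nl mod}. Proposition~\ref{linear prop} bounds the left-hand side from below by $c\,\|\pi_R^\perp \vec u_{\ell_0}(t_0)\|_{\dot H^1\times L^2(r\ge R)}$ (since the truncated data agrees with $\vec u_{\ell_0}(t_0)$ on $\{r\ge R\}$). Combining,
\EQ{\nn
 \|\pi_R^\perp \vec u_{\ell_0}(t_0)\|_{\dot H^1\times L^2(r\ge R)} \lec \sum_{k\in\{1,2,3,4,5\}} R^{-\beta_k} \|\vec u_{\ell_0}(t_0)\|^k_{\dot H^1\times L^2(r\ge R)},
}
with exponents $(\beta_1,\dots,\beta_5) = (4, 5/2, 1, 11/2, 4)$ read off directly from~\eqref{w lin nl mod}. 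Expanding $\|\vec u_{\ell_0}(t_0)\|^2_{\dot H^1\times L^2(r\ge R)} = \|\pi_R \vec u_{\ell_0}(t_0)\|^2 + \|\pi_R^\perp \vec u_{\ell_0}(t_0)\|^2$ and using that $R^{-\beta_k}\|\vec u_{\ell_0}(t_0)\|^{k-1}_{\dot H^1\times L^2(r\ge R)}$ can be made arbitrarily small by enlarging $R_2$ (since the $\dot H^1\times L^2(r\ge R)$ norms are uniformly small), we absorb the $\pi_R^\perp$ contributions on the right-hand side into the left-hand side, leaving only the desired powers of $\|\pi_R \vec u_{\ell_0}(t_0)\|_{\dot H^1\times L^2(r\ge R)}$.

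The main technical obstacle, beyond tracking the various powers of $R^{-1}$, is ensuring that finite speed of propagation and the coincidence of~\eqref{w eq mod} and~\eqref{ul eq} on $\CC_R$ are both correctly enforced by the cutoffs in the definitions of $V_{i,R}, \NN_{i,R}$ and $\chi_R$; once this is in place, the absorption step is routine given that $R$ can be chosen uniformly in $t_0 \in \R$ by pre-compactness. Since the argument at a general $t_0$ is identical to the argument at $t_0 = 0$ (with $R_2$ independent of $t_0$), the claimed estimate holds uniformly in $t\in\R$, completing the proof.
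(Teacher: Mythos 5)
Your proposal is correct and follows essentially the same route as the paper, which establishes Lemma~\ref{project ineq mod} by repeating the proof of Lemma~\ref{project ineq} verbatim with Lemma~\ref{mod cp} substituted for Lemma~\ref{h small data}: truncate the data at radius $R$, use finite speed of propagation to identify the modified flow with $\vec u_{\ell_0}$ on the exterior cone, apply Proposition~\ref{linear prop} together with the vanishing of the exterior energy, and absorb the $\pi_R^\perp$ contributions. The bookkeeping of the five powers of $R^{-1}$ from~\eqref{w lin nl mod} and the uniformity in $t_0$ via pre-compactness are handled exactly as the paper intends.
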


The next step in the proof of Lemma~\ref{ul 0} is to prove that $(\p_r  u_{\ell_0, 0}, u_{\ell_0, 1})$ is compactly supported. 
\begin{claim}\label{comp supp claim} Let $\vec u_{\ell_0}$ be as in~\eqref{ul def}. Then $(\p_r  u_{\ell_0, 0}, u_{\ell_0, 1})$ must be compactly supported.
\end{claim}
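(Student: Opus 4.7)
The plan is to mimic Claim~\ref{comp supp}, with Lemma~\ref{project ineq mod} playing the role of the cubic inequality \eqref{key ineq}. The crucial new feature is that Lemma~\ref{project ineq mod} is \emph{linear} to leading order in $\|\pi_R\vec u_{\ell_0}\|$, so it will produce exponentially-convergent rather than cubically-convergent difference estimates. Mirroring \eqref{v def}, set
\ant{
v_{\ell_0, 0}(r) := r^3 u_{\ell_0, 0}(r), \quad v_{\ell_0, 1}(r) := r \int_r^{\infty} u_{\ell_0, 1}(\rho)\,\rho\, d\rho,
}
so that the identities \eqref{v u project} apply verbatim to $\vec u_{\ell_0}(0)$, and the asymptotics~\eqref{ul limits} give $v_{\ell_0, 0}(r) = O(r^{-4})$ and $v_{\ell_0, 1}(r) = O(r^{-2})$, whence $\|\pi_R \vec u_{\ell_0}(0)\|_{\dot H^1 \times L^2(r\ge R)} = O(R^{-5/2})$.

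The first observation is that this smallness allows the linear term in Lemma~\ref{project ineq mod} to absorb all the higher-order corrections: each one carries an extra factor $\|\pi_R\vec u_{\ell_0}\|^{k-1} = O(R^{-5(k-1)/2})$, so for all $R$ large enough
\ant{
\|\pi_R^\perp \vec u_{\ell_0}(0)\|_{\dot H^1\times L^2(r\ge R)} \lesssim R^{-4}\|\pi_R \vec u_{\ell_0}(0)\|_{\dot H^1\times L^2(r\ge R)}.
}
Substituting this into the Cauchy--Schwarz computation of Corollary~\ref{diff esti} (with the weights dictated by \eqref{v u project}) yields, for $R \le r \le r' \le 2R$,
\ant{
|v_{\ell_0,0}(r') - v_{\ell_0,0}(r)| &\lesssim R^{-4}|v_{\ell_0,0}(R)| + R^{-3}|v_{\ell_0,1}(R)|, \\
|v_{\ell_0,1}(r') - v_{\ell_0,1}(r)| &\lesssim R^{-5}|v_{\ell_0,0}(R)| + R^{-4}|v_{\ell_0,1}(R)|.
}

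Setting $R = 2^n r_0$ and summing the two bounds via the triangle inequality produces the multiplicative \emph{lower} bound
\ant{
|v_{\ell_0, 0}(2^{n+1}r_0)| + |v_{\ell_0, 1}(2^{n+1}r_0)| \ge \bigl(1 - C (2^n r_0)^{-3}\bigr)\bigl(|v_{\ell_0, 0}(2^n r_0)| + |v_{\ell_0, 1}(2^n r_0)|\bigr).
}
Since $\sum_n (2^n r_0)^{-3}$ converges, one fixes $r_0$ large enough that $\prod_{n\ge 0}(1 - C(2^n r_0)^{-3}) \ge 1/2$, and induction gives $|v_{\ell_0, 0}(2^n r_0)| + |v_{\ell_0, 1}(2^n r_0)| \ge \tfrac{1}{2}(|v_{\ell_0, 0}(r_0)| + |v_{\ell_0, 1}(r_0)|)$ for every $n$. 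On the other hand, the left-hand side is $O(2^{-2n} r_0^{-2})$ by~\eqref{ul limits}, so letting $n\to\infty$ forces $v_{\ell_0, 0}(r_0) = v_{\ell_0, 1}(r_0) = 0$ for all $r_0$ sufficiently large. This is exactly the statement $\|\pi_{r_0}\vec u_{\ell_0}(0)\|_{\dot H^1\times L^2(r\ge r_0)} = 0$, and a final application of Lemma~\ref{project ineq mod} gives $\|\pi_{r_0}^\perp\vec u_{\ell_0}(0)\|_{\dot H^1\times L^2(r\ge r_0)} = 0$ as well, whence $\vec u_{\ell_0}(0)$ vanishes outside the ball of radius $r_0$. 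The only delicate point is verifying that the sub-leading terms in Lemma~\ref{project ineq mod} really can be absorbed for large $R$, which is where the precise decay rates in \eqref{ul limits} are essential; the remainder is a routine geometric-series induction modeled on Claim~\ref{comp supp}.
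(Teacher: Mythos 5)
Your argument is correct and follows essentially the same route as the paper: both reduce Lemma~\ref{project ineq mod} (using the decay from~\eqref{ul limits} to absorb the higher-order terms into the linear one), convert it via Cauchy--Schwarz into the difference bounds $|\vec v_{\ell_0}(r)-\vec v_{\ell_0}(r')|\lesssim r^{-4}|\vec v_{\ell_0}(r)|$ on dyadic intervals, iterate a multiplicative lower bound along $2^n r_0$, and contrast with the $O(r^{-2})$ decay to force $\vec v_{\ell_0}(r_0)=0$ and hence vanishing of the exterior $\dot H^1\times L^2$ norm. The only cosmetic difference is that you control the iteration by a convergent infinite product while the paper uses a fixed factor $3/4$; both suffice.
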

\begin{proof}[Proof of Claim~\ref{comp supp claim}]
To prove the  claim, we pass to the $\vec v_{\ell_0}$ formulation.  With $(v_{\ell_0, 0}, v_{\ell_0, 1})$ defined as in~\eqref{ul limits} we can  conclude that for all $R>R_2$ large enough we have 
 \ali{\label{vl ineq}
\int_R^{\infty} &\left(\frac{1}{r} \p_r v_{\ell_0, 0}(r)\right)^2 \, dr + \int_R^{\I} (\p_r v_{\ell_0,1}( r))^2 \, dr   
\lesssim  R^{-19}v_{\ell_0,0}^2(R) + R^{-11} v_{\ell_0,0}^4(R)\\
&\quad+ R^{-11}v_{\ell_0,0}^6(R) + R^{-23} v_{\ell, 0}^8(R) +R^{-31} v_{\ell, 0}^{10}(R) 
+R^{-17}v_{\ell_0,1}^2(R)\\
& \quad + R^{-7} v_{\ell_0,1}^4(R) + R^{-5}v_{\ell_0,1}^6( R) +R^{-15}v_{\ell_0, 1}^8(R) + R^{-21} v_{\ell_0, 1}^{10}(R)\\
&\lesssim  R^{-11}(v_{\ell_0, 0}^2(R) + v_{\ell_0, 1}^2(R))
}
where the first inequality follows by rewriting the conclusion of Lemma~\ref{project ineq mod} in terms of $ \vec v_{\ell_0} = (v_{\ell_0, 0}, v_{\ell_0, 1})$ by using~\eqref{v u project},  and the last line following from the known decay estimates in \eqref{ul limits}.  

Next, arguing as in the proof of Corollary~\ref{diff esti}, we can establish difference estimates. Indeed, for all $R_2 \le r \le r' \le 2r$ we obtain
\ali{
&\abs{v_{\ell_0, 0}(r) - v_{\ell_0, 0}(r')} \lesssim r^{-4}(v_{\ell_0, 0}^2(r) + v_{\ell_0, 1}^2(r))^{\frac{1}{2}},\\
&\abs{v_{\ell_0, 1}(r) - v_{\ell_0, 1}(r')} \lesssim r^{-5}(v_{\ell_0, 0}^2(r) + v_{\ell_0, 1}^2(r))^{\frac{1}{2}}.
}
In terms of the vector $\vec v_{\ell_0}  = (v_{\ell_0, 0},  v_{\ell_0, 1})$, 
where
\ant{ 
\abs{\vec v_{\ell_0}(r)}:= (v_{\ell_0, 0}^2(r) + v_{\ell_0, 1}^2(r))^{\frac{1}{2}}
}
we then have 
\ali{
&\abs{\vec v_{\ell_0}(r) - \vec v_{\ell_0}(r')} \lesssim r^{-4}\abs{\vec v_{\ell_0}(r)}.
}
Using the triangle inequality we can then deduce that for fixed $r_0 \ge R_2$ large enough we have 
\ant{
\abs{\vec v_{\ell_0}(2^{n+1}r_0)} \ge \frac{3}{4} \abs{\vec v_{\ell_0}(2^{n}r_0)}.
}
Iterating this, we see that for each $n$, 
\ant{
\abs{\vec v_{\ell_0}(2^{n}r_0)} \ge \left(\frac{3}{4}\right)^{n} \abs{\vec v_{\ell_0}(r_0)}.
}
On the other hand, by~\eqref{ul limits} we have 
\ant{
\abs{ \vec v_{\ell_0}(2^n r_0)} \lesssim (2^n r_0)^{-2}.
}
Putting together the last two lines we get 
\ant{
3^n \abs{ \vec v_{\ell_0}(r_0)} \lesssim 1,
}
which implies that $\vec v_{\ell_0}(r_0) = (0, 0)$. Inserting this into~\eqref{vl ineq} gives
\ant{
\int_{r_0}^{\infty} \left(\frac{1}{r} \p_r v_{\ell_0, 0}(r)\right)^2 \, dr + \int_{r_0}^{\I} (\p_r v_{\ell_0,1}( r))^2 \, dr    = 0.
}
Therefore, 
\begin{multline*}
\| \vec u_{\ell_0}\|_{\dot{H}^1 \times L^2(r \ge r_0)}^2= \\= \int_{r_0}^{\infty} \left(\frac{1}{r} \p_r v_{\ell_0, 0}(r)\right)^2 \, dr + \int_{r_0}^{\I} (\p_r v_{\ell_0,1}( r))^2 \, dr + 3r_0^{-3}v_{\ell_0, 0}^2(r_0) + r_0^{-1} v_{\ell_0, 1}^2(r_0) = 0
\end{multline*}
which means that $(\p_r u_{\ell_0, 0}, u_{\ell_0, 1})$ is compactly supported. 
\end{proof}
We can now finish the proof of Lemma~\ref{ul 0} by showing that $\vec u_{\ell_0} = (0, 0)$.
\begin{proof}[Proof of Lemma~\ref{ul 0}]The proof is nearly identical to the proof of Lemma~\ref{ell=0 lem}. Suppose $$(\p_ru_{\ell_0,0}, u_{\ell_0,1}) \neq (0, 0)$$ and we argue by contradiction.  
By Claim~\ref{comp supp claim},   $(\p_ru_{\ell_0,0}, u_{\ell_0,1})$ is compactly supported. Then we can define
 $\rho_0>0$ by 
 \ant{
 \rho_0 := \inf \{ \rho \, : \,  \|\vec u_{\ell_0}\|_{\dot{H}^1 \times L^2(r \ge \rho)} =  0\}
 }
Let $\e>0$ be a small number to be determined below. Choose $\rho_1 = \rho_1( \e)$ so that 
\begin{align} \label{rho1 def}
&\frac{ 1}{2} \rho_0 < \rho_1 < \rho_0, \mand  \rho_0 - \rho_1 < \e, \\
&0< \| \vec u_{\ell_0}(0)\|_{\dot{H}^1 \times L^2(r \ge \rho_1)}^2  < \de_2^2 \label{rho1 def2}
\end{align}
where $\de_2$ is as in Lemma~\ref{mod cp}. With $(v_0, v_1)$ as in~\eqref{v def} we have 
\EQ{\label{all small l}
3 \rho_1^{-3} v_{\ell_0,0}^2(\rho_1) + \rho_1^{-1}v_{\ell_0, 1}^2(\rho_1)&+ \int_{\rho_1}^\I \left( \frac{1}{r} \p_r v_{\ell_0,0}( r) \right)^2 \, dr + \int_{\rho_1}^\I \left( \p_r v_{\ell_0,1}( r) \right)^2 \, dr  = \\
& = \| \pi_{\rho_1} \vec u_{\ell_0}(0)\|^2_{\dot{H}^1 \times L^2(r \ge \rho_1)}+\| \pi_{\rho_1}^{\perp} \vec u_{\ell_0}(0)\|^2_{\dot{H}^1 \times L^2(r \ge \rho_1)} \\
&= \|  \vec u_{\ell_0}(0)\|^2_{\dot{H}^1 \times L^2(r \ge \rho_1)} < \de^2_2
}
Setting  $R= \rho_1$ in~\eqref{vl ineq} we obtain  
\EQ{\label{vl rho1}
\left( \int_{\rho_1}^\I \left[\left( \frac{1}{r} \p_r v_{\ell_0,0}(r) \right)^2 + \left( \p_r v_{\ell_0,1}( r) \right)^2 \right]\, dr\right)^{\frac{1}{2}} & \lesssim \rho_1^{-\frac{11}{2}}\left( \abs{v_{\ell_0,0}( \rho_1)}^2 + \abs{v_{\ell_0,1}( \rho_1)}^2\right)^{\frac{1}{2}} \\
&\lesssim \rho_0^{-\frac{11}{2}}\left( \abs{v_{\ell_0,0}( \rho_1)}^2 + \abs{v_{\ell_0,1}( \rho_1)}^2\right)^{\frac{1}{2}} 
}
where we have used our assumption that $\frac{1}{2} \rho_0< \rho_1< \rho_0$ in the last line above. Since $v_{\ell_0, 0}( \rho_0) = v_{\ell_0,1}( \rho_0) = 0$ we can argue as in Corollary~\ref{diff esti} and Corollary~\ref{diff esti 2} to obtain
\ant{
&\abs{v_{\ell_0,0}( \rho_1)}^2 = \abs{v_{\ell_0,0}(\rho_1) - v_{\ell_0,0}( \rho_0)}^2 \le C_1\, \e^{3}\, (\abs{v_{\ell_0,0}( \rho_1)}^2 + \abs{v_{\ell_0,1}( \rho_1)}^2)\\
&\abs{v_{\ell_0,1}( \rho_1)}^2 = \abs{v_{\ell_0,1}(\rho_1) - v_{\ell_0,1}( \rho_0)}^2 \le C_2\, \e\, (\abs{v_{\ell_0,0}( \rho_1)}^2 + \abs{v_{\ell_0,1}( \rho_1)}^2)
}
where 
the constants $C_1, C_2$ depend only $\rho_0$ which is fixed, and the uniform constant in~\eqref{vl rho1}, but not on~$\e$. 
Putting the above estimates together yields
\EQ{
(\abs{v_{\ell_0,0}( \rho_1)}^2 + \abs{v_{\ell_0,1}( \rho_1)}^2) \le C_3 \e (\abs{v_{\ell_0,0}( \rho_1)}^2 + \abs{v_{\ell_0,1}( \rho_1)}^2),
}
which implies that $\abs{v_{\ell_0,0}( \rho_1)} = \abs{v_{\ell_0,1}( \rho_1)} = 0$ by taking $\e>0$ small enough. By~\eqref{vl rho1} and the equalities in~\eqref{all small l} we can deduce that 
\ant{
\| \vec u_{\ell_0}(0)\|_{\dot{H}^1 \times L^2(r \ge \rho_1)} = 0,
}
which contradicts the definition of $\rho_0$ since $\rho_1< \rho_0$. Therefore, $(\p_ru_{\ell_0,0}, u_{\ell_0,1}) =(0, 0)$ Since $u_{\ell_0}(r) \to 0$ as $r \to \infty$ we can also conclude that $(u_{\ell_0,0}, u_{\ell_0,1}) = (0, 0)$.
\end{proof}

\subsection{Proof of Proposition~\ref{rigid} and Proof of Theorem~\ref{main}}
 For clarity, we summarize what we have done in the proof of Proposition~\ref{rigid}. 
\begin{proof}[Proof of Proposition~\ref{rigid}]
Let $\vec u(t)$ be a solution to~\eqref{u eq} and suppose that the trajectory 
\ant{
K=\{ \vec u(t) \mid t \in \R\}
}
is pre-compact in $ \HH$. We recall that 
\ant{
r\vec  u(t, r) = \vec \psi(t, r) 
}
where $\vec \psi(t) \in \E_0$ is a solution to,~\eqref{an eq}. 
By Lemma~\ref{spacial decay} there exists $\ell_0 \in \R$ so that 
\begin{align} 
&\abs{r^3 u_0(r) - \ell_0 } = O(r^{-4}) \mas r \to \infty\\
&\abs{r \int_r^{\infty} u_1( \rho) \rho \, d\rho} = O(r^{-2}) \mas r \to \infty 
\end{align}
If $\ell_0 \neq 0$ then by Lemma~\ref{ul 0}, $\vec \psi(0) = (\psi_0, \psi_1) = (\fy_{\ell_0}(r), 0)$ where $\fy_{\ell_0}$ is the nonzero solution to~\eqref{ode} given by Proposition~\ref{ode prop} with $\al = \ell_0$. However, this is impossible since $\fy_{\ell_0} \not \in \E_0$, while on the other hand we know that  $\vec u(0) \in \HH$ implies $\vec \psi \in \E_0$ by~\eqref{finite energy in HH} and~\eqref{HH to E0}. 

Hence, $\ell_0 = 0$.  Then by Lemma~\ref{ell=0 lem} we can conclude that $\vec u(0)= (0, 0)$, which proves Proposition~\ref{rigidity}. 
\end{proof}

The proof of Theorem~\ref{main} is now complete. We conclude by summarizing the argument. 
\begin{proof}[Proof of Theorem~\ref{u main} and hence of Theorem~\ref{main}]
Suppose that Theorem~\ref{u main} fails. Then by Corollary~\ref{global crit} there exists a critical element, that is, a nonzero solution $\vec u_{\infty}(t) \in \HH$ to~\eqref{u eq} such that the trajectory $K= \{ \vec u_{\infty}(t) \mid t \in \R\}$ is pre-compact in $\HH$. However, Proposition~\ref{rigid} implies that any such solution is necessarily identically equal to $(0, 0)$, which contradicts the fact that the critical element $\vec u_{\infty}(t)$ is by construction, nonzero. 
\end{proof}

\bibliographystyle{plain}
\bibliography{researchbib}

 \bigskip

\centerline{\scshape Andrew Lawrie}
\medskip
{\footnotesize
 \centerline{Department of Mathematics, The University of California, Berkeley}
\centerline{859 Evans Hall \#3840, Berkeley, CA 94720, U.S.A.}
\centerline{\email{ alawrie@math.berkeley.edu}}
} 

\end{document}